\title{Finiteness of Crystalline Cohomology \\ of Higher Level}
\author{Kazuaki MIYATANI}
\date{\empty}
\def\thefootnote{($\ast$\@alph\c@footnote)}
\begin{document}

\setlength{\abovedisplayskip}{5pt}
\setlength{\belowdisplayskip}{5pt}

\maketitle

\begin{abstract}
    We prove the finiteness of crystalline cohomology of higher level.
    An important ingredient is a ``higher de Rham complex'' and
    a kind of Poincar\'e lemma for it.
\end{abstract}

\section*{Introduction.}

\subsection{Crystalline cohomology of higher level.}

Pierre Berthelot \cite{Berthelot:DMAI} generalized the notion of PD structure,
the most fundamental notion in the crystalline theory,
to that of ``$m$-PD structure (PD structure of level $m$)'' for each natural number $m$.
After replacing the classical PD structure by this notion,
we directly get a level-$m$ version of crystalline site, crystal and other crystalline concepts.
In particular, we can define $m$-crystalline cohomology over very ramified DVR
because a DVR of mixed characteristic $(0,p)$ has an $m$-PD structure on its maximal ideal if the absolute ramification index is not greater than $p^m(p-1)$.

In contrast to the simplicity of this generalization,
we cannot directly apply classical arguments to prove
fundamental properties of the $m$-crystalline cohomology such as base change and finiteness.
The main reason lies in the fact that the crystalline Poincar\'e lemma appears to be difficult
in the level-$m$ situation.
Bernard Le Stum and Adolfo Quir\'os indeed proved the so-called exact crystalline Poicar\'e lemma \cite{LeStum-Quiros:EPLCCHL},
which states that the $m$-crystalline cohomology is calculated by the ``jet complex of order $p^m$'';
unfortunately, this complex is not bounded and its local freeness is yet to be proved
(the latter point will be discussed in the next subsection).
The lack of boundedness and local freeness prevents us from proving as in the classical case
the cohomological boundedness and the base change, which we need for proving, for example, the finiteness.

In spite of this difficulty, we prove in this article the finiteness of the cohomology by using an auxiliary ``de Rham-like'' complex
and corresponding Poincar\'e lemma.
To be more precise, we first define in a local situation a complex that is,
if we ignore the differential maps, isomorphic to the usual de Rham complex;
we call this complex the ``higher de Rham complex.''
We next prove that this is a resolution of a direct sum of finitely many copies
of the structure sheaf.
Thirdly, although the arguments so far are done just locally,
we show in the global situation the cohomological boundedness and the base change theorem.
At last, by using these two properties and the exact crystalline Poincar\'e lemma,
we prove the finiteness of this cohomology.

This article is organized as follows.
Section 1 is devoted to recalling the foundation of $m$-crystalline theory
and doing some differential calculi.
In Section 2 introduced is the higher de Rham complex and the Poincar\'e lemma for it,
and in Section 3 proved is the finiteness of $m$-crystalline cohomology.
Finally, other applications of the higher de Rham complex are included in Section 4.

\subsection{A problem on local freeness of the jet complex.}
\label{ss:problem_on_freeness}

In fact, if $(S,\fa,\fb,\gamma)$ is an $m$-PD scheme (Definition \ref{def:ring}) and if $X$ is a smooth scheme over $S$,
Le Stum and Quir\'os \cite[1.4]{LeStum-Quiros:EPLCCHL} give a proof of the local freeness of each term appearing in  the jet complex of order $p^m$,
whose $r$-th term is there denoted by $\Omega_{X_m}^r$.
However, the proof for $r\geq 3$ is not correct.

Indeed, let $n$ be the dimension of $X$ over $S$, and $t_1,\dots, t_n$ a system of local coordinates.
Then, as proved in the article, there exist, for each $I\in\bN^n$ that satisfies $p^m<|I|\leq 2p^m$, 
two multi-indices $A(I), B(I)$ such that $A(I)+B(I)=I$ and that
\[
\left\{(dt)^U\otimes (dt)^V \mid (U, V) \neq (A(U+V), B(U+V))\right\}
\]
is a basis of $\Omega_{X_m}^2$.
By using this notation, the module $\Omega_{X_m}^3$ is generated by the set
\begin{align*}
& \big\{(dt)^U \otimes (dt)^V\otimes (dt)^W \mid (V, W) \neq (A(V+W), B(V+W)) \text{ and,}\\
& \qquad \text{ if } W \text{ is not } B(I) \text{ for any } I, \text{ then } (U, V) \neq (A(U+V), B(U+V)) \big\}.
\end{align*}
Their relations are given by, for each $(U, V, W)$ such that $(U, V)=(A(U+V), B(U+V))$ and that $W=B(I)$ for some $I$,
\begin{align*}
    & \sum_S \qbinom {U+V}S (dt)^{U+V-S}\otimes (dt)^S\otimes (dt)^W\\
    & \qquad  - \sum_I\qbinom{U+V}{A(I)}\qbinom{I}{A(I)}^{-1}\sum_{\substack{0<T<I\\ T\neq A(I)}}\qbinom {I}T (dt)^{U+V-A(I)}\otimes (dt)^T\otimes
    (dt)^{I-T}=0,
\end{align*}
where the first sum is taken over all $S$ such that $0<S<U+V$ and that $S$ is not equal to $A(I)$
for any $I$ with $W=B(I)$, and where the first sum in the second line moves $I$ such that $W=B(I)$;
three relatons written in that article are not correct.

Then, we have two problems with these relations.
First, it may happen that all the coefficients in this sum are non-unit.
Second, even if one of the coefficients in the first sum is a unit
(this assumption holds, for example, if there exists only one $I$ that satisfies $W=B(I)$),
$I-T$ in the second sum may again be of the form $B(I')$;
at this point, we do not know how to exclude the auxiliary generators by these relations
to prove the local freeness.

\vspace{10pt}

We refer to influences of this problem on other results in their article \cite{LeStum-Quiros:EPLCCHL}.

First, it does not affect the exact Poincar\'e lemma
at the level of $\sO_X$-modules \cite[3.3]{LeStum-Quiros:EPLCCHL},
whose proof does not use the local freeness.
Second, although the proof of the exact Poincar\'e lemma with coefficients \cite[4.7]{LeStum-Quiros:EPLCCHL} fails in general,
it remains valid if the coefficient $E$ is a flat $m$-crystal since then the assertion is reduced to the case where
$E=\sO^{(m)}_{X/S}$.
Finally, the proof of the Frobenius descent \cite[5.5]{LeStum-Quiros:EPLCCHL} fails because
it uses a proposition \cite[5.2]{LeStum-Quiros:EPLCCHL} depending on the local freeness of each term of the jet complex.
In Subsection \ref{ss:frobenius_descent}, we correct the proof of the Frobenius descent
by using our higher de Rham complex.

\subsection{Conventions.}
\label{ss:conventions}

Throughout this article, we fix a prime number $p$ and a natural number $m$
(natural number means, in this article, non-negative integer).

We assume that $p$ is nilpotent on all schemes appearing in this article.

If $k$, $k'$ and $k''$ denote natural numbers such that $k=k'+k''$, we often use the notation
\[
\binom{k}{k'}:=\frac{k!}{k'!\,k''!},\hspace{1.5em}\mbinom{k}{k'}:=\frac{q!}{q'!\,q''!}\hspace{1em}\mathrm{and}\hspace{1em} \qbinom{k}{k'}:=\binom{k}{k'}\mbinom{k}{k'}^{-1},
\]
where $q$ \resp{$q'$, $q''$} denotes the integer part of $k/p^m$ \resp{$k'/p^m$, $k''/p^m$}.
We also use the usual conventions on multi-indices; if $I=(i_1,\dots,i_n)$ and $J=(j_1,\dots,j_n)$ satisfies $J\leq I$, that is, if $j_k\leq i_k$ for all $k=1,\dots,n$, then we define
\[
\binom{I}{J}:=\prod_{k=1}^n\binom{i_k}{j_k},\hspace{1.5em}\mbinom{I}{J}:=\prod_{k=1}^n\mbinom{i_k}{j_k}
\hspace{1em}\mathrm{and}\hspace{1em}\qbinom{I}{J}:=\prod_{k=1}^n\qbinom{i_k}{j_k}.
\]
The element $(0,\dots,0,1,0,\dots,0)$ in $\bN^n$, where $1$ sits in the $i$-th entry, is denoted by $\one_i$.

\subsection{Acknowledgements.}

The author would like to express his greatest gratitude to Professor Atsushi Shiho for
introducing me to the field of crystalline theory of higher level,
reading thoroughly the draft of this paper, pointing out a lot of mistakes on it and giving him a lot of advice.

The author had a chance to talk with Professor Bernard Le Stum on this topic during his stay in Japan.
The author is grateful to him for the meaningful argument.
The author is also grateful to Professor Pierre Berthelot for his sincere and detailed answer to my questions on the crystalline site of higher level.

\section{Crystalline Site of Higher Level.}

\subsection{$m$-PD structures.}
First, let us recall some basic notions on $m$-PD structures.
The fundamental reference on this subject is Berthelot's article \cite{Berthelot:DMAI}.

\begin{definition}
    \label{def:ring}
Let $R$ be a $\bZ_{(p)}$-algebra and $\fa$ an ideal of $R$.
An {\em $m$-PD structure} on $\fa$ is a PD ideal $(\fb,\gamma)$ of $R$
that satisfies the following two conditions:

(a) $\fa^{(p^m)}+p\fa \subset \fb \subset \fa$;

(b) the PD structure $\gamma$ is compatible with the unique one on $p\bZ_{(p)}$.

Here, $\a^{(p^m)}$ denotes the ideal of $R$ generated by $x^{p^m}$ for all elements $x$ of $\a$.
We call $(\fa,\fb,\gamma)$ an {\em $m$-PD ideal} of $R$, and $(R,\fa,\fb,\gamma)$ an {\em $m$-PD ring}.

An {\em $m$-PD morphism} $(R',\fa',\fb',\gamma')\to(R,\fa,\fb,\gamma)$ between two $m$-PD rings is a PD homomorphism
$(R',\fb',\gamma')\to (R,\fb,\gamma)$ such that the image of $\fa'$ lies in $\fa$.
$\square$
\label{def:m-PD-str}
\end{definition}

\begin{definition}
Let $(R,\a,\b,\gamma)$ be an $m$-PD ring and $A$ an $R$-algebra.
We agree that $\widetilde{\b}$ denotes the ideal $\b+pR$, and that $\widetilde{\gamma}$ denotes
the PD structure on $\widetilde{\b}$ compatible with $\gamma$ and the unique one on $p\bZ_{(p)}$.

(i) We say that the $m$-PD structure $(\b,\gamma)$ {\it extends to} $A$ if the PD structure $\widetilde{\gamma}$ on $\widetilde{\b}$ extends to $A$.

(ii) Let $(I,J,\delta)$ be an $m$-PD ideal of $A$.
We say that the $m$-PD structure $(J,\delta)$ is {\em compatible with} $(\b,\gamma)$ if the following two conditions hold:

\hspace{3pt} (a) the two PD structures $\widetilde{\gamma}$ and $\delta$ are compatible;

\hspace{3pt} (b) $\widetilde{\b}A\cap I$ is a PD sub-ideal of $\widetilde{\b}A$ (equipped with the PD structure extending $\widetilde{\gamma}$).

(iii) Let $(\fa',\fb',\gamma')$ be another $m$-PD ideal of $R$. Then, this is called an {\em $m$-PD sub-ideal} of $(\fa,\fb,\gamma)$ (or simply of $\fa$)
if $\fa'$ is a sub-ideal of $\fa$ and if $(\fb',\gamma')$ is a PD sub-ideal of $(\fb,\gamma)$.
$\square$
\end{definition}

\begin{definition}
Let $(R,\a,\b,\gamma)$ be an $m$-PD ring.
For each natural number $k$ and element $x$ in $\a$, we put
\[
x^{\{k\}}:=x^r \gamma_q(x^{p^m}),
\]
where $k=p^mq+r$ and $0\leq r<p^m$. $\square$
\end{definition}

As is easily seen, this function satisfies $q!x^{\{k\}}=x^k$.
Moreover, there are relations \cite[1.3.6]{Berthelot:DMAI} similar to those of the classical divided power function;
here, we recall here a relation \cite[1.3.6 (iii)]{Berthelot:DMAI} that is used later.

\begin{proposition}\label{prop:pdbinom}
Let $(R,\fa,\fb,\gamma)$ be an $m$-PD ring. Then, for all $x, y\in\fa$ and $k\in\bN$, we have
\[
    (x+y)^{\{k\}} = \sum_{k'=0}^k\qbinom{k}{k'}x^{\{k'\}}y^{\{k-k'\}}.
\]
\end{proposition}

The following proposition, whose proof \cite[1.4.1]{Berthelot:DMAI} we do not recall here,
generalizes the notion of PD envelope.

\begin{proposition-definition}\label{thm:envelope}
Let $(R,\a,\b,\gamma)$ be an $m$-PD ring.
Let $\mathscr{C}_1$ denote the category of the $m$-PD rings over $R$ whose $m$-PD structure is compatible with $(\b,\gamma)$,
and $\mathscr{C}_2$ the category of the pairs $(A,I)$ consisting of an $R$-algebra $A$ and an ideal $I$ of $A$.
Then, the forgetful functor $\mathscr{C}_1\to\mathscr{C}_2$ has a left adjoint functor.
When $(A,I)$ is an object of $\mathscr{C}_2$, its image under this functor is denoted by $(D_{A,\gamma}^{(m)}(I), \overline{I}, \overline{I}_0, {}^{[\,]})$
and is called the {\rm $m$-PD envelope} of $(A,I)$ $($compatible with $(\b,\gamma)$$)$.
\end{proposition-definition}

\subsection{Crystalline site of level $m$.}
\label{ss:site}

All the arguments in the previous subsection are obviously generalized to
the scheme-theoretical situation.

Let $(S,\fa,\fb,\gamma)$ be an $m$-PD scheme, that is,
a datum which consists of a scheme $S$, a quasi-coherent ideal $\fa$ of $\sO_S$ and
a quasi-coherent $m$-PD structure $(\fb, \gamma)$ on $\a$.
Given two $S$-schemes $X$, $Y$ to which the $m$-PD structure $(\fb,\gamma)$ extends, 
and given an immersion $X\hookrightarrow Y$ over $S$,
we let $D_{X,\gamma}^{(m)}(Y)$ denote the $m$-PD envelope of the immersion.

\vspace{3pt}
Now, we fix throughout this subsection an $S$-scheme $X$,
and assume that the $m$-PD structure $(\fb,\gamma)$ extends to $\sO_X$.
We recall the definition of $m$-crystalline site \cite[4]{LeStum-Quiros:TCHL}.

\begin{definition}
\label{def:site}
    (i) Let $U$ be an open subscheme of $X$.
    An {\em $m$-PD thickening} $(U,T,J,\delta)$ of $U$ over $(S,\a,\b,\gamma)$ is a 
    datum which consists of an $S$-scheme $T$, a closed $S$-immersion $U\hookrightarrow T$
    and an $m$-PD structure $(J,\delta)$ on the ideal of $U\hookrightarrow T$ compatible with $(\b,\gamma)$.

(ii) The {\em $m$-crystalline site} $\Crism(X/S,\fa,\fb,\gamma)$, or $\Crism(X/S)$, is the category of the
$m$-PD thickenings $(U,T,J,\delta)$ of an open subscheme $U$ of $X$ over $(S,\fa,\b,\gamma)$,
morphisms defined in an obvious way,
the topology being defined so that a family
$\{(U_{\lambda}, T_{\lambda}, J_{\lambda}, \delta_{\lambda})\to (U, T, J, \delta)\}_{\lambda}$
of morphisms is a covering if and only if $\{T_{\lambda}\to T\}_{\lambda}$ is a covering with respect to the Zariski topology on $T$.
Its associated topos $(X/S,\fa,\fb,\gamma)\crism$, or $(X/S)\crism$, is called the {\em $m$-crystalline topos}.

(iii) The sheaf of rings
\[
(U,T,J,\delta)\mapsto \Gamma(T,\sO_T)
\]
in the topos $(X/S)\crism$ is called the {\em structure sheaf} of the site $\Crism(X/S)$, and denoted by $\sO_{X/(S,\fa,\fb,\gamma)}^{(m)}$ or by $\sO_{X/S}^{(m)}$. $\square$
\end{definition}

For a sheaf $E\in(X/S)^{(m)}_{\cris}$ and an object $(U,T,J,\delta)$ of $\Cris^{(m)}(X/S)$,
we let $E_{(U,T,J,\delta)}$ denote the sheaf on $T$ induced by $E$.
The most frequent situation is $U=T=X$; in this case, we simply write
$E_X$ instead of $E_{(U,T,J,\delta)}$.

The $m$-crystalline topos has the functoriality which generalizes that of crystalline topos.

\begin{proposition}
    \label{prop:functoriality}
Let $u\colon (S',\fa',\fb',\gamma')\to (S,\fa,\fb,\gamma)$ be an $m$-PD morphism from another $m$-PD scheme,
let $X'$ be an $S'$-scheme such that the $m$-PD structure $(\fb', \gamma')$ extends to $\sO_{X'}$,
and let $g\colon X'\to X$ be a morphism over $S$.
The structure morphism of the $S$-scheme $X$ \resp{$S'$-scheme $X'$} is denoted by $f$ \resp{$f'$}.

Then, there exists a morphism of topoi
\[
g\crism\colon (X'/S')\crism\to (X/S)\crism.
\]
\end{proposition}
\vspace{2pt}

We recall here the construction of this morphism of topoi.
We firstly define a functor
\[
    g^{\ast}\colon \Cris^{(m)}(X/S, \fa, \fb, \gamma) \to (X'/S', \fa', \fb', \gamma')_{\cris}^{(m)}
\]
as follows: for an object $(U, T, J, \delta)$ of $\Cris^{(m)}(X/S)$, the sheaf $g^{\ast}(U, T, J, \delta)$ sends
each object $(U', T', J', \delta')$ of $\Cris^{(m)}(X'/S')$ to the set of $m$-PD morphisms $T'\to T$ over $g$;
here, an ``$m$-PD morphism over $g$'' is defined to be a morphism $h\colon T'\to T$ 
that coincides with $g$ on $U'$, that induces a morphism of closed immersions $(U'\hookrightarrow T')\to(U\hookrightarrow T)$ and
that is an $m$-PD morphism with respect to the $m$-PD structures $(J',\delta')$ and $(J,\delta)$.

The functor $g^{\ast}$ is continuous, the topology of the target being the canonical topology;
the classical proof \cite[III 2.2.2]{Berthelot:CCSCP} works verbatim.
This shows that $g^{\ast}$ extends to the functor
\[
    {g_{\cris}^{(m)}}^{\ast}\colon (X/S, \fa, \fb, \gamma)_{\cris}^{(m)}\to (X'/S', \fa', \fb', \gamma')_{\cris}^{(m)}
\]
that admits a right adjoint ${g_{\cris}^{(m)}}_{\ast}$ \cite[III 1.2]{SGA4}.
We may follow the classical argument \cite[III 2.2]{Berthelot:CCSCP} to show that
the functor ${g_{\cris}^{(m)}}^{\ast}$ is left exact, which gives the morphism of topoi
$g_{\cris}^{(m)}=\big({g_{\cris}^{(m)}}^{\ast}, {g_{\cris}^{(m)}}_{\ast}\big)$.
\vspace{3pt}

Before closing this section, we recall another fundamental morphism
\[
    u_{X/S}^{(m)}\colon (X/S)\crism\to X_{\Zar}
\]
of topoi,
which we call the projection of $m$-crystalline topos to the Zariski topos.
For a sheaf $E$ on $X$, we define the sheaf ${u_{X/S}^{(m)}}^{\ast}(E)$ on $\Cris^{(m)}(X/S)$ to be
the sheaf that sends $(U, T, J, \delta)$ to $E(U)$;
we get a functor ${u_{X/S}^{(m)}}^{\ast}\colon X_{\Zar}\to(X/S)\crism$.
This functor is exact and has a right adjoint ${u_{X/S}^{(m)}}_{\ast}\colon (X/S)\crism\to X_{\Zar}$,
described as in the classical case \cite[III 3.2]{Berthelot:CCSCP}.
Now, we get a morphism $u_{X/S}=\big({u_{X/S}^{(m)}}^{\ast}, {u_{X/S}^{(m)}}_{\ast}\big)$ of topoi.

At last, letting $f\colon X\to S$ denote the structure morphism,
we define the morphism
\[
    f_{X/S}^{(m)}\colon (X/S)\crism\to S_{\Zar}
\]
to be the composite of $u_{X/S}^{(m)}$ and the morphism $f_{\Zar}\colon X_{\Zar}\to S_{\Zar}$.

\subsection{Crystals, differential operators and stratifications.}

In this subsection, we discuss the notion of $m$-crystal, hyper $m$-PD differential operator and hyper $m$-PD stratification.
Let $(S,\fa,\fb,\gamma)$ be an $m$-PD scheme, and $X$ an $S$-scheme such that
the $m$-PD structure $(\fb,\gamma)$ extends to $\sO_X$.
Recall that $p$ is nilpotent on $S$ as we have assumed in Subsection \ref{ss:conventions}.

\begin{definition}
Let $E$ be an $\sO_{X/S}^{(m)}$-module in $(X/S)\crism$.
Then, $E$ is called an {\em $m$-crystal} in $\sO_{X/S}^{(m)}$-modules
if for all morphism $f\colon (U,T,J,\delta)\to (U',T',J',\delta')$
of $\Crism(X/S)$, 
the canonical morphism
\[
f^*(E_{(U',T',J',\delta')})\to E_{(U,T,J,\delta)}
\]
is an isomorphism. $\square$
\end{definition}

In this article, $P_{X/S}^{(m)}$ denotes the $m$-PD envelope of the diagonal immersion $X\hookrightarrow X\times_SX$,
and $\sP_{X/S}^{(m)}$ denotes the structure sheaf of $P_{X/S}^{(m)}$.
We always regard $X\times_SX$ and $P_{X/S}^{(m)}$ as schemes over $X$ by the first projection;
the sheaf $\sO_X$, therefore, acts on $\sO_{X\times_SX}=\sO_X\otimes_{\sO_S}\sO_X$ by multiplication to the first factor,
and this also makes $\sP_{X/S}^{(m)}$ an $\sO_X$-algebra.

\begin{definition}
    Let $M$ and $N$ be two $\sO_X$-modules. Then, a {\em hyper $m$-PD differential operator} from $M$ to $N$
    is an $\sO_X$-linear morphism
    \[
        \sP_{X/S}^{(m)}\otimes_{\sO_X} M\to N.
    \]
    $\square$
\end{definition}

\begin{definition}
Let $M$ be an $\sO_X$-module. Then, a {\em hyper $m$-PD stratification} on $M$ is a $\sP_{X/S}^{(m)}$-linear isomorphism
\[
    \sP_{X/S}^{(m)}\otimes_{\sO_X} M\to M\otimes_{\sO_X} \sP_{X/S}^{(m)}
\]
that induces the identity map on $M$ after passing the quotient $\sP_{X/S}^{(m)}\to\sO_{X}$,
and that satisfies the usual cocycle condition. $\square$
\end{definition}

Now, let $(\fa_0,\fb_0,\gamma_0)$ be a quasi-coherent $m$-PD sub-ideal of $\fa$,
let $S_0\hookrightarrow S$ denote the closed immersion defined by $\fa_0$, and
let $i\colon X_0\hookrightarrow X$ denote its base change by $X\to S$.
We assume that $X$ is smooth over $S$.

The first important proposition in this situation is the following one.

\begin{proposition}
\label{thm:lift}
In the situation above, the functor
\[
{i\crism}_*\colon (X_0/S)\crism\to (X/S)\crism
\]
is exact, where ${i\crism}_{\ast}$ is the direct image functor of the morphism $i\crism$ of topoi induced from
$i$ {\rm (}Proposition \ref{prop:functoriality}{\rm )}. Moreover, the image of the structure sheaf $\sO_{X_0/S}^{(m)}$ under this functor is isomorphic to $\sO_{X/S}^{(m)}$,
and the image of an $m$-crystal in $\sO_{X_0/S}^{(m)}$-modules is an $m$-crystal in $\sO_{X/S}^{(m)}$-modules.
\end{proposition}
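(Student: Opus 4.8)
The plan is to describe the pushforward ${i\crism}_*$ completely explicitly by means of a relabelling functor on the underlying sites, and then to read off all three assertions from that description. Concretely, I would first construct a functor
\[
u\colon \Crism(X/S)\to\Crism(X_0/S)
\]
that leaves the thickening scheme $T$ untouched: to an object $(U,T,J,\delta)$ it assigns $(U_0,T,J',\delta')$, where $U_0=U\times_X X_0$, where $J'=\ker(\sO_T\to\sO_{U_0})=J+\fa_0\sO_T$ is the (larger) ideal of $U_0$ in the \emph{same} scheme $T$, and where $\delta'$ is an $m$-PD structure on $J'$ still compatible with $(\fb,\gamma)$. Granting that $u$ is well defined, the functoriality construction underlying the previous proposition, applied to the closed immersion $i$, identifies $i\crism$ with the morphism of topoi attached to $u$, so that for every abelian sheaf $E$ on $\Crism(X_0/S)$ one obtains the formula $({i\crism}_*E)_{(U,T,J,\delta)}=E_{(U_0,T,J',\delta')}$, i.e. ${i\crism}_*E=E\circ u$.

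The heart of the matter is the construction of $\delta'$, which is the one place where the $m$-PD hypotheses are genuinely used. Here I would combine the given $m$-PD structure $\delta$ on $J$ with the $m$-PD structure on $\fa_0\sO_T$ obtained by extending $(\fb_0,\gamma_0)$ along $\sO_S\to\sO_X\to\sO_T$; since $(\fa_0,\fb_0,\gamma_0)$ is an $m$-PD sub-ideal of $(\fa,\fb,\gamma)$ and $\delta$ is compatible with $(\fb,\gamma)$, the two structures are mutually compatible, and $m$-PD algebra should produce a unique $m$-PD structure $\delta'$ on the sum $J'=J+\fa_0\sO_T$ restricting to each of them and again compatible with $(\fb,\gamma)$. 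Checking this compatibility---in particular that $\widetilde{\fb}\,\sO_T\cap J'$ is a PD sub-ideal of $\widetilde{\fb}\,\sO_T$---is the main technical obstacle, since at higher level the functions $x^{\{k\}}$ do not enjoy all the flexibility of the classical divided powers. The smoothness of $X/S$ enters precisely here, through the infinitesimal lifting of thickenings: it guarantees the local lifts needed to make $u$ essentially surjective up to Zariski refinement and thereby to justify the identification ${i\crism}_*E=E\circ u$ and its globalization.

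Once $u$ and this formula are in hand, the first two assertions follow formally. For exactness, the crucial observation is that $u$ does not alter the underlying scheme $T$, and the topology on both sites is the Zariski topology on $T$; hence covering families of $(U,T,J,\delta)$ correspond bijectively, under $u$, to covering families of $(U_0,T,J',\delta')$. The pushforward along the associated morphism of topoi is always left exact, and right exactness follows because local surjectivity of a map of sheaves is tested on the very same Zariski covers on either side; thus ${i\crism}_*$ is exact. For the structure sheaf, both $(\sO_{X_0/S}^{(m)})_{(U_0,T,J',\delta')}$ and $(\sO_{X/S}^{(m)})_{(U,T,J,\delta)}$ are by definition $\Gamma(T,\sO_T)$, and this identification is compatible with restriction maps, whence ${i\crism}_*\sO_{X_0/S}^{(m)}\cong\sO_{X/S}^{(m)}$.

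Finally, for the preservation of crystals I would use that a morphism $(U,T,J,\delta)\to(U',T',K,\epsilon)$ in $\Crism(X/S)$ is sent by $u$ to a morphism of the corresponding $X_0$-thickenings inducing the same map $\sO_T\to\sO_{T'}$ of structure sheaves. The transition morphism of ${i\crism}_*E$ for the first arrow is therefore literally the transition morphism of $E$ for its $u$-image; as $E$ is an $m$-crystal the latter is an isomorphism, so ${i\crism}_*E$ is an $m$-crystal as well. I expect the construction and compatibility of $\delta'$ in the second paragraph to be the only genuinely delicate step; the remaining arguments are formal consequences of the explicit relabelling description of ${i\crism}_*$.
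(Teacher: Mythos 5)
Your proposal follows essentially the same route as the paper: the paper likewise associates to $(U,T,J,\delta)\in\Crism(X/S)$ the $m$-PD thickening $(U_0,T)$ of $U_0=U\times_X X_0$ inside the \emph{same} scheme $T$, establishes the canonical identification ${i\crism}_*(E)_{(U,T,J,\delta)}\cong E_{(U_0,T)}$, and derives all three assertions from that formula. The paper leaves the construction of the $m$-PD structure on the enlarged ideal and the ensuing deductions as ``easy to show,'' so your write-up is simply a more detailed version of the same argument (your aside attributing the identification to the smoothness of $X/S$ is not really needed, but it does no harm).
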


\begin{proof}
If $(U,T,J,\delta)$ is an $m$-PD thickening in $\Cris^{(m)}(X/S)$, and if $U_0$ denotes
the fiber product $U\times_X X_0$, then the closed immersion $U_0\hookrightarrow T$ defines an
$m$-PD thickening, which is an object of $\Cris^{(m)}(X_0/S)$.
Let $(U_0,T)$ denote this thickening. Then, $(U_0,T)$ represents $i^{\ast}(U_0, T)$.
Because of the construction and of Yoneda's lemma, if $E$ is an $\sO_{X_0/S}^{(m)}$-module,
$i_{\cris,\ast}(E)(U, T)$ is canonically isomorphic to $E(U_0, T)$,
which gives a canonical isomorphism
\[
    \begin{tikzpicture}
        \matrix(m)[matrix of math nodes,
                     row sep=3em, column sep=2.5em, text height=1.5ex, text depth=0.25ex]
                     { 
                     {i\crism}_\ast(E)_{(U,T,J,\delta)} & E_{(U_0,T)}. \\
                     };
        \path[->] (m-1-1) edge node[above,inner sep=0.5pt] {$\sim$} (m-1-2);
    \end{tikzpicture}
\]
All assertions in the proposition follow from this isomorphism. 
\end{proof}

\begin{remark}
    (This remark is due to P. Berthelot \cite{Berthelot:LAM}.)

    In case $m=0$, Proposition \ref{thm:lift} holds for an arbitrary closed immersion $i\colon X_0\to Y$
    over $S$ to a smooth $S$-scheme $Y$.
    If $m>0$, however, the direct image of an $m$-crystal along such a closed immersion is not necessarily an $m$-crystal.
    This defect can be avoided if we work in the higher-level analogue of the restricted crystalline site \cite[IV.2]{Berthelot:CCSCP}.

    In this article, we do not introduce the ``restricted $m$-crystalline site'' because
    we do not treat arbitrary closed immersions.
\end{remark}    

The following proposition and the corollary is proved as in the classical case
\cite[6.6,6.7]{Berthelot-Ogus:NCC} with the aid of Proposition \ref{thm:lift}.

\begin{proposition}
\label{thm:crystal}
    In the situation above, the following categories are equivalent:

    {\rm (i)} the category of the $m$-crystals in $\sO_{X_0/S}^{(m)}$-modules;

    {\rm (ii)} the category of the $\sO_X$-modules equipped with a hyper $m$-PD stratification.
\end{proposition}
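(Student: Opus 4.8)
The plan is to construct functors in both directions and verify they are mutually quasi-inverse, following the template of the classical crystal--stratification correspondence but with the $m$-PD envelope $P_{X/S}^{(m)}$ playing the role of the ordinary PD envelope. First I would produce a stratified module from an $m$-crystal $E$. The closed immersion $i\colon X_0\hookrightarrow X$ is cut out by $\fa_0\sO_X$, and since $(\fa_0,\fb_0,\gamma_0)$ is an $m$-PD sub-ideal of $\fa$ while $(\fb,\gamma)$ extends to $\sO_X$, the pair $(X_0,X)$ is an object of $\Crism(X_0/S)$. Set $M:=E_{(X_0,X)}$, an $\sO_X$-module. To build the stratification I note that $(X_0,P_{X/S}^{(m)})$, via the diagonal composed with $i$, is again an object of $\Crism(X_0/S)$ and that each projection $p_i\colon P_{X/S}^{(m)}\to X$ defines a morphism $(X_0,P_{X/S}^{(m)})\to(X_0,X)$ in the site. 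The crystal property then yields $p_i^*M\xrightarrow{\sim}E_{(X_0,P_{X/S}^{(m)})}$, that is, isomorphisms $\sP_{X/S}^{(m)}\otimes M\xrightarrow{\sim}E_{(X_0,P_{X/S}^{(m)})}\xleftarrow{\sim}M\otimes\sP_{X/S}^{(m)}$ whose composite is the desired $\epsilon$; reduction to the identity on $M$ is the diagonal case, and the cocycle condition comes from evaluating $E$ on the $m$-PD envelope of $X\hookrightarrow X\times_S X\times_S X$ and comparing the three projections.

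Conversely, I would build an $m$-crystal from a stratified module $(M,\epsilon)$. Given a thickening $(U,T,J,\delta)$ in $\Crism(X_0/S)$, smoothness of $X$ over $S$ lets me lift the composite $U\hookrightarrow X_0\hookrightarrow X$ Zariski-locally on $T$ to an $S$-morphism $g\colon T\to X$, and I set $E_T:=g^*M$. The task is then to show that $E_T$ is independent of $g$ up to a canonical isomorphism, that these isomorphisms are transitive, and that they are compatible with the morphisms of the site, so that the $E_T$ assemble into a sheaf carrying the crystal transition maps.

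The heart of the argument---and the step I expect to be the main obstacle---is the comparison of two local lifts $g,g'\colon T\to X$. Since $g$ and $g'$ agree after restriction to $U$, the induced map $(g,g')\colon T\to X\times_S X$ carries the diagonal ideal into $J$; the universal property of Proposition-Definition~\ref{thm:envelope} is what guarantees that $(g,g')$ factors through $P_{X/S}^{(m)}$, and this factorization relies precisely on the compatibility of $(J,\delta)$ with $(\fb,\gamma)$ built into the thickening (Definition~\ref{def:site}). Pulling back $\epsilon$ along the resulting map $T\to P_{X/S}^{(m)}$ gives a canonical isomorphism $g^*M\xrightarrow{\sim}g'^*M$, and the cocycle condition on $\epsilon$ makes these isomorphisms transitive for three lifts, hence independent of all choices and functorial in the site. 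This is where the level-$m$ theory must be invoked with care: unlike the classical case, one cannot appeal to an ordinary PD calculation, and the availability of the factorization through the $m$-PD envelope must be grounded in the $m$-PD compatibility conditions rather than taken for granted. Granting this, the two constructions are visibly quasi-inverse, and the remaining functoriality needed to promote them to an equivalence of categories is routine.
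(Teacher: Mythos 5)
Your proposal is correct and coincides with the argument the paper intends: the paper offers no proof of Proposition \ref{thm:crystal} beyond the remark that it ``is proved as in the classical case,'' and your outline is precisely that classical crystal--stratification correspondence transported to level $m$, with the universal property of the $m$-PD envelope (Proposition--Definition \ref{thm:envelope}) supplying the factorization of $(g,g')$ through $P_{X/S}^{(m)}$ exactly where the classical proof uses the ordinary PD envelope. No further comparison is needed.
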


\begin{proof}
    For the sake of completeness of the article, we include here a construction of the equivalence of these categories
    following the classical one \cite[6.6]{Berthelot-Ogus:NCC} applied in our closed immersion.

    First, let $E$ be an $m$-crystal.
    Then, the two projection $p_1,p_2\colon P_{X/S}^{(m)}\to X$ give isomorphisms $p_i^{\ast}E_X\to E_{P_{X/S}^{(m)}}$ ($i=1,2$).
    Then we get an isomorphism $\sP^{(m)}_{X/S}\otimes_{\sO_X} E_X\to E_X\otimes_{\sO_X}\sP^{(m)}_{X/S}$,
    which gives a hyper $m$-PD stratification on the $\sO_X$-module $E_X$.

    Now, let $F$ be an $\sO_X$-module equipped with a hyper $m$-PD stratification.
    We construct an $m$-crystal $E$ in $\sO_{X_0/S}^{(m)}$-modules.
    For each object $(U,T,J,\delta)$ of $\Cris^{(m)}(X/S)$ with a retraction
    $h\colon T\to X$ extending $U\to X$, the sheaf $E_{(U,T,J,\delta)}$ is defined to be $h^{\ast}(\sO_F)$;
    this condition defines a unique $m$-crystal $E$.
\end{proof}

\begin{corollary}
    The functor ${i\crism}_\ast$ in Proposition \ref{thm:lift} induces an equivalence of categories
    from the category of the $m$-crystals in $\sO_{X_0/S}^{(m)}$-modules to
    that of the $m$-crystals in $\sO_{X/S}^{(m)}$-modules.
    A quasi-inverse of this functor is the inverse image functor ${i\crism}^\ast$ of the morphism $i\crism$ of topoi induced from $i$.
    \label{cor:cryst_equiv}
\end{corollary}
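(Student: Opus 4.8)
The plan is to reduce everything to the equivalence of Proposition \ref{thm:crystal} between $m$-crystals and stratified modules, and then to show that ${i\crism}_*$ becomes the identity functor under it. First I would apply Proposition \ref{thm:crystal} to the given sub-ideal $\fa_0$, obtaining an equivalence $\Phi_0$ from the category of $m$-crystals in $\sO_{X_0/S}^{(m)}$-modules to that of $\sO_X$-modules with a hyper $m$-PD stratification; concretely $\Phi_0$ sends $E$ to its value $E_{(X_0,X)}$ on the thickening $X_0\hookrightarrow X$, equipped with the stratification coming from evaluation on $P_{X/S}^{(m)}$. I would then apply the very same proposition to the zero sub-ideal $\fa_0=0$, for which $S_0=S$ and $i$ is the identity $X\hookrightarrow X$; this gives an equivalence $\Phi$ from the $m$-crystals in $\sO_{X/S}^{(m)}$-modules to the \emph{same} category of stratified $\sO_X$-modules, with $\Phi(F)=F_{(X,X)}$. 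The two targets coincide because $\sP_{X/S}^{(m)}$ depends only on $X/S$.

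The crux is the natural isomorphism $\Phi\circ{i\crism}_*\cong\Phi_0$. For this I would invoke the canonical isomorphism
\[
{i\crism}_*(E)_{(U,T,J,\delta)}\ \cong\ E_{(U_0,T)},\qquad U_0=U\times_X X_0,
\]
established in the proof of Proposition \ref{thm:lift}. Evaluating it at the thickening $(X,X)$, where $U_0=X_0$, gives ${i\crism}_*(E)_{(X,X)}\cong E_{(X_0,X)}$, i.e.\ $\Phi({i\crism}_*E)\cong\Phi_0(E)$ as $\sO_X$-modules. I would then check that this identification respects the stratifications by applying the naturality of the same isomorphism to the thickening $P_{X/S}^{(m)}$ through which both stratifications are defined. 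Granting this, $\Phi$ and $\Phi_0$ being equivalences forces ${i\crism}_*$ to restrict to an equivalence of the two categories of $m$-crystals.

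It remains to identify the quasi-inverse with ${i\crism}^*$. We have the adjunction ${i\crism}^*\dashv{i\crism}_*$ on the full topoi, and, as in the classical theory, the inverse image functor carries $m$-crystals to $m$-crystals. Hence ${i\crism}^*$ restricts to a functor from $m$-crystals on $X/S$ to $m$-crystals on $X_0/S$ that is left adjoint to the equivalence ${i\crism}_*$. Since a left adjoint of an equivalence is automatically a quasi-inverse, ${i\crism}^*$ is the one we seek; equivalently, essential surjectivity writes each crystal $F$ on $X/S$ as ${i\crism}_*E$, whereupon the counit ${i\crism}^*{i\crism}_*E\to E$ is necessarily an isomorphism.

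The step I expect to be the main obstacle is the stratification compatibility in the middle paragraph: one must unwind how the hyper $m$-PD stratification is assembled from the two projections of $P_{X/S}^{(m)}$ and confirm that the isomorphism of Proposition \ref{thm:lift} is compatible with the two stratifications. A secondary point needing justification is that ${i\crism}^*$ preserves the crystal condition, which underlies the identification of the quasi-inverse; this should follow from the functoriality of inverse images on $m$-crystalline topoi, established exactly as in the classical case.
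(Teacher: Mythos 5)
Your proposal is correct and follows exactly the route the paper intends: the corollary is stated with the remark that it is ``proved as in the classical case,'' and the classical argument is precisely your double application of Proposition \ref{thm:crystal} (once for $\fa_0$ and once for the zero sub-ideal), combined with the evaluation isomorphism ${i\crism}_*(E)_{(U,T,J,\delta)}\cong E_{(U_0,T)}$ from the proof of Proposition \ref{thm:lift} and the adjunction ${i\crism}^*\dashv{i\crism}_*$ to identify the quasi-inverse.
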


\subsection{Linearization.}
\label{ss:linearization}

Here, we discuss the linearization.  
Let $(S,\a,\b,\gamma)$ be an $m$-PD scheme, and $X$ an $S$-scheme such that
the $m$-PD structure $(\fb,\gamma)$ extends to $\sO_X$.

First, $j_X$ is the localization morphism
\[
j_X\colon (X/S)\crism|_X\to (X/S)\crism,
\]
where the source denotes the localized category of $(X/S)\crism$ over the $m$-PD thickening $(X,X,0)$
with the trivial PD structure on $0$.
Then, composing with $u_{X/S}^{(m)}$, we get the morphism of topoi
\[
u_{X/S}^{(m)}|_X\colon (X/S)\crism|_X\to X_{\Zar}.
\]

Now, we define the linearization functor as
\[
L^{(m)}={j_X}_*\circ {u_{X/S}^{(m)}|_X}^*\colon X_{\Zar}\to (X/S)\crism.
\]
The $\sO_X$-module $L^{(m)}(\sF)_X$ is also denoted by $L^{(m)}_X(\sF)$.

\begin{proposition}
\label{thm:linearization}
Assume that $X$ is smooth over $S$, and let $\sF$ be an $\sO_X$-module.

{\rm (i)} $L^{(m)}(\sF)$ is an $m$-crystal, and $L^{(m)}_X(\sF)=\P_{X/S}^{(m)}\otimes_{\sO_X} \sF$.

{\rm (ii)} We have $R{u_{X/S}^{(m)}}_\ast L^{(m)}(\sF)=\sF$.

{\rm (iii)} If $E$ is an $m$-crystal, there exists a canonical isomorphism
\[
    E\otimes_{\sO_{X/S}^{(m)}} L^{(m)}(\sF)\to L^{(m)}(E_X\otimes_{\sO_X}\sF).
\]
\end{proposition}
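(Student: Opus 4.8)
\emph{Proof proposal.} The plan is to adapt the classical localization argument for the linearization functor to the level-$m$ setting, the only structural change being that every divided-power envelope is replaced by the $m$-PD envelope of Proposition-Definition \ref{thm:envelope}. For (i), I would first record an explicit description of $L^{(m)}(\sF)$ on an arbitrary $m$-PD thickening $(U,T,J,\delta)$. Since $u_{X/S}^{(m)}|_X=u_{X/S}^{(m)}\circ j_X$, we have $(u_{X/S}^{(m)}|_X)^\ast=j_X^\ast\,(u_{X/S}^{(m)})^\ast$, and unwinding the pushforward ${j_X}_\ast$ over the localization at $(X,X,0)$ identifies $L^{(m)}(\sF)_{(U,T,J,\delta)}$ with $p_{1\ast}(\sO_D\otimes_{\sO_X}\sF)$, where $D$ is the $m$-PD envelope of $U$ in $T\times_S X$ compatible with $(\fb,\gamma)$, the sheaf $\sF$ is pulled back along the second projection $p_2\colon D\to X$, and the result is viewed as an $\sO_T$-module through $p_1\colon D\to T$. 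Taking $(U,T,J,\delta)=(X,X,0)$ gives $D=P_{X/S}^{(m)}$, hence the formula $L^{(m)}_X(\sF)=\sP_{X/S}^{(m)}\otimes_{\sO_X}\sF$. The crystal property then amounts to the base-change isomorphism $D_U(T\times_S X)\cong T\times_{T'}D_{U'}(T'\times_S X)$ attached to a morphism $(U,T,J,\delta)\to(U',T',J',\delta')$; this holds because $X/S$ is smooth, so that, in local coordinates $t_1,\dots,t_n$, $\sP_{X/S}^{(m)}$ is free on the $m$-PD monomials in the $\tau_i=1\otimes t_i-t_i\otimes 1$, and $m$-PD envelopes commute with the flat base change in play.

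For (ii) I would argue entirely at the level of topoi and deliberately avoid any Poincar\'e lemma, since this identity is meant to feed the later cohomological computations rather than to follow from them. Writing $L^{(m)}(\sF)={j_X}_\ast j_X^\ast(u_{X/S}^{(m)})^\ast\sF$ and using $u_{X/S}^{(m)}\circ j_X=u_{X/S}^{(m)}|_X$, I would first verify that $(u_{X/S}^{(m)}|_X)^\ast\sF$ is acyclic for ${j_X}_\ast$, so that $R{u_{X/S}^{(m)}}_\ast L^{(m)}(\sF)\cong R(u_{X/S}^{(m)}|_X)_\ast(u_{X/S}^{(m)}|_X)^\ast\sF$. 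The localized site over $(X,X,0)$ has that thickening as a final object, and the diagonal thickenings $(V,V,0)$ for $V\subset X$ open assemble into a section of $u_{X/S}^{(m)}|_X$; the resulting retraction forces the unit $\sF\to R(u_{X/S}^{(m)}|_X)_\ast(u_{X/S}^{(m)}|_X)^\ast\sF$ to be an isomorphism, which can be checked Zariski-locally and hence gives the claim.

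Part (iii) I would treat as a projection formula. Both sides are $m$-crystals — $L^{(m)}(\sF)$ and $L^{(m)}(E_X\otimes\sF)$ by (i), and $E\otimes L^{(m)}(\sF)$ because tensor products of $m$-crystals are again $m$-crystals — so it suffices to build a canonical map and verify it on thickenings, where by (i) everything is expressed through $\sO_D$. Over $D=D_U(T\times_S X)$ the crystal structure of $E$ furnishes two identifications $p_1^\ast E_T\cong E_D\cong p_2^\ast E_X$; composing them, tensoring with $p_2^\ast\sF$, and applying $p_{1\ast}$ turns $E_{(U,T)}\otimes L^{(m)}(\sF)_{(U,T)}$ into $L^{(m)}(E_X\otimes\sF)_{(U,T)}$, and these maps are visibly compatible with restriction.

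The step I expect to be the main obstacle is (ii): the whole argument rests on the soft statement $R(u_{X/S}^{(m)}|_X)_\ast(u_{X/S}^{(m)}|_X)^\ast\sF\cong\sF$, that is, on the ${j_X}_\ast$-acyclicity together with the contractibility of the localized $m$-crystalline site over $(X,X,0)$. Establishing these carefully, and in particular making sure the argument does not secretly invoke the level-$m$ Poincar\'e lemma that the paper is still working toward, is the delicate point; the explicit freeness of $\sP_{X/S}^{(m)}$ coming from smoothness of $X$ is what should keep the underlying local computation manageable.
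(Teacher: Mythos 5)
Your proposal follows essentially the same route as the paper: part (i) via the explicit description $L^{(m)}(\sF)_{(U,T,J,\delta)}={p_T}_*{p_X}^*(\sF)$ on the envelope $D^{(m)}_U(T\times_SX)$, with the crystal property reduced to a comparison of $m$-PD envelopes that the paper justifies exactly as you do in substance --- by the $m$-PD polynomial algebra structure of $\sP_{X/S}^{(m)}$ over $\sO_X$ (note it is this, rather than ``flat base change,'' that does the work, since the transition morphisms between thickenings need not be flat). For (ii) and (iii) the paper simply cites Le Stum--Quir\'os, and your sketches reproduce the standard arguments found there.
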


\begin{proof}
By the construction of $L^{(m)}$, for each $(U,T,J,\delta)\in\Crism(X/S)$ we have
\[
L^{(m)}(\sF)_{(U,T,J,\delta)}={p_T}_*{p_X}^*(\sF),
\]
where $p_T$ \resp{$p_X$} denotes the projection from $D_{U,\gamma}^{(m)}(T\times_S X)$ to $T$ \resp{to $X$}.
This, in particular, shows the latter half of (i).

As for the former half, by following the classical argument \cite[IV 3.1.6]{Berthelot:CCSCP},
it is sufficient to show that the natural morphism
\begin{equation}
\label{eqn:restrictedbasechange}
D^{(m)}_{U,\gamma}(T\times_SX)\to T\times_X P_{X/S}^{(m)}
\end{equation}
is an isomorphism for all $m$-PD thickenings $(U,T,J,\delta)$ such that a retraction $T\to X$ exists.
Now, because $\sP_{X/S}^{(m)}$ is locally isomorphic to an $m$-PD polynomial algebra over $\sO_X$,
the $\sO_T$-algebra $\sO_T\otimes_{\sO_X}\sP_{X/S}^{(m)}$ is an $m$-PD polynomial algebra (over $\sO_T$).
Therefore, the ideal of the closed immersion $U\hookrightarrow T\times_X P_{X/S}^{(m)}$ has an $m$-PD structure
compatible with $\gamma$.
Hence a morphism $T\times_X P_{X/S}^{(m)}\to D^{(m)}_{U,\gamma}(T\times_SX)$ is obtained, and
the universality shows that this is an inverse morphism of (\ref{eqn:restrictedbasechange}).

The proof of (ii) and (iii) can be found in \cite[4]{LeStum-Quiros:EPLCCHL}.
\end{proof}

Now, we set some notations.
For each natural number $r$, let $X^{(r)}_{/S}$ denote the fiber product over $S$ of $r$ copies of $X$.
The $m$-PD envelope of the diagonal immersion $X\hookrightarrow X^{(r+1)}_{/S}$ is denoted by $P^{(m)}_{X/S}(r)$,
and its structure sheaf is denoted by $\sP^{(m)}_{X/S}(r)$.
We have therefore $P^{(m)}_{X/S}=P^{(m)}_{X/S}(1)$ and $\sP_{X/S}^{(m)}=\sP_{X/S}^{(m)}(1)$.
Again, we regard $X^{(r+1)}_{/S}$ and $P_{X/S}^{(m)}(r)$ as schemes over $X$ by the first projection,
which lets $\sO_X$ act on $\sO_{X^{(r+1)}_{/S}}=\sO_X\otimes_{\sO_S}\dots\otimes_{\sO_S}\sO_X$ by multiplication to the first factor
and induce on $\sP_{X/S}^{(m)}(r)$ an $\sO_X$-algebra structure.

For each natural number $r$ and $i\in\{0,\dots,r\}$, let $j_r^i\colon X^{(r+1)}_{/S}\to X$ denote
the $(i+1)$-st projection.
Let $d_r^i\colon P^{(m)}_{X/S}(r+1)\to P^{(m)}_{X/S}(r)$ ($0\leq i\leq r+1$) denote the morphism
corresponding to
\[
(j_{r+1}^0,\dots, j_{r+1}^{i-1}, j_{r+1}^{i+1},\dots, j_{r+1}^{r+1})_S\colon X^{(r+2)}_{/S}\to X^{(r+1)}_{/S},
\]
and $s_r^i\colon P^{(m)}_{X/S}(r-1)\to P^{(m)}_{X/S}(r)$ ($0\leq i\leq r$) the one corresponding to
\[
(j_{r-1}^0,\dots, j_{r-1}^i, j_{r-1}^i,\dots, j_{r-1}^r)_S\colon X^{(r)}_{/S}\to X^{(r+1)}_{/S}.
\]

Then, these data make $P_{X/S}^{(m)}(\bullet)$ a simplicial scheme over $S$, and consequently
$\sP_{X/S}^{(m)}(\bullet)$ is a DGA (differential graded algebra) over $f^{-1}(\sO_S)$;
the differential morphism $d^r\colon\sP_{X/S}^{(m)}(r)\to \sP_{X/S}^{(m)}(r+1)$ is by definition
\[
d^r=\sum_{i=0}^{r+1} (-1)^i{d_r^i}^*.
\]

If $X$ is smooth over $S$, then $\sP_{X/S}^{(m)}$ is isomorphic to the $m$-PD polynomial algebra $\sO_X\{\tau_1,\dots,\tau_n\}_{(m)}$.
Moreover, the graded $\sO_X$-algebra $\sP_{X/S}^{(m)}(\bullet)$ can be identified with the tensor algebra of $\sP_{X/S}^{(m)}$ over $\sO_X$
\cite[Remarque after 2.1.3]{Berthelot:DMAI}; in particular, $\sP_{X/S}^{(m)}(2)$ is isomorphic to $\sP_{X/S}^{(m)}\otimes_{\sO_X}\sP_{X/S}^{(m)}$.

Now, we proceed to a calculation of the hyper $m$-PD stratification on $\sP_{X/S}^{(m)}\otimes_{\sO_X}\sF=L_X^{(m)}(\sF)$,
which exists because of Proposition \ref{thm:linearization} (i) and Proposition \ref{thm:crystal}. 

\begin{lemma}
    \label{lem:inducingstrat}
    Assume that $X$ is smooth over $S$.
    Then, the $m$-PD stratification on $L_X^{(m)}(\sF)=\sP_{X/S}^{(m)}\otimes_{\sO_X}\sF$ is induced by 
    \begin{align*}
    & \sO_{X^{(2)}_{/S}}\otimes (\sO_{X^{(2)}_{/S}} \otimes \sF)\to (\sO_{X^{(2)}_{/S}}\otimes\sF)\otimes\sO_{X^{(2)}_{/S}};\\
    & (1\otimes 1)\otimes(f\otimes g)\otimes x\mapsto (1\otimes g)\otimes x\otimes(1\otimes f),
    \end{align*}
    where the tensor products are taken over $\sO_X$.
\end{lemma}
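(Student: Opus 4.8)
The plan is to compute, for $E=L^{(m)}(\sF)$, the $m$-PD stratification that the equivalence of Proposition \ref{thm:crystal} attaches to the $m$-crystal structure provided by Proposition \ref{thm:linearization}\,(i), and to verify that it coincides with the displayed arrow. First I would recall the recipe underlying Proposition \ref{thm:crystal}. Let $q_1,q_2\colon P_{X/S}^{(m)}\to X$ be the two projections; the diagonal $X\hookrightarrow P_{X/S}^{(m)}$ makes $(X,P_{X/S}^{(m)})$ into an $m$-PD thickening, and each $q_i$ is a morphism of thickenings $(X,P_{X/S}^{(m)})\to(X,X,0)$ to the final object. For an $m$-crystal $E$ the crystal condition gives canonical isomorphisms $q_i^\ast E_X\xrightarrow{\sim}E_{(X,P_{X/S}^{(m)})}$, and the associated stratification is the resulting comparison isomorphism between $q_1^\ast E_X$ and $q_2^\ast E_X$, rewritten as $\sP_{X/S}^{(m)}\otimes E_X\to E_X\otimes\sP_{X/S}^{(m)}$ by transporting the two $\sO_X$-structures on $\sP_{X/S}^{(m)}$.

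Second, I would evaluate this for $E=L^{(m)}(\sF)$. By the formula established in the proof of Proposition \ref{thm:linearization}, one has $E_{(U,T,J,\delta)}={p_T}_\ast p_X^\ast\sF$ for the projections $p_T,p_X$ of $D_U^{(m)}(T\times_SX)$; taking $(U,T)=(X,P_{X/S}^{(m)})$, whose thickening admits the retraction $q_1$, the base-change isomorphism (\ref{eqn:restrictedbasechange}) yields $D_X^{(m)}(P_{X/S}^{(m)}\times_SX)\cong P_{X/S}^{(m)}\times_XP_{X/S}^{(m)}$. Hence $E_{(X,P_{X/S}^{(m)})}\cong(\sP_{X/S}^{(m)}\otimes_{\sO_X}\sP_{X/S}^{(m)})\otimes\sF$, and the two pullbacks $q_1^\ast E_X$ and $q_2^\ast E_X$ become the two evident copies of $\sP_{X/S}^{(m)}\otimes(\sP_{X/S}^{(m)}\otimes\sF)$ sitting inside this triple tensor product.

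Third, I would trace a section through these canonical identifications. Writing an element of $\sP_{X/S}^{(m)}$ as $f\otimes g$, one checks that the composite of the two crystal comparison isomorphisms carries $(1\otimes1)\otimes(f\otimes g)\otimes x$ to $(1\otimes g)\otimes x\otimes(1\otimes f)$, and $\sP_{X/S}^{(m)}$-linearity then extends this to the whole module and reproduces the displayed map. I would finish by remarking that this arrow is visibly the identity on $\sF$ after the augmentation $\sP_{X/S}^{(m)}\to\sO_X$ and satisfies the cocycle condition, so that it is indeed the stratification produced by the equivalence.

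The main obstacle I anticipate is the bookkeeping in the second and third steps: keeping straight the left and right $\sO_X$-module structures on the several copies of $\sP_{X/S}^{(m)}$, and confirming that the canonical crystal comparison isomorphisms realise exactly the combinatorial splitting $f\otimes g\mapsto(g,f)$ rather than its transpose. In particular the orientation of the two projections—which one feeds the outer tensor factor—is where an easily mismatched choice enters, so I would fix these conventions at the outset and cross-check them against the requirement that the stratification restrict to the identity after augmentation.
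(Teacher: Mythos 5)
Your proposal is correct and follows essentially the same route as the paper: the paper likewise identifies $L^{(m)}(\sF)_{(X,P_{X/S}^{(m)})}$ with $\sP_{X/S}^{(m)}(2)\otimes\sF$ (your $P_{X/S}^{(m)}\times_XP_{X/S}^{(m)}$) and obtains the stratification as the composite of the two canonical comparison isomorphisms $\sP_{X/S}^{(m)}\otimes(\sP_{X/S}^{(m)}\otimes\sF)\to\sP_{X/S}^{(m)}(2)\otimes\sF\leftarrow(\sP_{X/S}^{(m)}\otimes\sF)\otimes\sP_{X/S}^{(m)}$, finishing with the same element chase you defer to ``one checks.'' One small caution: your closing remark that the map restricts to the identity after augmentation and satisfies the cocycle condition is only a consistency check, not a characterization of the stratification, so the burden of proof really does rest on the explicit computation of the composite, as in the paper.
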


\begin{proof}
    By the proof of Proposition \ref{thm:crystal} and by Proposition \ref{thm:linearization} (i), the $m$-PD stratification on $L_X^{(m)}(\sF)$ is the composite of the isomorphism
\[
\sP_{X/S}^{(m)}\otimes (\sP_{X/S}^{(m)}\otimes \sF)\to \sP_{X/S}^{(m)}(2)\otimes\sF;\hspace{1em}
(1\otimes 1)\otimes(f\otimes g)\otimes x \mapsto 1\otimes f\otimes g\otimes x
\]
and the inverse of the isomorphism
\[
(\sP_{X/S}^{(m)}\otimes\sF)\otimes\sP_{X/S}^{(m)} \to \sP_{X/S}^{(m)}(2)\otimes\sF;\hspace{1em}
(1\otimes 1)\otimes x\otimes(f\otimes g)\mapsto f\otimes g\otimes 1\otimes x.
\]
Now, an easy observation shows the assertion.
\end{proof}

We omit the proof of the following proposition since it is just a generalization
of the classical argument.

\begin{proposition}
    Let $M$ and $N$ be two $\sO_X$-modules, and $u$ a hyper $m$-PD differential operator from $M$ to $N$.
    Then, the morphism
    \[
    \begin{tikzpicture}
        \matrix (m) [matrix of math nodes,
                     row sep=3em, column sep=3.5em, text height=1.5ex, text depth=0.25ex]
                     { \sP_{X/S}^{(m)}\otimes_{\sO_X} M & \sP_{X/S}^{(m)}\otimes_{\sO_X}\sP_{X/S}^{(m)}\otimes_{\sO_X} M &
                         \sP_{X/S}^{(m)}\otimes_{\sO_X} N \\
                    };
        \path[->,font=\scriptsize]
        (m-1-1) edge node[auto] {${d_1^1}^*\otimes \id$} (m-1-2)
        (m-1-2) edge node[auto] {$\id\otimes u$} (m-1-3);
    \end{tikzpicture}
    \]
    is compatible with the hyper $m$-PD stratifications on both sides viewed as $\P_{X/S}^{(m)}\otimes M=L^{(m)}_X(M)$ and as
    $\P_{X/S}^{(m)}\otimes N=L^{(m)}_X(N)$.
\end{proposition}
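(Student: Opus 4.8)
The plan is to reduce the assertion to the commutativity of a single square and then to verify that square by a direct computation with the explicit stratification of Lemma \ref{lem:inducingstrat}. Write $\phi\colon L_X^{(m)}(M)\to L_X^{(m)}(N)$ for the displayed composite, and let $\epsilon_M$, $\epsilon_N$ denote the hyper $m$-PD stratifications on $L_X^{(m)}(M)=\sP_{X/S}^{(m)}\otimes M$ and $L_X^{(m)}(N)=\sP_{X/S}^{(m)}\otimes N$ coming from Proposition \ref{thm:linearization} (i) together with Proposition \ref{thm:crystal}. By definition, $\phi$ is compatible with the stratifications exactly when
\[
\epsilon_N\circ(\id_{\sP_{X/S}^{(m)}}\otimes\phi)=(\phi\otimes\id_{\sP_{X/S}^{(m)}})\circ\epsilon_M,
\]
and this single identity is what I would establish.

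First I would make every arrow explicit. The map ${d_1^1}^*\colon\sP_{X/S}^{(m)}\to\sP_{X/S}^{(m)}\otimes\sP_{X/S}^{(m)}$ is the comultiplication attached to the projection of $X\times_SX\times_SX$ onto its two outer factors, so on sections it reads $f\otimes g\mapsto(f\otimes 1)\otimes(1\otimes g)$; consequently $\phi\bigl((f\otimes g)\otimes x\bigr)=(f\otimes 1)\otimes u\bigl((1\otimes g)\otimes x\bigr)$. By Lemma \ref{lem:inducingstrat}, both $\epsilon_M$ and $\epsilon_N$ are induced by the same ``swap'' $(1\otimes 1)\otimes(f\otimes g)\otimes x\mapsto(1\otimes g)\otimes x\otimes(1\otimes f)$. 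Feeding a section $(1\otimes 1)\otimes(f\otimes g)\otimes x$ into the two paths of the square, and using that $u$ is $\sO_X$-linear (so that $\id\otimes u$ is well defined over the tensor product), one finds that both composites send it to $(1\otimes 1)\otimes u\bigl((1\otimes g)\otimes x\bigr)\otimes(1\otimes f)$; hence the square commutes on sections, and therefore as morphisms.

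The genuine work is bookkeeping rather than ideas. One must keep track of the several $\sO_X$-module structures on $\sP_{X/S}^{(m)}\otimes\sP_{X/S}^{(m)}\otimes M$ induced by the three projections of $X\times_SX\times_SX$, so that the middle factor produced by ${d_1^1}^*$ is absorbed by the swap on exactly the side intended; this is the only place where the precise shape of ${d_1^1}^*$ must be matched carefully against the formula of Lemma \ref{lem:inducingstrat}, and it is the step I expect to be the main obstacle. Finally, since that lemma only asserts that $\epsilon$ is \emph{induced} by a map defined before passing to $m$-PD envelopes, I would note that the identity above descends through the envelope by its functoriality, every map in sight being a morphism of $m$-PD rings. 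With this, the level-$m$ statement follows exactly as in the classical case \cite[IV]{Berthelot:CCSCP}.
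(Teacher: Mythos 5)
Your reduction to a single square, your description of ${d_1^1}^*$ on sections coming from $\sO_{X^{(2)}_{/S}}$, and the verification that both composites send $(1\otimes 1)\otimes(f\otimes g)\otimes x$ to $(1\otimes 1)\otimes u((1\otimes g)\otimes x)\otimes(1\otimes f)$ are all correct. (The paper gives no proof of this proposition, deferring to the classical argument, which proceeds structurally through the topos-theoretic description of $L^{(m)}$; so there is nothing in-text to compare with.) The gap is in your final descent step. The sections you check generate, over the outer copy of $\sP_{X/S}^{(m)}$ for which both composites are linear, only $\sP_{X/S}^{(m)}\otimes\sP'\otimes M$, where $\sP'$ is the image of $\sO_{X^{(2)}_{/S}}$ in $\sP_{X/S}^{(m)}$, i.e.\ the $\sO_X$-span of the monomials $\tau^K$. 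But $\sP_{X/S}^{(m)}$ is free on the divided powers $\tau^{\{I\}}$, and $\tau^I=q!\,\tau^{\{I\}}$ with $q!$ in general a zero divisor (already $\tau^{\{2p^m\}}$ is not an $\sO_X$-combination of the $\tau^K$ when $p=2$), so two module maps agreeing on $\sP'$ need not agree on $\sP_{X/S}^{(m)}$. Your proposed remedy --- that the identity descends through the envelope by functoriality because ``every map in sight is a morphism of $m$-PD rings'' --- fails at exactly this point: the universal property of the $m$-PD envelope determines \emph{ring} homomorphisms out of $\sP_{X/S}^{(m)}$ by their restriction to $\sO_{X^{(2)}_{/S}}$, but $u$ is an arbitrary $\sO_X$-linear map, so $\id\otimes u$ and hence both composites are merely module maps, to which the uniqueness in the adjunction of Proposition-Definition \ref{thm:envelope} does not apply.

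There are two standard ways to close this. Computationally, check the identity on the genuine basis, i.e.\ on sections $1\otimes\tau^{\{I\}}\otimes x$, using ${d_1^1}^*(\tau^{\{I\}})=\sum_{0\le V\le I}\qbinom{I}{V}\tau^{\{V\}}\otimes\tau^{\{I-V\}}$ from the proof of Proposition \ref{thm:Pm} and the formula of Proposition \ref{thm:hpd_of_Pm} for the stratification; the two sides then coincide by the associativity relation $\qbinom{I}{V}\qbinom{V}{W}=\qbinom{I}{W}\qbinom{I-W}{V-W}$ satisfied by the modified binomial coefficients. Structurally, isolate $u$ at the last step: write each composite as $(\cdots\otimes u)\circ(\theta_\bullet\otimes\id_M)$, where $\theta_1,\theta_2\colon\sP_{X/S}^{(m)}\otimes\sP_{X/S}^{(m)}\to\sP_{X/S}^{(m)}\otimes\sP_{X/S}^{(m)}\otimes\sP_{X/S}^{(m)}$ are honest ring homomorphisms assembled from ${d_1^1}^*$ and the stratification of $\sP_{X/S}^{(m)}$ itself. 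Your computation on $(f\otimes g)$-sections then legitimately proves $\theta_1=\theta_2$ by the universal property of the envelope, and applying $\id\otimes u$ yields the proposition. As written, however, the passage from polynomial sections to all of $\sP_{X/S}^{(m)}$ is the missing piece of your argument.
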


\subsection{Differential calculus.}
\label{ss:differential}

We start with a general setting.
Let $(S,\fa,\fb,\gamma)$ be an $m$-PD scheme and $f\colon X\to S$ a morphism
such that the $m$-PD structure $(\fb, \gamma)$ extends to $\sO_X$.

We define the sub-DGA $N\sP_{X/S}^{(m),\bullet}$ by
\begin{equation}
    \label{eqn:NP}
    N\P_{X/S}^{(m),r}=\bigcap_{i=0}^{r+1} \Ker\left({s_r^i}^\ast\right).
\end{equation}

Now, we assume that the morphism $f\colon X\to S$ is smooth, and that
$X$ has global coordinates $t=(t_1,\dots,t_n)$ over $S$;
we set $\tau_i:=d^0(t_i)\in N\sP_{X/S}^{(m),1}$, which is the pullback of $t_i\otimes 1-1\otimes t_i$
by the natural morphism $P_{X/S}^{(m)}\to X\times_SX$; for $I=(i_1,\dots,i_n)\in\bN^n$, we set
\[
\tau^{\{I\}}:=\prod_{j=1}^n\tau_j^{\{i_j\}}.
\]
Under this notation, $N\sP^{(m),1}_{X/S}$ is freely generated by $\{\tau^{\{I\}}\}_{I\in\bN^n\setminus\{0\}}$,
and the DGA $N\sP_{X/S}^{(m),\bullet}$ is isomorphic to the tensor algebra
of $N\sP^{(m),1}_{X/S}$ over $\sO_X$.

We keep these assumptions and notations in the remainder of this subsection.

\begin{proposition}
\label{thm:Pm}
For all $I\in\bN^n$, the morphism $d^1\colon \sP_{X/S}^{(m)}(1)\to \sP_{X/S}^{(m)}(2)$ satisfies
\[
d^1\left(\tau^{\{I\}}\right)=-\sum_{0<V<I}\qbinom{I}{V}\tau^{\{V\}}\otimes\tau^{\{I-V\}}.
\]
\end{proposition}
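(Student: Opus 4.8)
The plan is to compute the alternating sum $d^1=\sum_{i=0}^{2}(-1)^i{d_1^i}^*$ term by term, evaluating each coface map ${d_1^i}^*\colon\sP_{X/S}^{(m)}(1)\to\sP_{X/S}^{(m)}(2)$ on $\tau^{\{I\}}$ and only afterwards combining. The point that makes this tractable is that, unlike $d^1$, every individual ${d_1^i}^*$ is a homomorphism of $m$-PD algebras; hence it is multiplicative and commutes with the divided-power operations $x\mapsto x^{\{k\}}$. Working in the tensor-algebra description $\sP_{X/S}^{(m)}(2)=\sP_{X/S}^{(m)}\otimes_{\sO_X}\sP_{X/S}^{(m)}$, it therefore suffices to know the three coface maps on the generators $\tau_j$ and then propagate through products and divided powers.

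First I would record the values on generators: ${d_1^2}^*(\tau_j)=\tau_j\otimes 1$, ${d_1^0}^*(\tau_j)=1\otimes\tau_j$, and the only nontrivial one, ${d_1^1}^*(\tau_j)=\tau_j\otimes 1+1\otimes\tau_j$. These follow from the simplicial structure: writing $\tau_j=1\otimes t_j-t_j\otimes 1$, the maps $d_1^0$ and $d_1^2$ forget an outer factor of $X^{(3)}_{/S}$ and carry $\tau_j$ to one of the two adjacent coordinate differences, while $d_1^1$ forgets the middle factor and carries $\tau_j$ to the difference of the two outer coordinates, which splits as the sum of the two adjacent differences. Since each ${d_1^i}^*$ is an $m$-PD morphism, the two outer maps give immediately ${d_1^2}^*(\tau^{\{I\}})=\tau^{\{I\}}\otimes 1$ and ${d_1^0}^*(\tau^{\{I\}})=1\otimes\tau^{\{I\}}$.

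The substantive computation is the middle term ${d_1^1}^*(\tau^{\{I\}})=\prod_j(\tau_j\otimes 1+1\otimes\tau_j)^{\{i_j\}}$, which requires the addition formula
\[
(x+y)^{\{k\}}=\sum_{a+b=k}\qbinom{k}{a}\,x^{\{a\}}y^{\{b\}}.
\]
I would obtain this from $q!\,x^{\{k\}}=x^k$: expanding $(x+y)^k$ by the ordinary binomial theorem and replacing $x^a=q'!\,x^{\{a\}}$, $y^b=q''!\,y^{\{b\}}$, the coefficient of $x^{\{a\}}y^{\{b\}}$ becomes $\binom{k}{a}\,q'!\,q''!/q!=\binom{k}{a}\mbinom{k}{a}^{-1}=\qbinom{k}{a}$ (equivalently one may cite \cite[1.3.6]{Berthelot:DMAI}). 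Applying this in each variable, using $(\tau_j\otimes 1)^{\{a_j\}}(1\otimes\tau_j)^{\{b_j\}}=\tau_j^{\{a_j\}}\otimes\tau_j^{\{b_j\}}$, and multiplying over $j$ (so that the coefficients multiply to $\qbinom{I}{V}$ and the tensor factors assemble to $\tau^{\{V\}}\otimes\tau^{\{I-V\}}$) gives ${d_1^1}^*(\tau^{\{I\}})=\sum_{0\le V\le I}\qbinom{I}{V}\tau^{\{V\}}\otimes\tau^{\{I-V\}}$.

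Finally I would assemble $d^1(\tau^{\{I\}})=1\otimes\tau^{\{I\}}-\sum_{0\le V\le I}\qbinom{I}{V}\tau^{\{V\}}\otimes\tau^{\{I-V\}}+\tau^{\{I\}}\otimes 1$ and note that the $V=0$ and $V=I$ summands of the middle sum (each with coefficient $1$) are exactly $1\otimes\tau^{\{I\}}$ and $\tau^{\{I\}}\otimes 1$, so they cancel the two outer contributions and leave $-\sum_{0<V<I}\qbinom{I}{V}\tau^{\{V\}}\otimes\tau^{\{I-V\}}$, as claimed. The main obstacle is really just securing the two inputs---the value ${d_1^1}^*(\tau_j)=\tau_j\otimes 1+1\otimes\tau_j$ and the exact coefficient $\qbinom{k}{a}$ in the $m$-PD addition formula; everything else is multi-index bookkeeping and the boundary cancellation. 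As a consistency check, for $|I|=1$ the sum over $0<V<I$ is empty, recovering $d^1(\tau_j)=d^1d^0(t_j)=0$.
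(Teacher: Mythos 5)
Your proof is correct and follows essentially the same route as the paper: evaluate the three coface maps on $\tau^{\{I\}}$ (the two outer ones giving $\tau^{\{I\}}\otimes 1$ and $1\otimes\tau^{\{I\}}$, the middle one via the $m$-PD addition formula $(x+y)^{\{k\}}=\sum\qbinom{k}{a}x^{\{a\}}y^{\{b\}}$), then cancel the $V=0$ and $V=I$ boundary terms. The only caution is that your derivation of the addition formula by dividing through by $q!$ is merely heuristic when $q!$ is not invertible (as is typical here, $p$ being nilpotent), so the formula should indeed rest on the cited \cite[1.3.6]{Berthelot:DMAI}, exactly as the paper does.
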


\begin{proof}
Let us recall that $d^1$ is by definition equal to ${d_1^0}^*-{d_1^1}^*+{d_1^2}^*$.
We have
\[
{d_1^0}^*\left(\tau^{\{I\}}\right)=\tau^{\{I\}}\otimes 1 \hspace{5pt}\mathrm{and}\hspace{5pt} {d_1^2}^*\left(\tau^{\{I\}}\right)=1\otimes\tau^{\{I\}}.
\]
By using the equation ${d_1^1}^*(\tau_i)=\tau_i\otimes 1+1\otimes\tau_i$ and Proposition \ref{prop:pdbinom}, we calculate
\begin{eqnarray*}
    {d_1^1}^*\left(\tau^{\{I\}}\right) = (\tau\otimes 1+1\otimes\tau)^{\{I\}}
    &=& \sum_{0\leq V\leq I}\qbinom{I}{V}(\tau\otimes 1)^{\{V\}}(1\otimes\tau)^{\{I-V\}}\\
    &=& \sum_{0\leq V\leq I}\qbinom{I}{V}\tau^{\{V\}}\otimes\tau^{\{I-V\}}.
\end{eqnarray*}
This completes the proof. \end{proof}
\vspace{2pt}

The hyper $m$-PD stratification on $\sP_{X/S}^{(m)}=L^{(m)}_X(\sO_X)$ is described
in the following proposition.

\begin{proposition}
\label{thm:hpd_of_Pm}
The hyper $m$-PD stratification
\[
    \P_{X/S}^{(m)}\otimes_{\sO_X}\P_{X/S}^{(m)}\to \P_{X/S}^{(m)}\otimes_{\sO_X}\P_{X/S}^{(m)}
\]
of $\P_{X/S}^{(m)}$ maps $1\otimes\tau^{\{I\}}$ to
\[
\sum_{0\leq V\leq I}\qbinom{I}{V}\,\tau^{\{V\}}\otimes(-\tau)^{\{I-V\}}.
\]
\end{proposition}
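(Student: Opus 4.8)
The plan is to read off the stratification from Lemma~\ref{lem:inducingstrat} and to exploit that the map appearing there is multiplicative and compatible with divided powers, so that the whole assertion is reduced to the single generators $\tau_i$. Concretely, by Lemma~\ref{lem:inducingstrat} (applied with $\sF=\sO_X$, suppressing the factor $x$) the stratification sends $1\otimes\tau^{\{I\}}$ to the image of $\tau^{\{I\}}$ under the morphism induced by $f\otimes g\mapsto(1\otimes g)\otimes(1\otimes f)$. This assignment is an $\sO_X$-algebra homomorphism $\sO_{X^{(2)}_{/S}}\to\sP_{X/S}^{(m)}\otimes\sP_{X/S}^{(m)}$ carrying the diagonal ideal into the $m$-PD ideal of the target; hence, by the universal property of the $m$-PD envelope recalled in~\ref{thm:envelope}, the induced morphism $\epsilon$ on the $m$-PD envelopes is an $m$-PD morphism. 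In particular it is multiplicative and commutes with the operations $(\,\cdot\,)^{\{k\}}$, and since $\tau^{\{I\}}=\prod_j\tau_j^{\{i_j\}}$ we get $\epsilon(1\otimes\tau^{\{I\}})=\prod_j\bigl(\epsilon(1\otimes\tau_j)\bigr)^{\{i_j\}}$.

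Next I would compute $\epsilon(1\otimes\tau_i)$ directly. Writing $\tau_i=1\otimes t_i-t_i\otimes1$ and applying the rule termwise gives $(1\otimes t_i)\otimes1-1\otimes(1\otimes t_i)$; expressing $1\otimes t_i=t_i+\tau_i$ in the left basis of $\sP_{X/S}^{(m)}=\sO_X\{\tau\}$ and moving the scalar $t_i$ across the tensor product, the two ``coordinate'' terms cancel and one obtains
\[
\epsilon(1\otimes\tau_i)=\tau_i\otimes1-1\otimes\tau_i,
\]
which is indeed the case $I=\one_i$ of the desired formula. This step, namely keeping track of the left and right $\sO_X$-structures on $\sP_{X/S}^{(m)}$ and of the resulting identifications inside $\sP_{X/S}^{(m)}\otimes\sP_{X/S}^{(m)}$, is the only delicate point and the place where a sign could be lost; I expect it to be the main obstacle.

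Finally I would combine the two previous steps. Since $\tau_j\otimes1$ and $1\otimes\tau_j$ commute and lie in the $m$-PD ideal, the $m$-PD binomial formula \cite[1.3.6]{Berthelot:DMAI}, together with the fact that $x\mapsto x\otimes1$ and $x\mapsto1\otimes x$ are $m$-PD morphisms, gives
\[
\bigl(\tau_j\otimes1-1\otimes\tau_j\bigr)^{\{i_j\}}=\sum_{l=0}^{i_j}\qbinom{i_j}{l}\,\tau_j^{\{l\}}\otimes(-\tau_j)^{\{i_j-l\}}.
\]
Multiplying these over $j$ and regrouping, the coefficients combine into $\qbinom{I}{V}$ and the tensor factors into $\tau^{\{V\}}\otimes(-\tau)^{\{I-V\}}$ with $V=(l_1,\dots,l_n)$, which yields exactly the stated formula. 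Everything here is a routine expansion once $\epsilon(1\otimes\tau_i)$ has been pinned down.
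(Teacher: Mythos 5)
Your argument is correct and follows essentially the same route as the paper: both read the stratification off Lemma \ref{lem:inducingstrat}, compute that $1\otimes\tau_i$ goes to $\tau_i\otimes 1-1\otimes\tau_i$, use that the induced map is an $m$-PD morphism to conclude $1\otimes\tau^{\{I\}}\mapsto(\tau\otimes1-1\otimes\tau)^{\{I\}}$, and then expand with the $m$-PD binomial formula exactly as in the proof of Proposition \ref{thm:Pm}. Your explicit appeal to the universal property of the $m$-PD envelope to justify compatibility with the operations $(\,\cdot\,)^{\{k\}}$ only makes explicit what the paper leaves implicit.
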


\begin{proof}
By Lemma \ref{lem:inducingstrat}, the hyper $m$-PD stratification is induced by 
\[
    \sO_{X^{(2)}_{/S}}\otimes_{\sO_X}\sO_{X^{(2)}_{/S}} \to \sO_{X^{(2)}_{/S}}\otimes_{\sO_X}\sO_{X^{(2)}_{/S}};\quad 
    (1\otimes 1)\otimes(f\otimes g)\mapsto (1\otimes g)\otimes(1\otimes f).
\]
Noting that $(t_i\otimes 1)\otimes(1\otimes 1)=(1\otimes 1)\otimes(t_i\otimes 1)$, we see that this morphism sends $(1\otimes 1)\otimes(t_i\otimes 1-1\otimes t_i)$ to
\[
(t_i\otimes 1-1\otimes t_i)\otimes(1\otimes 1)-(1\otimes 1)\otimes(t_i\otimes 1-1\otimes t_i),
\]
therefore the hyper $m$-PD stratification sends $1\otimes\tau^{\{I\}}$ to
\[
(\tau\otimes 1-1\otimes \tau)^{\{I\}}.
\]
Now, we may calculate as in the proof of Proposition \ref{thm:Pm} to complete the proof.
\end{proof}

Similarly, starting from the simplicial scheme $P_{X/S}^{(m)}(\bullet +1)$ over $X$,
we construct a DGA $L\sP_{X/S}^{(m)}(\bullet):=P_{X/S}^{(m)}(\bullet +1)$ over $\sO_X$ and
its sub-DGA
\begin{equation}
    \label{eqn:LNP}
    LN\sP_{X/S}^{(m),\bullet}:=\bigcap_{i=1}^{r+2}\Ker({s_{r+1}^i}^*)
\end{equation}
Note that the differential morphism $d^r\colon L\sP_{X/S}^{(m)}(r)\to L\sP_{X/S}^{(m)}(r+1)$ is by definition
\begin{equation}
    \label{eqn:diff_of_LP}
d^r=\sum_{i=1}^{r+2}(-1)^{i+1}{d_{r+1}^i}^{\ast}.
\end{equation}

By Proposition \ref{thm:linearization}, each $L\sP_{X/S}^{(m)}(r)=L_X^{(m)}(\sP_{X/S}^{(m)}(r))$ has a hyper $m$-PD stratification.

\begin{lemma}
    \label{lem:hpd_of_P}
    For each natural number $r$, the differential morphism $d^r\colon L\sP_{X/S}^{(m)}(r)\to L\sP_{X/S}^{(m)}(r+1)$
    is compatible with the hyper $m$-PD stratification on both sides.
\end{lemma}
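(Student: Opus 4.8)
The plan is to reduce the statement to the compatibility of each coface separately, and then to split the cofaces according to whether or not they involve the linearization variable.

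First I would recall that, by Proposition~\ref{thm:linearization}~(i), the $\sO_X$-module $L\sP_{X/S}^{(m)}(r)=\sP_{X/S}^{(m)}(r+1)$ underlies the $m$-crystal $L^{(m)}(\sP_{X/S}^{(m)}(r))$, and that the stratification in the statement is precisely the one carried by this linearization. Since the differential is the $\sO_X$-linear combination $d^r=\sum_{i=1}^{r+2}(-1)^{i+1}{d_{r+1}^i}^{\ast}$ of cofaces, all having the same source $L\sP_{X/S}^{(m)}(r)$ and target $L\sP_{X/S}^{(m)}(r+1)$, and since compatibility with a fixed pair of stratifications is an additive (indeed $\sO_X$-linear) condition on such maps, it is enough to prove that each coface ${d_{r+1}^i}^{\ast}$ with $1\le i\le r+2$ is compatible with the stratifications.

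Next I would dispose of the cofaces with $i\ge 2$. Under the identification $L\sP_{X/S}^{(m)}(r)=\sP_{X/S}^{(m)}\otimes\sP_{X/S}^{(m)}(r)$ in which the first factor is the linearization variable (the $0$-th copy), these cofaces leave the $0$-th copy untouched and operate only on the remaining factors, so that ${d_{r+1}^i}^{\ast}=\id_{\sP_{X/S}^{(m)}}\otimes{d_r^{i-1}}^{\ast}$ for the coface ${d_r^{i-1}}^{\ast}\colon\sP_{X/S}^{(m)}(r)\to\sP_{X/S}^{(m)}(r+1)$ one level below, which is $\sO_X$-linear. Hence ${d_{r+1}^i}^{\ast}$ is, on the $X$-component, the image $L^{(m)}({d_r^{i-1}}^{\ast})$ of an $\sO_X$-module morphism; being then a morphism of $m$-crystals, it is compatible with the stratifications by Proposition~\ref{thm:crystal}.

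The remaining coface ${d_{r+1}^1}^{\ast}$, the only one interacting with the linearization variable, is the crux. Here I would show that it comultiplies the $0$-th copy, i.e. that it factors as ${d_1^1}^{\ast}\otimes\id$ followed by the canonical identification $\sP_{X/S}^{(m)}\otimes\sP_{X/S}^{(m)}(r)\xrightarrow{\sim}\sP_{X/S}^{(m)}(r+1)$ that inserts the new factor in front of the inner ones. This is exactly the shape of the morphism treated in the final proposition of Subsection~\ref{ss:linearization}, applied to the hyper $m$-PD differential operator given by this identification, so that proposition yields the desired compatibility. The main obstacle is precisely to verify this factorization---that is, to match the cosimplicial coface ${d_{r+1}^1}^{\ast}$ with the comultiplication ${d_1^1}^{\ast}$ defining the stratification---for which I would rely on the explicit description of the linearization stratification recorded in Lemma~\ref{lem:inducingstrat} and Proposition~\ref{thm:hpd_of_Pm}. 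Combining the two cases, every coface in the alternating sum is compatible, hence so is $d^r$.
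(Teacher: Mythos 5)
Your argument is correct, but it takes a genuinely different route from the paper's. The paper's proof is a single uniform observation: by Lemma~\ref{lem:inducingstrat} the stratification on $L\sP_{X/S}^{(m)}(r)$ is induced by the explicit map $(1\otimes 1)\otimes(f\otimes g\otimes h_1\otimes\dots\otimes h_r)\mapsto(1\otimes g\otimes h_1\otimes\dots\otimes h_r)\otimes(1\otimes f)$, which only manipulates the $0$-th tensor slot, while every coface ${d_{r+1}^i}^{\ast}$ with $i\geq 1$ (including $i=1$) inserts its degenerate factor away from that slot; hence each coface commutes with the stratification on the nose and no case distinction is needed. You instead split the cofaces into the $\sO_X$-linear ones ($i\geq 2$), handled by functoriality of $L^{(m)}$ and Proposition~\ref{thm:crystal}, and the comultiplication coface $i=1$, handled by the factorization through ${d_1^1}^{\ast}\otimes\id$ and the final proposition of Subsection~\ref{ss:linearization} on linearized hyper $m$-PD differential operators. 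Both decompositions are valid, and your identification of ${d_{r+1}^1}^{\ast}$ with $(\id\otimes u)\circ({d_1^1}^{\ast}\otimes\id)$ is the correct one; what your route buys is the conceptual explanation (the linearized differential is horizontal because it is the linearization of a differential operator, as in the classical theory), at the cost of invoking a proposition whose proof the paper omits and of a factorization that still has to be checked by essentially the same explicit computation the paper performs directly. The paper's version is shorter and self-contained given Lemma~\ref{lem:inducingstrat}; note also that its stated index range $i=1,\dots,r+1$ appears to be a typo for $i=1,\dots,r+2$, which your bookkeeping gets right.
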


\begin{proof}
    By Lemma \ref{lem:inducingstrat}, the hyper $m$-PD stratification on $L\sP_{X/S}^{(m)}(r)$ is induced by
    \[ 
    \begin{tikzpicture}
        \matrix (m) [matrix of math nodes,
                     row sep=0.5em, column sep=2em, text height=1.5ex, text depth=0.25ex]
                     { 
                         \sO_{X^{(2)}_{/S}}\otimes_{\sO_X}\sO_{X^{(r+2)}_{/S}} & \sO_{X^{(r+2)}_{/S}}\otimes_{\sO_X}\sO_{X^{(2)}_{/S}}\\
                     (1\otimes 1)\otimes (f\otimes g\otimes h_1\otimes\dots\otimes h_r) & (1\otimes g\otimes h_1\otimes\dots\otimes h_r)\otimes (1\otimes f). \\
                     };
        \path[->] (m-1-1) edge (m-1-2);
        \path[|->] (m-2-1) edge (m-2-2);
    \end{tikzpicture}
    \]
    The morphism ${d_{r+1}^i}^*$ for $i=1,\dots,r+1$ is therefore compatible with this morphism,
    and so is $d^r$ because of its definition (\ref{eqn:diff_of_LP}).
\end{proof}

%
%
%
%
%
%

\section{Higher Poincar\'e Lemma --- Local Results.}

\subsection{Higher de Rham complex.}
\label{ss:higherdR}

Throughout Section 2, we fix an $m$-PD scheme $(S,\fa,\fb,\gamma)$ and a smooth $S$-scheme $X$
that has global coordinates $t=(t_1,\dots,t_n)$; precisely,
we assume that there exists an \'etale morphism $g\colon X\to \bA_S^n=\Spec \sO_S[t'_1,\dots,t'_n]$ over $S$ and 
we write the $t_i=g^{\ast}(t'_i)$ for $i=1,\dots,n$.

First, in this subsection, we introduce the ``higher de Rham complex'' in this situation.
Recall from (\ref{eqn:NP}) and (\ref{eqn:LNP}) the definition of $N\sP_{X/S}^{(m),\bullet}$
and $LN\sP_{X/S}^{(m),\bullet}$

Let $\sK_{X/S}^{(m),\bullet}$ be the DG-ideal of $N\sP_{X/S}^{(m),\bullet}$ generated by $\tau^{\{I\}}\in N\sP_{X/S}^{(m),1}$'s, where $I$ runs through all multi-indices
in $\bN^n\setminus\{0,p^m\one_1,\dots,p^m\one_n\}$.
It is easy to show that the ideal $\sP_{X/S}^{(m)}\otimes\sK_{X/S}^{(m),\bullet}$ of $LN\sP_{X/S}^{(m),\bullet}$
is a DG-ideal.

\begin{definition}
    The {\em higher de Rham complex} $\hO_{X/S}^{(m),\bullet}$ is by definition
    the quotient of $N\P^{(m),\bullet}_{X/S}$ by the DG-ideal $\sK_{X/S}^{(m),\bullet}$ above.

The {\em linearized higher de Rham complex} $L\hO^{(m),\bullet}_{X/S}$ is by definition the quotient
of $LN\P^{(m)}_{X/S}$ by $\sP_{X/S}^{(m)}\otimes\sK_{X/S}^{(m),\bullet}$.
$\square$
\label{def:higherdR}
\end{definition}

It should be remarked that these complexes essentially depend on the choice of the system of global coordinates on $X$.
This construction therefore can not be generalized to the global situation.

For $i=1,\dots,n$, the image of $\tau_i^{p^m}$ under the natural surjection $N\sP_{X/S}^{(m),1}\to\hO_{X/S}^{(m),1}$ is denoted by $\bar{\tau}_i^{p^m}$.

\begin{proposition}

    {\rm (i)} $\hO_{X/S}^{(m),1}$ is a free $\sO_X$-module of rank $n$ with basis $\big\{\bar{\tau}_j^{p^m}\big\}_{j=1,\dots,n}$.

    {\rm (ii)} $\hO_{X/S}^{(m),\bullet}$ is isomorphic to the exterior algebra of $\hO_{X/S}^{(m),1}$ as a graded $\sO_X$-module.

    {\rm (iii)} The differential map $d^r$ sends the section $\tau^{\{I\}}\otimes\bar{\tau}_{j_1}^{p^m}\wedge\dots\wedge\bar{\tau}_{j_r}^{p^m}$
    of $L\hO^{(m),r}_{X/S}$, where $I=(i_1,\dots,i_n)\in\bN^n$, to the section
    \begin{equation}
    \sum_{\substack{j=1,\dots,n\\ i_j\geq p^m}}\qbinom{i_j}{p^m}\tau^{\{I-p^m\one_j\}}\otimes\bar{\tau}_j^{p^m}\wedge\bar{\tau}_{j_1}^{p^m}\wedge\dots\wedge\overline{\tau}_{j_r}^{p^m}.
    \label{eqn:diff_sent}
    \end{equation}
\label{prop:structure}
\end{proposition}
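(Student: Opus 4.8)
The plan is to pin down the whole conormalized differential graded algebra $N\sP_{X/S}^{(m),\bullet}$ and then read off the quotient by $\sK_{X/S}^{(m),\bullet}$. Write $J:=\ker(\sP_{X/S}^{(m)}\to\sO_X)$ for the augmentation ideal, which is free on $\{\tau^{\{I\}}\}_{I\neq 0}$. For (i) I would note that in simplicial degree $1$ the relevant codegeneracy is the augmentation, so $N\sP_{X/S}^{(m),1}=J$, and that $\sK_{X/S}^{(m),1}$ is simply the $\sO_X$-span of its degree-$1$ generators $\tau^{\{I\}}$ ($I\neq 0,p^m\one_j$); the quotient is therefore free on the classes of the surviving generators $\tau^{\{p^m\one_j\}}=\tau_j^{p^m}$, i.e. on $\{\bar{\tau}_j^{p^m}\}$, giving (i).

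For (ii) the key preliminary is the identification $N\sP_{X/S}^{(m),\bullet}\cong T_{\sO_X}(J)$, the tensor algebra on $J$ placed in degree $1$: concretely $N\sP_{X/S}^{(m),r}=J^{\otimes r}$ inside $\sP_{X/S}^{(m)}(r)=(\sP_{X/S}^{(m)})^{\otimes r}$ (tensors over $\sO_X$), with the differential-graded-algebra product being concatenation. The codegeneracies into degree $1$ are the two augmentations, ``kill the first'' and ``kill the second'' factor, as one checks from the cofaces computed in the proof of Proposition \ref{thm:Pm} via the cosimplicial identities; the intersection of all codegeneracy kernels is then $J^{\otimes r}$. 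In particular $N\sP_{X/S}^{(m),\bullet}$ is generated in degree $1$, so $\hO_{X/S}^{(m),r}$ is spanned by the images of the monomials $\tau^{\{I_1\}}\otimes\cdots\otimes\tau^{\{I_r\}}$, and since every tensor factor with $I_k\neq p^m\one_j$ dies modulo $\sK$, it is already spanned by the products $\bar{\tau}_{j_1}^{p^m}\cdots\bar{\tau}_{j_r}^{p^m}$.

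The exterior relations are forced by $\sK_{X/S}^{(m),\bullet}$ being a \emph{differential} ideal. Because $\tau^{\{p^m(\one_a+\one_b)\}}$ and $\tau^{\{2p^m\one_a\}}$ are among the generators of $\sK$, their differentials lie in $\sK$; computing these by Proposition \ref{thm:Pm} and noting that, of all the terms $\tau^{\{V\}}\otimes\tau^{\{I-V\}}$, only those with both $V$ and $I-V$ equal to some $p^m\one$ survive modulo $\sK$, one obtains in $\hO_{X/S}^{(m),2}$ the identities
\[
\bar{\tau}_a^{p^m}\bar{\tau}_b^{p^m}+\bar{\tau}_b^{p^m}\bar{\tau}_a^{p^m}=0,\qquad \qbinom{2p^m}{p^m}\,(\bar{\tau}_a^{p^m})^2=0 .
\]
Since $\qbinom{2p^m}{p^m}$ is a unit — its $p$-adic valuation vanishes, there being no carry in $p^m+p^m$ when $p$ is odd, and the single carry at $p=2$ being cancelled by the denominator $\mbinom{2p^m}{p^m}=2$ — these give anticommutativity and $(\bar{\tau}_j^{p^m})^2=0$, in all degrees. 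Hence $\hO_{X/S}^{(m),r}$ is spanned by the $\binom{n}{r}$ ordered products $\bar{\tau}_{j_1}^{p^m}\cdots\bar{\tau}_{j_r}^{p^m}$ ($j_1<\cdots<j_r$). Finally there is an algebra surjection $\hO_{X/S}^{(m),\bullet}\to\bigwedge^\bullet\hO_{X/S}^{(m),1}$ sending these products to the corresponding basis wedges; as the target is free of rank $\binom{n}{r}$ in degree $r$ and the source is generated by $\binom{n}{r}$ elements mapping onto a basis, the surjection is an isomorphism, proving (ii). For (iii) I would repeat the computation one degree higher: the linearized differential (\ref{eqn:diff_of_LP}) applied to $\tau^{\{I\}}\otimes\bar{\tau}_{j_1}^{p^m}\wedge\cdots\wedge\bar{\tau}_{j_r}^{p^m}$ is evaluated by the analogue of Proposition \ref{thm:Pm}, and after reduction modulo $\sP_{X/S}^{(m)}\otimes\sK$ only the terms splitting a single $p^m\one_j$ off $I$ remain, contributing the coefficient $\qbinom{I}{p^m\one_j}=\qbinom{i_j}{p^m}$ and moving $\bar{\tau}_j^{p^m}$ into the wedge — exactly (\ref{eqn:diff_sent}).

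The step I expect to be the main obstacle is the bookkeeping behind $N\sP_{X/S}^{(m),r}=J^{\otimes r}$ with concatenation product, i.e. verifying that conormalizing this particular cosimplicial algebra returns the tensor algebra on the augmentation ideal; once that is established, the differential-ideal structure of $\sK$ and the unit $\qbinom{2p^m}{p^m}$ make the rest essentially formal.
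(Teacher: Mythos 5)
Your proposal is correct and follows essentially the same route as the paper: identify $N\sP_{X/S}^{(m),\bullet}$ with the tensor algebra on the augmentation ideal, read off that $\hO_{X/S}^{(m),r}$ is spanned by the products $\bar{\tau}_{j_1}^{p^m}\cdots\bar{\tau}_{j_r}^{p^m}$, extract the anticommutativity and square-zero relations from $d^1$ of the degree-one generators of $\sK_{X/S}^{(m),\bullet}$ using the invertibility of $\qbinom{2p^m}{p^m}$ (the paper's Lemma \ref{lem:invert}), and deduce (iii) directly from Proposition \ref{thm:Pm}. Your additional surjection-plus-rank-count argument for (ii) and the explicit carry-counting for $\qbinom{2p^m}{p^m}$ are only minor elaborations of what the paper asserts more tersely.
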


\begin{proof}
The $\sO_X$-module $\hO^{(m),r}_{X/S}$ is generated by the sections
\[
\bar{\tau}_{j_1}^{p^m}\otimes\bar{\tau}_{j_2}^{p^m}\otimes\dots\otimes\bar{\tau}_{j_r}^{p^m},
\]
and their relations are generated by
\[
\sum_{\substack{0<V<I\\ V=p^m\one_i, I-V=p^m\one_j}}\qbinom{I}{V}\,\bar{\tau}_i^{p^m}\otimes\bar{\tau}_j^{p^m}=0.
\]
For $I=2p^m\one_i$, this gives
\[
\bar{\tau}_i^{p^m}\otimes\bar{\tau}_i^{p^m}=0
\]
because the coefficient $\qbinom{2p^m}{p^m}$ is invertible by Lemma \ref{lem:invert} below.
Next, for $I=p^m\one_i+p^m\one_j$ with $i\neq j$, it gives
\[
\bar{\tau}_i^{p^m}\otimes\bar{\tau}_j^{p^m}+\bar{\tau}_j^{p^m}\otimes\bar{\tau}_i^{p^m}=0.
\]
For the other $I$'s, the given relations are trivial,
which shows (i) and (ii).
Now, (iii) is a direct consequence of Proposition \ref{thm:Pm}.
\end{proof}

\begin{lemma}
	If $i\geq p^m$, the number $\qbinom{i}{p^m}$ is congruent to $1$ modulo $p$.
	\label{lem:invert}
\end{lemma}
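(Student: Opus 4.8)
The plan is to unwind the two refined binomial symbols and reduce the statement to an elementary congruence between products of integers. Write $i=qp^m+s$ with $q=\lfloor i/p^m\rfloor$ and $0\le s<p^m$; since $i\ge p^m$ we have $q\ge 1$. First I would evaluate $\mbinom{i}{p^m}$ directly from the definition: the three relevant floors are $q$, $\lfloor p^m/p^m\rfloor=1$ and $\lfloor (i-p^m)/p^m\rfloor=q-1$, so $\mbinom{i}{p^m}=q!/(1!\,(q-1)!)=q$, and therefore $\qbinom{i}{p^m}=\binom{i}{p^m}/q$. Thus the assertion to prove is exactly that the number $\binom{i}{p^m}/q$ lies in $1+p\bZ_{(p)}$.

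Next I would expand $\binom{i}{p^m}=\frac{1}{p^m!}\prod_{j=0}^{p^m-1}(i-j)$ and observe that the block of $p^m$ consecutive integers $B=\{i,i-1,\dots,i-p^m+1\}$ contains a unique multiple of $p^m$, namely $qp^m$. Pulling this factor out of the numerator and the factor $p^m$ out of $p^m!=p^m\,(p^m-1)!$ cancels the $q$ coming from $\mbinom{i}{p^m}$, and leaves
\[
\qbinom{i}{p^m}=\frac{\prod_{b\in B\setminus\{qp^m\}}b}{(p^m-1)!},
\]
an expression in which numerator and denominator are both products of $p^m-1$ integers. The heart of the argument is to match these factor by factor. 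Since $B$ is a complete residue system modulo $p^m$, the set $B\setminus\{qp^m\}$ represents precisely the nonzero residues, so $b\mapsto l:=(b\bmod p^m)$ is a bijection of $B\setminus\{qp^m\}$ onto $\{1,\dots,p^m-1\}$. The key point is that for such a pair, with $0<l<p^m$, the congruence $b\equiv l\pmod{p^m}$ forces not merely $b\equiv l\pmod p$ but also $v_p(b)=v_p(l)$, because $v_p(l)\le m-1<m$. Writing $l=p^e l'$ and $b=p^e b'$ with $p\nmid l',\,b'$, the relation $p^{m-e}\mid b'-l'$ together with $m-e\ge 1$ gives $b'/l'\equiv 1\pmod p$; hence every matched ratio is a $p$-adic unit congruent to $1$, and so is their product $\qbinom{i}{p^m}$.

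I expect the only delicate step to be this final matching: one must keep track of $p$-adic valuations rather than residues mod $p$ alone, since several factors on each side are divisible by $p$, and it is exactly the observation that $b\equiv l\pmod{p^m}$ with $0<l<p^m$ pins the valuation down that makes the term-by-term cancellation legitimate. The degenerate case $m=0$ needs no work, as then $p^m-1=0$, the displayed product is empty, and $\qbinom{i}{1}=1$ outright.
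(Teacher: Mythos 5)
Your proof is correct and follows essentially the same route as the paper's: both reduce $\qbinom{i}{p^m}$ to $\binom{i}{p^m}/q$ and then match each factor of the numerator block $\{i,i-1,\dots,i-p^m+1\}$ (minus the unique multiple $qp^m$ of $p^m$) with its residue in $\{1,\dots,p^m-1\}$, observing that each ratio is $\equiv 1 \pmod p$ in $\bZ_{(p)}$. Your write-up merely makes explicit two points the paper leaves implicit, namely the computation $\mbinom{i}{p^m}=q$ and the valuation bookkeeping behind the congruence $(p^mq+k)/k\equiv 1\pmod p$.
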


\begin{proof}
Put $i=p^mq+r$ with $q$ a natural number and $0\leq r<p^m$.
Then, we have $\qbinom{i}{p^m}=(1/q)\binom{i}{p^m}$, and this equals
\[
\frac{1}{q}\frac{p^mq+r}{r}\frac{p^mq+r-1}{r-1}\dots\frac{p^mq+1}{1}\frac{p^mq}{p^m}\frac{p^mq-1}{p^m-1}\dots\frac{p^mq-p^m+r+1}{r+1}.
\]
For $0<k<p^m$, the number $(p^mq+k)/k$ is congruent to $1$ modulo $p$.
This shows the lemma.
\end{proof}

\subsection{Formal Higher Poincar\'e lemma.}
\label{ss:formalPL}

Now, we establish the Poincar\'e lemma for the higher de Rham complex
in the local situation specified in the top of this section.

\begin{lemma}
\label{lem:formal}
The linearized higher de Rham complex $L\hO^{(m),\bullet}_{X/S}$ is a resolution of 
the direct sum of $p^{mn}$ copies of $\sO_X$. More strongly, if $h\colon T\to X$ is a morphism of $S$-schemes,
the $\sO_T$-linear map
\[
\iota'\colon \bigoplus_{I\in\fB_n^{(m)}}\sO_Te_I\to h^*\left(L\hO^{(m),\bullet}_{X/S}\right);~e_I\mapsto h^*(\tau^{\{I\}})
\]
is a quasi-isomorphism, where $\fB_n^{(m)}$ denotes the set of elements $I=(i_1,\dots,i_n)\in\bN^n$ such that
$i_j\leq p^m$ for all $j=1,\dots,n$.
\end{lemma}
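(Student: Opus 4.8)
The plan is to exhibit $L\hO^{(m),\bullet}_{X/S}$ as a tensor product, over $\sO_X$, of $n$ elementary two-term complexes, for which the assertion---and, crucially, its compatibility with the base change $h$---becomes transparent.

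By Proposition \ref{prop:structure} the graded $\sO_X$-module $L\hO^{(m),\bullet}_{X/S}=\sP^{(m)}_{X/S}\otimes\hO^{(m),\bullet}_{X/S}$ is free on the sections $\tau^{\{I\}}\otimes\bar\tau^{p^m}_{j_1}\wedge\dots\wedge\bar\tau^{p^m}_{j_r}$, and its differential is (\ref{eqn:diff_sent}). First I would record the two factorisations $\sP^{(m)}_{X/S}=\bigotimes_{j=1}^n\sO_X\{\tau_j\}$ and $\hO^{(m),\bullet}_{X/S}=\bigotimes_{j=1}^n\bigwedge^\bullet\bigl(\sO_X\bar\tau_j^{p^m}\bigr)$, the latter since the exterior algebra of a direct sum is the tensor product of the exterior algebras of the summands. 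Inspecting (\ref{eqn:diff_sent}) then shows that the differential is exactly the total differential of $\bigotimes_{j=1}^nC_j^\bullet$, where $C_j^\bullet$ is the two-term complex
\[
C_j^\bullet\colon\ \sO_X\{\tau_j\}\xrightarrow{\ \partial_j\ }\sO_X\{\tau_j\}\otimes\sO_X\bar\tau_j^{p^m},\qquad \partial_j\bigl(\tau_j^{\{i\}}\bigr)=\begin{cases}\qbinom{i}{p^m}\,\tau_j^{\{i-p^m\}}\otimes\bar\tau_j^{p^m}&(i\geq p^m),\\ 0&(i<p^m).\end{cases}
\]
The only delicate point here is the matching of the Koszul signs, which the wedge notation in (\ref{eqn:diff_sent}) already encodes.

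Next I would analyse a single factor. By Lemma \ref{lem:invert} every coefficient $\qbinom{i}{p^m}$ with $i\geq p^m$ is a unit, so $\partial_j$ annihilates the $\tau_j^{\{i\}}$ with $i<p^m$ and carries the free submodule $B_j:=\bigoplus_{i\geq p^m}\sO_X\tau_j^{\{i\}}$ \emph{isomorphically} onto $C_j^1$. With $A_j:=\ker\partial_j=\bigoplus_{0\leq i<p^m}\sO_X\tau_j^{\{i\}}$, this gives a decomposition of complexes
\[
C_j^\bullet\ \cong\ \bigl(A_j\text{ in degree }0\bigr)\ \oplus\ \bigl[\,B_j\xrightarrow{\ \sim\ }C_j^1\,\bigr]
\]
in which every term is free over $\sO_X$ and the second summand is contractible.

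The advantage of this form is that it is preserved by pullback. Since $h^*$ is additive, commutes with $\otimes_{\sO_X}$, and sends isomorphisms to isomorphisms, applying it yields $h^*C_j^\bullet\cong(h^*A_j\text{ in degree }0)\oplus(\text{contractible})$. Distributing $h^*\bigl(\bigotimes_jC_j^\bullet\bigr)=\bigotimes_jh^*C_j^\bullet$ over these direct sums, any summand containing a contractible factor is itself contractible---tensoring a contractible bounded complex of $\sO_T$-modules with anything is contractible---hence acyclic; only $\bigotimes_jh^*A_j$, concentrated in degree $0$, survives. Since $\bigotimes_jA_j=\bigoplus_{I\in\fB_n^{(m)}}\sO_X\tau^{\{I\}}$, this surviving part is precisely the image of $\iota'$, so $\iota'$ is a quasi-isomorphism; taking $h=\id_X$ gives the first assertion. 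I expect the genuine difficulty to be exactly this base-change step: a quasi-isomorphism over $X$ need not survive the non-exact functor $h^*$, and the remedy is to upgrade it, by means of Lemma \ref{lem:invert}, to the explicit free direct-sum (equivalently, homotopy) decomposition above, which $h^*$ manifestly respects.
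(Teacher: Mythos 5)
Your proof is correct and follows essentially the same route as the paper's: both factor the complex as a tensor product of $n$ one-variable two-term complexes and reduce everything to the invertibility of $\qbinom{i}{p^m}$ from Lemma \ref{lem:invert}. Your explicit splitting of each factor into a degree-zero piece plus a contractible summand is just a slightly more detailed justification of the paper's remark that freeness of the terms lets the quasi-isomorphism survive the tensor product and the pullback along $h$.
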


\begin{proof}
When $n=1$ we have to show that the sequence
\[
0\to \bigoplus^{p^m-1}_{i=0}\sO_Te_i\to \sO_T\{\tau\}_{(m)}\to \sO_T\{\tau\}_{(m)}\bar{\tau}^{p^m}\to 0
\]
is exact, where $\tau$ here denotes $h^\ast(\tau_1)$;
the second morphism sends $\tau^{\{i\}}$ to zero if $i<p^m$ and to $\qbinom{i}{p^m}\bar{\tau}^{p^m}$ if $i\geq p^m$.
Since $\qbinom{i}{p^m}$ is invertible by Lemma \ref{lem:invert}, the exactitude follows.

For an arbitrary $n$, the morphism $\iota'$ is the tensor product of that for the $(n-1)$-dimensional case
and that for the $1$-dimensional case. Since each term of these complexes is free, the proof is obtained by induction on $n$.
\end{proof}

The following proposition is a direct consequence of Lemma \ref{lem:formal}.

\begin{proposition}
	\label{thm:formal}
    Using the isomorphism
    \[
    \beta\colon \bigoplus_{I\in\fB_n^{(m)}}\sO_Xe_I\to \bigoplus_{I\in\fB_n^{(m)}}\sO_Xe_I;
    \hspace{8pt} e_I\mapsto \sum_{0\leq J\leq I}\binom IJ t^Je_{I-J},
    \]
    we define the morphism
	\[
    \iota:=\iota'\circ\beta^{-1} \colon \bigoplus_{I\in\fB_n^{(m)}}\sO_Xe_I\to L\hO^{(m),\bullet}_{X/S}.
	\]
    Then, this is a quasi-isomorphism.
\end{proposition}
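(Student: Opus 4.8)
The plan is to deduce this as an immediate formal consequence of Lemma \ref{lem:formal} together with the invertibility of $\beta$. First I would specialize Lemma \ref{lem:formal} to the identity morphism $h=\id_X\colon X\to X$. This yields that the $\sO_X$-linear map
\[
\iota'\colon \bigoplus_{I\in\fB_n^{(m)}}\sO_Xe_I\to L\hO^{(m),\bullet}_{X/S};\quad e_I\mapsto \tau^{\{I\}},
\]
with the source regarded as a complex concentrated in degree $0$, is a quasi-isomorphism. This is exactly the content of the lemma in the case $T=X$, $h=\id_X$.

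Next I would verify that $\beta$ is an automorphism of the finite free $\sO_X$-module $\bigoplus_{I\in\fB_n^{(m)}}\sO_Xe_I$. The point is that $\beta$ is unipotent. It does map this module to itself, since for $0\leq J\leq I$ with $I\in\fB_n^{(m)}$ one has $I-J\leq I$ componentwise and hence $I-J\in\fB_n^{(m)}$. Ordering the finite index set $\fB_n^{(m)}$ by total degree $|I|=i_1+\dots+i_n$, the summand $J=0$ contributes $\binom{I}{0}t^0e_I=e_I$, whereas every summand with $J>0$ contributes a multiple of $e_{I-J}$ with $|I-J|<|I|$. Thus $\beta=\id+N$, where $N$ strictly decreases total degree; hence $N$ is nilpotent and $\beta$ is invertible, with inverse the finite sum $\sum_{k\geq 0}(-N)^k$.

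Finally, since $\beta^{-1}$ is an isomorphism of the degree-$0$ complex $\bigoplus_{I\in\fB_n^{(m)}}\sO_Xe_I$ and $\iota'$ is a quasi-isomorphism, the composite $\iota=\iota'\circ\beta^{-1}$ is again a quasi-isomorphism, which is the assertion.

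I do not expect any genuine obstacle here; the only step that asks for a word of justification is the invertibility of $\beta$, and that is immediate from its triangular (unipotent) shape. Let me emphasize that the specific twist by $\beta$ is irrelevant to the quasi-isomorphism property itself, which $\iota'$ already enjoys. Its purpose is presumably to endow $\iota$ with a further compatibility — with the hyper $m$-PD stratification on $L\hO^{(m),\bullet}_{X/S}$ coming from Proposition \ref{thm:linearization} — that will be exploited in the globalization; for the present statement only the invertibility of $\beta$ matters.
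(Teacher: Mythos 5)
Your proposal is correct and matches the paper, which simply records Proposition \ref{thm:formal} as a direct consequence of Lemma \ref{lem:formal}: since $\beta$ is an automorphism (unipotent with respect to the total-degree filtration, as you verify), precomposing the quasi-isomorphism $\iota'$ with $\beta^{-1}$ yields a quasi-isomorphism. Your closing remark about the purpose of the twist by $\beta$ is also on target --- it is exactly what makes $\iota$ compatible with the hyper $m$-PD stratifications in Lemma \ref{lem:horizontal_1}.
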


\subsection{Higher Poincar\'e lemma.}
\label{ss:higherPL}

In this subsection, we ``lift'' the results in the previous subsection to the $m$-crystalline site.

Set
\[
\sF:=\bigoplus_{I\in\fB_n^{(m)}}\sO_Xe_I,
\]
and equip it the hyper $m$-PD stratification
\[
    \sP_{X/S}^{(m)}\otimes_{\sO_X}\sF\to \sF\otimes_{\sO_X}\sP_{X/S}^{(m)};\hspace{3pt}1\otimes e_I\mapsto e_I\otimes 1.
\]
Let $F$ denote the $m$-crystal corresponding, by Proposition \ref{thm:crystal}, to the module $\sF$ and this hyper $m$-PD stratification.
$F$ is obviously isomorphic to the direct sum of $p^{mn}$ copies of the structure sheaf $\sO_{X/S}^{(m)}$.

\begin{lemma}
    The $\sO_X$-linear morphism $\iota\colon\sF\to L\hO_{X/S}^{(m),0}=\sP_{X/S}^{(m)}$ defined in Proposition \ref{thm:formal} is compatible with the
	hyper $m$-PD stratifications on both sides.
	\label{lem:horizontal_1}
\end{lemma}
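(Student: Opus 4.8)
The plan is to reduce the compatibility statement to a horizontality property of the individual sections $\iota(e_I)$, and then to identify these sections explicitly. Compatibility with the stratifications means precisely that the square built from the stratification of $\sF$ (which sends $1\otimes e_I\mapsto e_I\otimes 1$), the stratification of $\sP_{X/S}^{(m)}=L\hO_{X/S}^{(m),0}$ described in Proposition \ref{thm:hpd_of_Pm}, and the two maps $\id\otimes\iota$ and $\iota\otimes\id$, commutes. All four maps are linear over the structure sheaf of the relevant product (acting through the appropriate projection), and the source $\sP_{X/S}^{(m)}\otimes\sF$ is generated as such a module by the sections $1\otimes e_I$; hence it suffices to check commutativity on these generators. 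Tracing $1\otimes e_I$ around the square, one is reduced to proving that each $\iota(e_I)$ is a \emph{horizontal} section of $\sP_{X/S}^{(m)}$, i.e.\ that the stratification of Proposition \ref{thm:hpd_of_Pm} sends $1\otimes\iota(e_I)$ to $\iota(e_I)\otimes 1$.

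Next I would compute $\iota(e_I)$ explicitly. Since $\iota=\iota'\circ\beta^{-1}$ with $\iota'(e_K)=\tau^{\{K\}}$ and $\beta(e_K)=\sum_{0\le J\le K}\binom KJ t^J e_{K-J}$, inverting the multi-index binomial transform $\beta$ yields
\[
\iota(e_I)=\sum_{0\le J\le I}(-1)^{|J|}\binom IJ t^J\,\tau^{\{I-J\}}.
\]
The crucial simplification is that for $I\in\fB_n^{(m)}$ every component of $I-J$ is $<p^m$, so that each $\tau^{\{I-J\}}$ is an ordinary power $\tau^{I-J}$. The multi-binomial sum then collapses: using $\tau_j=t_j\otimes 1-1\otimes t_j$ one has, componentwise, $\bigl((t_j\otimes1-1\otimes t_j)-t_j\otimes1\bigr)=-\,1\otimes t_j$, and therefore $\iota(e_I)=\pm(1\otimes t)^I$, the monomial in the coordinates $1\otimes t_1,\dots,1\otimes t_n$ coming from the second projection.

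Finally I would verify horizontality of such monomials directly from the explicit form of the stratification given in Lemma \ref{lem:inducingstrat}, namely the map induced by $(1\otimes 1)\otimes(f\otimes g)\mapsto(1\otimes g)\otimes(1\otimes f)$: this map fixes exactly the sections pulled back along the second projection, so that $1\otimes(1\otimes t)^I\mapsto (1\otimes t)^I\otimes 1$. This produces $\iota(e_I)\otimes 1$ on the nose and closes the square. I expect the main obstacle to lie in the middle step: correctly tracking the two $\sO_X$-algebra structures on $\sP_{X/S}^{(m)}$ coming from the two projections, together with the sign of $\tau_j$, so that the collapsed sum is genuinely the second-projection monomial and so that these monomials are recognized as the horizontal sections. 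Once these conventions are pinned down, the collapse and the horizontality check are both immediate.
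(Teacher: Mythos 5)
Your proof is correct, and at bottom it runs on the same computation as the paper's, but it is organized around a different reduction. The paper checks commutativity of the square on the basis $\{1\otimes\beta(e_I)\}_{I\in\fB_n^{(m)}}$ of $\sP_{X/S}^{(m)}\otimes\sF$, for which $1\otimes\iota$ gives simply $1\otimes\tau^{I}$, and then matches the two composites via the identity $\sum_{0\le J\le L}\binom{L}{J}(t\otimes 1)^{L-J}(-\tau)^{J}=(1\otimes t)^{L}$. You instead check on the basis $\{1\otimes e_I\}$, reduce to the horizontality of the sections $\iota(e_I)$, invert $\beta$ by the alternating-sign binomial transform, and collapse the sum to $\iota(e_I)=(-1)^{|I|}(1\otimes t)^{I}$; these second-projection monomials are visibly fixed by the stratification of Lemma \ref{lem:inducingstrat}. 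The collapse $\bigl((t\otimes 1)-\tau\bigr)^{I}=(1\otimes t)^{I}$ is literally the same identity the paper uses, so neither route is shorter; what yours buys is the explicit observation that $\beta$ is precisely the change of basis making $\iota$ carry basis vectors to horizontal sections, which the paper leaves implicit. Two details to tighten: for $I\in\fB_n^{(m)}$ the components of $I-J$ can equal $p^m$ rather than being $<p^m$, but since $\tau_j^{\{p^m\}}=\gamma_1(\tau_j^{p^m})=\tau_j^{p^m}$ the identification $\tau^{\{I-J\}}=\tau^{I-J}$ still holds; and the $\sO_X$-linearity of $\iota$ is with respect to the left structure on $\sP_{X/S}^{(m)}$, so the coefficient $t^J$ in your formula for $\iota(e_I)$ must be read as $(t\otimes 1)^J$ --- which is indeed what your componentwise cancellation uses, so the sign and the conclusion come out right.
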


\begin{proof}
This lemma states that the diagram
\[
\begin{tikzpicture}[description/.style={fill=white,inner sep=2pt}]
    \matrix (m) [matrix of math nodes,
                 row sep=3em, column sep=3.5em, text height=1.5ex, text depth=0.4ex]
        { \sP_{X/S}^{(m)}\otimes \sF & \sP_{X/S}^{(m)}\otimes\sP_{X/S}^{(m)} \\
          \sF\otimes\sP_{X/S}^{(m)} & \sP_{X/S}^{(m)}\otimes\sP_{X/S}^{(m)} \\
        };
    \path[->,font=\scriptsize]
        (m-1-1) edge node[auto] {$1\otimes\iota$} (m-1-2)
                edge (m-2-1)
        (m-1-2) edge (m-2-2)
        (m-2-1) edge node[auto] {$\iota\otimes 1$} (m-2-2);
\end{tikzpicture}
\]
is commutative, where the right vertical morphism is calculated by using Proposition \ref{thm:hpd_of_Pm}.
In order to prove the commutativity, let us consider the section
\begin{equation}
	\label{eqn:new_basis}
	1\otimes\left(\sum_{0\leq J\leq I}\binom IJt^Je_{I-J}\right)
\end{equation}
of $\P_{X/S}^{(m)}\otimes\sF$ for each $I\in\fB_n^{(m)}$; when $I$ runs through all elements in $\fB_n^{(m)}$,
the sections (\ref{eqn:new_basis}) form a $\P_{X/S}^{(m)}$-basis of this module.
This section (\ref{eqn:new_basis}) is sent by $1\otimes\iota$ to $1\otimes\tau^{I}$, and its image in the bottom is
\[
\sum_{0\leq J\leq I}\binom IJ\tau^{I-J}\otimes(-\tau)^J,
\]
which is the image by $\iota\otimes 1$ of
\begin{equation}
	\label{eqn:section_1}
    \sum_{0\leq J\leq I}\binom{I}{\,J\,}\left\{\sum_{0\leq K\leq I-J}\binom{I-J}{K}t^Ke_{I-J-K}\right\}\otimes (-\tau)^J
\end{equation}
(note that $\qbinom IV=\binom IV$ and that $\tau^{\{V\}}=\tau^V$ etc.\ because $I\in\fB_n^{(m)}$).
On the other hand, (\ref{eqn:new_basis}) goes down to the section
\begin{equation}
	\label{eqn:section_2}
\sum_{0\leq J\leq I}\binom IJ e_{I-J}\otimes t^J.
\end{equation}
Therefore, the problem is showing that the two sections (\ref{eqn:section_1}) and (\ref{eqn:section_2}) are identical.
Now, the section (\ref{eqn:section_1}) equals
\[
\sum_{0\leq L\leq I}\binom{I}{\,L\,}e_{I-L}\left\{\sum_{0\leq J\leq L}\binom{L}{\,J\,}(t\otimes 1)^{L-J}(-\tau)^J\right\}
\]
by changing the variables as $L=J+K$.
The section of $\sP^{(m)}_{X/S}$ in the braces is the canonical image of the section
\[
\sum_{0\leq J\leq L}\binom{L}{\,J\,}(t\otimes 1)^{L-J}(1\otimes t-t\otimes 1)^{J}=(1\otimes t)^L
\]
of $\sO_{X^{(2)}_{/S}}$. This completes the proof.
\end{proof}

Now, we are ready to prove the higher Poincar\'e lemma.

\begin{theorem}
    \label{thm:resolution}
	Let $M$ be an $\sO_{X/S}^{(m)}$-module.
    Then, $M\otimes_{\sO^{(m)}_{X/S}} L^{(m)}(\hO^{(m),\bullet}_{X/S})$ forms a complex of $\sO_{X/S}^{(m)}$-modules
    that resolves the direct sum of $p^{mn}$ copies of $M$.
\end{theorem}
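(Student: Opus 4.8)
The plan is to lift the formal quasi-isomorphism of Proposition~\ref{thm:formal} to the $m$-crystalline topos and then to spread it out over an arbitrary coefficient module $M$ by testing exactness thickening by thickening. First I would record that $L^{(m)}(\hO_{X/S}^{(m),\bullet})$ is genuinely a complex of $m$-crystals. Each term $L^{(m)}(\hO_{X/S}^{(m),r})$ is an $m$-crystal by Proposition~\ref{thm:linearization}~(i), with $L_X^{(m)}(\hO_{X/S}^{(m),r})=L\hO_{X/S}^{(m),r}$; by Lemma~\ref{lem:hpd_of_P}, passed to the quotient defining $L\hO_{X/S}^{(m),\bullet}$, the differentials are horizontal, so under Proposition~\ref{thm:crystal} they correspond to morphisms of $m$-crystals. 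Thus $L^{(m)}(\hO_{X/S}^{(m),\bullet})$ is the complex of $m$-crystals attached to the stratified complex $L\hO_{X/S}^{(m),\bullet}$, and its value on the trivial thickening $(X,X,0)$ is $L\hO_{X/S}^{(m),\bullet}$ itself. By Lemma~\ref{lem:horizontal_1} the augmentation $\iota$ is horizontal as well, so it upgrades to a morphism of complexes of $m$-crystals $\iota\colon F\to L^{(m)}(\hO_{X/S}^{(m),\bullet})$, where $F\cong(\sO_{X/S}^{(m)})^{\oplus p^{mn}}$.

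What must be shown is then that $\id_M\otimes\iota\colon M\otimes F\to M\otimes L^{(m)}(\hO_{X/S}^{(m),\bullet})$ is a quasi-isomorphism; since $M\otimes F\cong M^{\oplus p^{mn}}$, this is exactly the asserted resolution. Exactness of a complex of $\sO_{X/S}^{(m)}$-modules may be tested on every object $(U,T,J,\delta)$ of $\Crism(X/S)$, and it is a Zariski-local question on $T$. Because $X$ is smooth over $S$ and $U\hookrightarrow T$ is a locally nilpotent closed immersion, Zariski-locally on $T$ the map $U\to X$ lifts to a retraction $g\colon T\to X$; hence it suffices to verify exactness on thickenings that carry such a retraction.

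On such a thickening, the isomorphism~(\ref{eqn:restrictedbasechange}) together with the description of the linearization in Proposition~\ref{thm:linearization} identifies the value at $(U,T,J,\delta)$ of the augmented complex $[\,F\xrightarrow{\iota}L^{(m)}(\hO_{X/S}^{(m),\bullet})\,]$ with the pullback $g^*[\,\sF\xrightarrow{\iota}L\hO_{X/S}^{(m),\bullet}\,]$. Now the augmented complex $[\,\sF\to L\hO_{X/S}^{(m),\bullet}\,]$ over $\sO_X$ is exact by Proposition~\ref{thm:formal} (equivalently Lemma~\ref{lem:formal} with $h=\id$), its terms are free $\sO_X$-modules, and it is bounded, being concentrated in degrees $-1,0,\dots,n$; a bounded exact complex of free modules is split exact. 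Split exactness is preserved by the additive functor $g^*$ and then by $M_{(U,T)}\otimes_{\sO_T}-$, and the latter tensor product computes the value at $(U,T)$ of $M\otimes L^{(m)}(\hO_{X/S}^{(m),\bullet})$, and of $M\otimes F$ on the augmentation term. This yields exactness on each such thickening, hence globally.

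I expect the main obstacle to be the passage from the formal to the crystalline statement: namely, verifying that $\iota$ and all the differentials are honestly morphisms of $m$-crystals—so that evaluating at $(U,T,J,\delta)$ and pulling back along the retraction $g$ is legitimate—and correctly matching the value of the linearized complex on a thickening with retraction via~(\ref{eqn:restrictedbasechange}). The reason a general, non-crystalline coefficient module $M$ forces the split-exactness observation is that Lemma~\ref{lem:formal} only supplies base change along ring maps $h\colon T\to X$, whereas tensoring with the arbitrary $\sO_T$-module $M_{(U,T)}$ requires the stronger fact that the bounded free resolution splits.
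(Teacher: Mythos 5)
Your proposal is correct and follows essentially the same route as the paper: establish that the differentials and the augmentation $\iota$ are horizontal (Lemma \ref{lem:hpd_of_P}, Lemma \ref{lem:horizontal_1}) so the complex lives in $(X/S)\crism$, reduce to thickenings admitting a retraction to $X$, and then conclude from Proposition \ref{thm:formal} together with the (local) freeness of the terms that tensoring with $M$ preserves the quasi-isomorphism. Your explicit remark that a bounded exact complex of free modules is split exact is just the spelled-out version of the paper's phrase ``it remains quasi-isomorphic because each term is locally free.''
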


\begin{proof}
    First, we have a complex $L^{(m)}(\hO_{X/S}^{(m),\bullet})$ of $\sO_{X/S}^{(m)}$-modules
that gives $L\hO_{X/S}^{(m),\bullet}$ on $X$;
each term is, in fact, calculated by Proposition \ref{thm:linearization} (i), and
    each differential of $L\hO_{X/S}^{(m),\bullet}$ is compatible with
    the hyper $m$-PD stratification on each term by Lemma \ref{lem:hpd_of_P}.
    Moreover, Lemma \ref{lem:horizontal_1} ensures the $\sO_{X/S}^{(m)}$-linear map $F\to L^{(m)}(\hO_{X/S}^{(m),0})$,
    whose composition with the differential map $L^{(m)}(\hO_{X/S}^{(m),0})\to L^{(m)}(\hO_{X/S}^{(m),1})$ is zero by Proposition \ref{thm:formal}.
    By tensoring $M$ over $\sO_{X/S}^{(m)}$, we get a morphism
\begin{equation}
    M\otimes F \to M\otimes L^{(m)}(\hO_{X/S}^{(m),\bullet}).
\label{eq:q-isom}
\end{equation}
We show that this is a quasi-isomorphism.  

It suffices to argue on each $m$-PD thickening $(U,T,J,\delta)$ in $\Crism(X/S)$,
and then the assertion is local on $T$.
We may therefore assume that there exists an $S$-morphism $h\colon T\to X$ compatible with $i\colon U\hookrightarrow T$.
Then on $T$, the map (\ref{eq:q-isom}) is written as
\[
    \bigoplus_{I\in\fB_n^{(m)}} M_T\,e_I\to M_T\otimes_{\sO_T} h^*\left(L\hO_{X/S}^{(m),\bullet}\right).
\]
Now, Proposition \ref{thm:formal} shows that
\[
    \bigoplus_{I\in\fB_n^{(m)}} \sO_T\,e_I = \sO_T\otimes_{\sO_X} \sF\to h^*\left(L\hO_{X/S}^{(m),\bullet}\right)
\]
is quasi-isomorphic, and even after $M_T$ is tensored, 
it remains quasi-isomorphic because each term is locally free.
This shows the assertion.
\end{proof}

\begin{corollary}\label{thm:poincare}
    Let $(S,\fa,\fb,\gamma)$ be an $m$-PD scheme and $X$ a smooth $S$-scheme that has global coordinates
    \footnote{This assumption, which we have always kept in this section,
    is written here just for the convienience of the reader.}.
	Let $E$ be an $m$-crystal in $\sO_{X/S}^{(m)}$-modules.
Then, there exists an isomorphism in the derived category
\[
    \left(\bR{u_{X/S}^{(m)}}_*(E)\right)^{\oplus p^{mn}}\to E_X\otimes_{\sO_X} \hO^{(m),\bullet}_{X/S}.
\]
\end{corollary}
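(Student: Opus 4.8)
The plan is to reduce the global statement to the local resolution already established in Theorem \ref{thm:resolution} and then apply the general machinery of the linearization functor $L^{(m)}$. First I would observe that the complex $\hO^{(m),\bullet}_{X/S}$, regarded as a complex of $\sO_X$-modules, has each term $\hO^{(m),r}_{X/S}$ free of finite rank over $\sO_X$ by Proposition \ref{prop:structure} (i)--(ii). Applying the linearization functor termwise and tensoring with the $m$-crystal $E$ produces a complex $E\otimes L^{(m)}(\hO^{(m),\bullet}_{X/S})$ of $\sO_{X/S}^{(m)}$-modules in the $m$-crystalline topos. Taking $M=E$ in Theorem \ref{thm:resolution}, this complex is a resolution of the direct sum of $p^{mn}$ copies of $E$, i.e.\ we obtain a quasi-isomorphism $E^{\oplus p^{mn}}\to E\otimes L^{(m)}(\hO^{(m),\bullet}_{X/S})$.

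Next I would apply $\bR{u_{X/S}^{(m)}}_*$ to both sides. On the left, $\bR{u_{X/S}^{(m)}}_*(E^{\oplus p^{mn}})\cong (\bR{u_{X/S}^{(m)}}_*E)^{\oplus p^{mn}}$ since $\bR{u_{X/S}^{(m)}}_*$ commutes with finite direct sums. On the right, the key computation is to identify $\bR{u_{X/S}^{(m)}}_*$ of the linearized complex with the honest de Rham-like complex on $X$. Here I would use Proposition \ref{thm:linearization} (ii), which gives $R{u_{X/S}^{(m)}}_*L^{(m)}(\sF)=\sF$ for any $\sO_X$-module $\sF$, together with the projection-formula-type isomorphism of Proposition \ref{thm:linearization} (iii), namely $E\otimes L^{(m)}(\sF)\cong L^{(m)}(E_X\otimes\sF)$. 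Combining these for $\sF=\hO^{(m),r}_{X/S}$ in each degree yields $R{u_{X/S}^{(m)}}_*\bigl(E\otimes L^{(m)}(\hO^{(m),r}_{X/S})\bigr)\cong E_X\otimes\hO^{(m),r}_{X/S}$, and these identifications are compatible with the differentials because the differentials of the linearized complex come, by Lemma \ref{lem:hpd_of_P}, from stratification-compatible maps that linearize the differentials of $\hO^{(m),\bullet}_{X/S}$.

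I expect the main technical obstacle to be the passage from the termwise isomorphisms furnished by Proposition \ref{thm:linearization} to an isomorphism of complexes in the derived category, that is, the verification that the identifications $R{u_{X/S}^{(m)}}_*\bigl(E\otimes L^{(m)}(\hO^{(m),r}_{X/S})\bigr)\cong E_X\otimes\hO^{(m),r}_{X/S}$ assemble into a morphism of complexes rather than merely a degreewise isomorphism. This requires checking that the differential in the linearized higher de Rham complex, described explicitly in Proposition \ref{prop:structure} (iii), is carried to the differential of $E_X\otimes\hO^{(m),\bullet}_{X/S}$ under these identifications; the naturality of the isomorphism in Proposition \ref{thm:linearization} (iii) with respect to morphisms of $\sO_X$-modules, applied to the differential maps, is what makes this work. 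Once the comparison is a genuine isomorphism of complexes in the derived category, composing it with the quasi-isomorphism of the first paragraph gives the desired isomorphism
\[
\left(\bR{u_{X/S}^{(m)}}_*(E)\right)^{\oplus p^{mn}}\xrightarrow{\;\sim\;} E_X\otimes \hO^{(m),\bullet}_{X/S}
\]
in the derived category, completing the proof.
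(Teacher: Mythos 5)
Your proposal follows essentially the same route as the paper: apply $\bR{u_{X/S}^{(m)}}_*$ to the quasi-isomorphism of Theorem \ref{thm:resolution} with $M=E$, identify the right-hand side termwise using Proposition \ref{thm:linearization} (ii) and (iii), and then check compatibility with the differentials. The one point where your justification goes astray is the last step: you invoke ``naturality of Proposition \ref{thm:linearization} (iii) with respect to morphisms of $\sO_X$-modules, applied to the differential maps,'' but the differentials of $\hO^{(m),\bullet}_{X/S}$ are not $\sO_X$-linear (already in degree zero, $d^0(t_i)=\tau_i$ is a differential operator, not an $\sO_X$-linear map), and the differentials of $L\hO^{(m),\bullet}_{X/S}$ are linearizations of hyper $m$-PD differential operators rather than of $\sO_X$-linear maps, so that naturality does not apply as stated. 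The paper instead observes that the termwise identification $\bR{u_{X/S}^{(m)}}_*\bigl(E\otimes L^{(m)}(\hO^{(m),r}_{X/S})\bigr)\cong E_X\otimes\hO^{(m),r}_{X/S}$ is realized via the map ${d^0}^*(E_X\otimes\hO_{X/S}^{(m),\bullet})\to \P_{X/S}^{(m)}\otimes E_X\otimes\hO_{X/S}^{(m),\bullet}$, and deduces the compatibility of the two differentials from the simplicial identities among the ${d_r^i}^*$; replacing your naturality appeal with this computation closes the gap.
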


\begin{proof}
The previous theorem, after the functor $\bR{u_{X/S}^{(m)}}_*$ is applied, shows that it suffices to prove that
\[
\bR{u_{X/S}^{(m)}}_*\left(E\otimes L^{(m)}(\hO_{X/S}^{(m),\bullet})\right)=E_X\otimes\hO_{X/S}^{(m),\bullet},
\]
and we know from Proposition \ref{thm:linearization} (ii) and (iii) that this is true as graded $\sP^{(m)}_{X/S}$-modules.
Since this identification is via the map
\[
{d^0}^*(E_X\otimes\hO_{X/S}^{(m),\bullet})\to \P_{X/S}^{(m)}\otimes E_X\otimes\hO_{X/S}^{(m),\bullet},
\]
we see that the differential maps on both sides coincide because of the relations of $d_i^*$'s. 
\end{proof}

\section{Finiteness of Cohomology.}

\subsection{Cohomological Boundedness.}
\label{ss:application}

Now, we are going to use the higher Poincar\'e lemma to prove the boundedness,
the base change and the finiteness of the crystalline cohomology of level $m$.

\begin{theorem}
\label{thm:bounded}
Let $(S,\fa,\fb,\gamma)$ be an $m$-PD scheme, $(\fa_0, \fb_0, \gamma_0)$ be
a quasi-coherent $m$-PD sub-ideal of $\fa$.
Let $X$ be a smooth scheme over $S_0$, and assume that the structure morphism $f\colon X\to S$ is quasi-compact and quasi-separated;
moreover, we assume that $S$ is quasi-separated.
Then, for each quasi-coherent $\sO_{X/S}^{(m)}$-module $E$ and each natural number $i$,
the $\sO_S$-module $R^i{f_{X/S}^{(m)}}_*E$ is quasi-coherent.
Moreover, there exists an integer $r$ such that $R^i{f_{X/S}^{(m)}}_*E=0$ for all $i>r$ and for all quasi-coherent $\sO_{X/S}^{(m)}$-module $E$.
\end{theorem}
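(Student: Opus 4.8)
The guiding idea is to reduce the global finiteness statement about $R^i{f_{X/S}^{(m)}}_*E$ to a statement about ordinary Zariski (or de Rham) cohomology, where quasi-coherence and boundedness are classical. The key input is the higher Poincar\'e lemma, which lets us replace the crystalline pushforward by something computed from the higher de Rham complex. First I would reduce to the case where $X$ has global coordinates: since the question is local on $S$ and local on $X$ (the assertion on $R^i{f_{X/S}^{(m)}}_*E$ being compatible with restriction to opens by quasi-compactness and quasi-separatedness), I would cover $X$ by finitely many opens $X_\alpha$ each admitting local coordinates, and run a \v{C}ech/Mayer--Vietoris spectral sequence for the cover. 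On each $X_\alpha$ the higher de Rham complex $\hO^{(m),\bullet}_{X_\alpha/S}$ is defined and, by Proposition \ref{prop:structure}(ii), is a bounded complex of finite free $\sO_{X_\alpha}$-modules concentrated in degrees $0,\dots,n$ (where $n=\dim X/S$).

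\textbf{Main reduction.}
Using Corollary \ref{cor:cryst_equiv}, I would first replace the $m$-crystalline site of $X$ over $S$ by that of $X$ over $S_0$, so that the hypotheses of Corollary \ref{thm:poincare} apply. For a quasi-coherent $E$ which is additionally an $m$-crystal, the higher Poincar\'e lemma (Corollary \ref{thm:poincare}) gives, on each coordinatized open, an isomorphism in the derived category
\[
\left(\bR{u_{X/S}^{(m)}}_*(E)\right)^{\oplus p^{mn}}\to E_X\otimes\hO^{(m),\bullet}_{X/S}.
\]
Composing with $Rf_{\Zar *}$ and noting $f_{X/S}^{(m)}=f_{\Zar}\circ u_{X/S}^{(m)}$, the right-hand side is the Zariski hypercohomology of a \emph{bounded} complex of quasi-coherent $\sO_X$-modules on a quasi-compact quasi-separated $X$ over a quasi-separated $S$. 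Quasi-coherence of the $R^i$ and their vanishing for $i$ large then follow from the standard theory for quasi-coherent cohomology of qcqs morphisms, the bound $r$ coming from $n$ plus the cohomological dimension of $f_{\Zar}$ on quasi-coherent sheaves (finite by qcqs-ness), uniformly in $E$.

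\textbf{Removing the crystal and coherence hypotheses.}
The delicate point is that the statement is asserted for an \emph{arbitrary} quasi-coherent $\sO_{X/S}^{(m)}$-module $E$, whereas the higher Poincar\'e lemma needs $E$ to be an $m$-crystal (and the local de Rham description is per-coordinate-patch, hence not globally defined). To handle a general quasi-coherent $E$, I would resolve or approximate it using the linearization functor $L^{(m)}$: by Proposition \ref{thm:linearization}, $L^{(m)}(\sF)$ is an $m$-crystal for any $\sO_X$-module $\sF$, and there is a canonical linearization resolution of $E$ by objects of the form $L^{(m)}(\,\cdot\,)$ (the standard crystalline construction, via the simplicial scheme $P^{(m)}_{X/S}(\bullet)$). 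Such a resolution reduces the general case, via a spectral sequence, to the case $E=L^{(m)}(\sF)$ with $\sF$ quasi-coherent on $X$; for these, Proposition \ref{thm:linearization}(ii) gives $R{u_{X/S}^{(m)}}_* L^{(m)}(\sF)=\sF$ outright, so $R^i{f_{X/S}^{(m)}}_*L^{(m)}(\sF)=R^if_{\Zar *}\sF$, which is quasi-coherent and vanishes beyond the Zariski cohomological dimension. The uniformity of $r$ is then inherited because the linearization resolution has a length controlled by $n$.

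\textbf{Where the difficulty lies.}
I expect the main obstacle to be the globalization: the higher de Rham complex is only defined locally (it depends on the chosen coordinates, as the excerpt stresses), so the clean identification via Corollary \ref{thm:poincare} is not available globally and must be glued through the \v{C}ech spectral sequence for a finite coordinate cover. The bookkeeping needed to produce a \emph{single} integer $r$ working for all quasi-coherent $E$ simultaneously — combining the de Rham degree $n$, the \v{C}ech degree of the cover, and the Zariski cohomological dimension of $f$ — is where care is required; the quasi-coherence itself is comparatively formal once the boundedness is in place. A secondary technical point is verifying that passing to the linearization resolution interacts correctly with the pushforward and that the resulting double-complex spectral sequence degenerates into the desired finite bounds.
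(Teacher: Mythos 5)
Your proposal follows essentially the same route as the paper: reduce to opens admitting local coordinates (where a smooth lift over $S$ exists, so that Corollary \ref{thm:poincare} together with Proposition \ref{thm:lift} identifies the crystalline pushforward with the Zariski hypercohomology of the bounded, locally free higher de Rham complex), and then globalize by the \v{C}ech spectral sequence for a finite cover. The only differences are minor: the paper handles non-separated $X$ by a two-step induction (intersections of affines are quasi-affine, hence separated), and it does not carry out the linearization-resolution step you propose for quasi-coherent $E$ that are not crystals, citing Corollary \ref{thm:poincare} directly instead.
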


\begin{proof}
    First, if $X$ can be lifted to a smooth $S$-scheme having local coordinates, then Corollary \ref{thm:poincare} and Proposition \ref{thm:lift} show the assertions.

Next, we assume that $X$ is separated. Let $U=(U_i)$ be a finite covering
by affine open subschemes of $X$ which have local coordinates.
For each natural number $\nu$, put
\[
U_{(\nu)}:=\coprod_{i_0<\dots<i_{\nu}}U_{i_0}\cap\dots\cap U_{i_{\nu}},
\]
and let $j_{(\nu)}$ denote the natural map $U_{(\nu)}\to X$.
Then \cite[7.6]{Berthelot-Ogus:NCC}, there exists a spectral sequence
\[
E_1^{p,q}=R^q{f^{(m)}_{U_{(p)}/S}}_\ast j_{(p)}^\ast(E)\Longrightarrow R^{n}{f^{(m)}_{X/S}}_\ast E,
\]
by which the assertions are reduced to those for each $U_{i_0}\cap\dots\cap U_{i_p}$.
These schemes in turn are affine and have local coordinates by assumption,
therefore the proof is finished in this case.

At last, for general $X$, we repeat the same argument;
the schemes $U_{i_0}\cap\dots\cap U_{i_p}$ constructed as above are not necessarily affine, but are quasi-compact and quasi-affine, therefore separated.
\end{proof}

\subsection{Base Change Theorem.}

\begin{theorem}
\label{thm:basechange}
Let $u\colon(S',\a',\b',\gamma')\to(S,\a,\b,\gamma)$ be a morphism of $m$-PD schemes,
$Y$ \resp{$Y'$} a scheme over $S$ \resp{$S'$} and $h\colon Y'\to Y$ an $S$-morphism.
We assume that $Y$ is quasi-compact.
Let $f\colon X\to Y$ be a smooth morphism, and let $f'\colon X'\to Y'$ denote the base change of $f$ over $h$,
and let $g$ denote the projection $X'\to X$.
Then, if $E$ is a flat and quasi-coherent $\sO_{X/S}^{(m)}$-module, there exists an isomorphism
\[
\bL h\crism^*R f\crism_*(E)\to R f'\crism_* g\crism^*(E).
\]
\end{theorem}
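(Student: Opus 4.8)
The plan is to reduce the crystalline base change assertion to the ordinary flat base change for quasi-coherent sheaves, using the higher Poincar\'e lemma to replace $E$ by the linearized higher de Rham complex, whose terms are tractable. First I would note that both the formation of the base change morphism and the question of whether it is an isomorphism are local on $Y$. Since $Y$ is quasi-compact I cover it by finitely many affine opens; a \v{C}ech/Mayer--Vietoris spectral sequence, exactly as in the proof of Theorem \ref{thm:bounded} and compatible with the base change morphism, then reduces me to the case in which $Y$ is affine and, after a further finite affine covering of $X$, the smooth morphism $f$ admits relative coordinates $t_1,\dots,t_n$. The relative dimension $n$ is preserved by the base change $X'=X\times_Y Y'$, so the numerical factor $p^{mn}$ occurring below is the same on both sides.

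Next, Theorem \ref{thm:resolution} provides, in $D((X/S)\crism)$, a functorial quasi-isomorphism from $E^{\oplus p^{mn}}$ to the linearized higher de Rham complex $E\otimes L^{(m)}(\hO^{(m),\bullet}_{X/S})$, whose terms are the linearizations $E\otimes L^{(m)}(\hO^{(m),r}_{X/S})$. Because this resolution is functorial it is compatible with $g\crism^\ast$, and since $E$ is flat its pullback $g\crism^\ast E$ is again flat, so the corresponding resolution of $(g\crism^\ast E)^{\oplus p^{mn}}$ on the primed side is obtained by applying $g\crism^\ast$ termwise, no higher Tor intervening. Via the hypercohomology spectral sequence, the base change morphism for $E$ is therefore an isomorphism provided it is one for each linearized term $E\otimes L^{(m)}(\hO^{(m),r}_{X/S})$.

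For the linearized terms I would use the formalism of Subsection \ref{ss:linearization}, adapted to the relative situation: after evaluation on an $m$-PD thickening $(V,T,J,\delta)$ of $Y$, the crystalline direct images $Rf\crism_\ast\big(E\otimes L^{(m)}(\mathscr{G})\big)$ of a linearized module are computed by an explicit quasi-coherent $\sO_T$-module built from $E$ and $\mathscr{G}$ on $X_T:=X\times_Y T$, the higher direct images vanishing and the zeroth being explicit (the relative analogue of Proposition \ref{thm:linearization}(ii)). Here flatness of $E$ is what guarantees both the vanishing and that the zeroth commutes with further base change. Since $X'=X\times_Y Y'$ and, for a thickening $(V',T')$ of $Y'$ lying over $(V,T)$, one has $X'_{T'}=X_T\times_T T'$ with the coordinates pulling back, the resulting quasi-coherent sheaves are related by the flat base change isomorphism for the Cartesian square $X_T\to T$, $X'_{T'}\to T'$. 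Term by term the base change morphism is thus this flat base change isomorphism, hence an isomorphism.

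The main obstacle is precisely the relative linearization computation of the previous paragraph: establishing, for a general flat (not necessarily crystalline) module $E$, that $Rf\crism_\ast\big(E\otimes L^{(m)}(\mathscr{G})\big)$ is acyclic in positive degrees and represented by the expected quasi-coherent sheaf, and that this representation is compatible both with the crystalline base change morphism and with the derived pullback $\bL h\crism^\ast$. It is flatness of $E$ that makes this possible, by killing the Tor--terms that would otherwise obstruct the identification and by ensuring that a single complex of flat quasi-coherent sheaves simultaneously represents $\bL h\crism^\ast Rf\crism_\ast E$ and $Rf'\crism_\ast g\crism^\ast E$. The local reduction and the gluing of the thickening-wise descriptions into the global base change morphism are routine given the spectral-sequence formalism already employed for Theorem \ref{thm:bounded}.
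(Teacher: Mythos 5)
Your proposal is correct and follows essentially the same route as the paper: reduce, via the classical argument, to checking the base change thickening-by-thickening over the $m$-PD base (the paper isolates this as Proposition \ref{thm:wbc}), localize to the affine case with compatible coordinates, and use the higher Poincar\'e lemma (Theorem \ref{thm:resolution} and Corollary \ref{thm:poincare}) together with flatness to replace both sides by the Zariski direct image of $E_X\otimes\hO^{(m),\bullet}_{X/S}$, where ordinary flat base change applies. The only difference is organizational: you argue term-by-term on the linearized complex, whereas the paper applies Corollary \ref{thm:poincare} wholesale; the content of the ``relative linearization computation'' you flag as the main obstacle is exactly what that corollary packages.
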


\[
\begin{tikzpicture}
    \matrix (m) [matrix of math nodes,
                 row sep=1em, column sep=1.5em, text height=1.5ex, text depth=0.25ex]
         { X' & & X \\
              & \square & \\
           Y' & & Y \\[0.5em]
           (S',\fa',\fb',\gamma') & & (S,\fa,\fb,\gamma) \\
         };
    \path[->,font=\scriptsize]
        (m-1-1) edge node[auto] { $g$ } (m-1-3)
                edge node[auto,swap] { $f'$ } (m-3-1)
        (m-1-3) edge node[auto] { $f$ } (m-3-3)
        (m-3-1) edge node[auto] { $h$ } (m-3-3)
                edge (m-4-1)
        (m-3-3) edge (m-4-3)
        (m-4-1) edge (m-4-3);
\end{tikzpicture}
\]

\begin{proof}
Because $\bR{f\crism}_*(E)$ is bounded by Theorem \ref{thm:bounded}, the complex in the left hand side makes sense.
Then, we may construct the base change morphism using the adjunction formula \cite[V 3.3.1]{Berthelot:CCSCP}.
In order to prove that this morphism is isomorphic, following the argument for classical case \cite[V 3.5.5]{Berthelot:CCSCP}, we know that it suffices to prove the following weaker proposition. 
\end{proof}

\begin{proposition}
\label{thm:wbc}
In the situation in Theorem \ref{thm:basechange}, let $(\fa_0,\fb_0,\gamma_0)$ \resp{$(\fa_0',\fb_0',\gamma_0')$} be a quasi-coherent $m$-PD sub-ideal of $\fa$ \resp{$\fa'$},
and assume that $Y$ \resp{$Y'$} is the closed subscheme $S_0$ \resp{$S'_0$}
of $S$ \resp{$S'$} defined by the ideal $\fa_0$ \resp{$\fa_0'$}.
We rename the morphisms as in the diagram below.
Then, if $E$ is a flat and quasi-coherent $\sO_{X/S}^{(m)}$-module,
the base changing map
\begin{equation}
    \bL u^\ast R{f^{(m)}_{X/S}}_\ast(E)\to R{f'^{(m)}_{X'/S'}}_\ast g\crism^\ast(E)
\label{eq:bcmap}
\end{equation}
is an isomorphism.
\begin{equation*}
    \begin{tikzpicture}
        \matrix (m) [matrix of math nodes, row sep=1em, column sep=1.5em, text height=1.5ex, text depth=0.25ex]
        { 
        X' &[1em] &[1em] X &[1em] \\
          & \square & & \\
        S'_0 & & S_0 & \\[-0.4em]
        & S' & & S \\
        };
    \path[->,font=\scriptsize]
        (m-1-1) edge node[auto] { $g$ } (m-1-3)
                edge node[auto,swap] { $f_0'$ } (m-3-1)
        (m-1-3) edge node[auto,swap] { $f_0$ } (m-3-3)
                edge node[auto] { $f$ } (m-4-4)
        (m-4-2) edge node[auto] { $u$ } (m-4-4)
        (m-3-1) edge node[auto] { $u_0$ } (m-3-3)
        (m-1-1) edge[preaction={draw=white,-,line width=6pt}] node[auto] { $f'$ } (m-4-2);
    \path[right hook->]
        (m-3-1.south east) edge (m-4-2)
        (m-3-3) edge (m-4-4);
    \end{tikzpicture}
\end{equation*}
\end{proposition}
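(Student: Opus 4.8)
The plan is to use the linearized higher de Rham resolution to reduce the crystalline base change to the classical flat base change for quasi-coherent sheaves along a cartesian square of schemes. First I would reduce to a convenient local situation. The morphism (\ref{eq:bcmap}) and the formation of both of its sides are local on $S$ and $S'$ and compatible with shrinking $Y=S_0$; so, exactly as in the proof of Theorem \ref{thm:bounded}, I would pass to a finite affine cover of $X$ by opens that lift to smooth $S$-schemes with local coordinates, and use the \v{C}ech spectral sequence of \cite[7.6]{Berthelot-Ogus:NCC}—together with its pullback on the $X'$-side, which is the base change of the cover on $X$—to reduce to the case where $X$ is affine and admits a smooth lift $\tilde X$ over $S$ with coordinates $t_1,\dots,t_n$. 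Writing $i\colon X\hookrightarrow\tilde X$ for the resulting closed immersion, Proposition \ref{thm:lift} lets me replace $E$ by $\tilde E:={i\crism}_*(E)$, a flat quasi-coherent $\sO_{\tilde X/S}^{(m)}$-module with $R{u_{X/S}^{(m)}}_*E\cong R{u_{\tilde X/S}^{(m)}}_*\tilde E$; the base change of $\tilde X$ by $u$ is a smooth lift $\tilde X'$ of $X'$ carrying the pulled-back coordinates.

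Next I would apply Theorem \ref{thm:resolution} to $\tilde E$: the quasi-isomorphism $\tilde E^{\oplus p^{mn}}\simeq \tilde E\otimes L^{(m)}(\hO^{(m),\bullet}_{\tilde X/S})$ turns the computation of $(R{f_{X/S}^{(m)}}_*E)^{\oplus p^{mn}}$ into that of $R{f_{\tilde X/S}^{(m)}}_*$ applied to the linearized complex. Each term is $\tilde E\otimes L^{(m)}(\hO^{(m),r}_{\tilde X/S})$, and since $\hO^{(m),r}_{\tilde X/S}$ is free of finite rank (Proposition \ref{prop:structure}), Proposition \ref{thm:linearization} computes its crystalline pushforward geometrically, as the Zariski pushforward along $\tilde f\colon\tilde X\to S$ of $\tilde E_{\tilde X}\otimes\hO^{(m),r}_{\tilde X/S}$. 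The same applies verbatim on the primed side, and because $\hO^{(m),\bullet}$ is defined purely from the coordinates and the universal coefficients $\qbinom{i_j}{p^m}$, its formation commutes with $u$, so that $\hO^{(m),\bullet}_{\tilde X'/S'}\cong u^*\hO^{(m),\bullet}_{\tilde X/S}$. Thus, after cancelling the common factor $p^{mn}$, the map (\ref{eq:bcmap}) is identified termwise with the classical base change morphism
\[
\bL u^*R\tilde f_*\big(\tilde E_{\tilde X}\otimes\hO^{(m),\bullet}_{\tilde X/S}\big)\to R\tilde f'_*\big(\tilde g^*\tilde E_{\tilde X}\otimes\hO^{(m),\bullet}_{\tilde X'/S'}\big),
\]
where $\tilde f'\colon\tilde X'\to S'$ and $\tilde g\colon\tilde X'\to\tilde X$ are the evident base changes. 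The flatness of $E$, hence of $\tilde E_{\tilde X}$ over $\sO_S$, makes $\tilde E_{\tilde X}\otimes\hO^{(m),\bullet}_{\tilde X/S}$ a bounded complex of $S$-flat quasi-coherent sheaves, so $\bL u^*$ reduces to the termwise $u^*$ and the displayed arrow is an isomorphism by ordinary flat base change.

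The main obstacle I expect is the compatibility claim in the previous step: one must verify that the crystalline base change morphism constructed abstractly from adjunction (\cite[V 3.3.1]{Berthelot:CCSCP}) is carried, under the three identifications just used—the resolution of Theorem \ref{thm:resolution}, the geometric description of the pushforward of the linearized terms from Proposition \ref{thm:linearization}, and the pullback $g\crism^*$ of $E$ versus the module pullback $\tilde g^*\tilde E_{\tilde X}$—onto the classical de Rham base change map, i.e.\ that all of these fit into one diagram commuting functorially in $u$. A subsidiary technical point is to confirm that flatness of $E$ as an $\sO_{X/S}^{(m)}$-module is preserved by ${i\crism}_*$ and yields genuine $S$-flatness of $\tilde E_{\tilde X}$, which is precisely what is needed to discard the higher derived terms of $\bL u^*$.
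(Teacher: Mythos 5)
Your proposal follows essentially the same route as the paper: reduce to the case of an affine lift with coordinates (via Corollary \ref{cor:cryst_equiv}/Proposition \ref{thm:lift}), use the higher Poincar\'e lemma (Corollary \ref{thm:poincare}) to replace $\bR{f^{(m)}_{X/S}}_\ast(E)^{\oplus p^{mn}}$ by the pushforward of $E_X\otimes\hO^{(m),\bullet}_{X/S}$, exploit flatness and quasi-coherence so that $\bL u^\ast$ is computed termwise and classical base change applies, and then globalize by a \v{C}ech/descent argument. The compatibility of the abstract adjunction morphism with these identifications, which you flag as the main obstacle, is exactly the commutative diagram the paper asserts, so the proposal is correct and matches the paper's proof.
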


\begin{proof}
At first, we assume that $X$ is lifted to a scheme $Y$ which is affine, is smooth of relative dimension $n$ over $S$, and has local coordinates.
Then by Corollary \ref{cor:cryst_equiv}, we may assume that $X=Y$.

Now, by Corollary \ref{thm:poincare}, we have an isomorphism
\begin{equation}
    \begin{tikzpicture}
        \matrix (m) [matrix of math nodes,
                     row sep=3em, column sep=2.5em, text height=1.5ex, text depth=0.25ex]
                     { 
                     \left(\bR{f^{(m)}_{X/S}}_\ast(E)\right)^{\oplus p^{mn}} & 
                     \bR f_\ast(E_X\otimes\hO_{X/S}^{(m),\bullet}), \\
                     };
        \path[->] (m-1-1) edge node[above,inner sep=0.5pt] {$\sim$} (m-1-2);
    \end{tikzpicture}
	\label{eqn:1st}
\end{equation}
and $E':=g\crism^*(E)$ in turn satisfies 
\begin{equation}
    \begin{tikzpicture}
        \matrix (m) [matrix of math nodes,
                     row sep=3em, column sep=2.5em, text height=1.5ex, text depth=0.25ex]
                     { 
                     \left(\bR{f'^{(m)}_{X'/S'}}_\ast(E')\right)^{\oplus p^{mn}} & 
                     \bR f'_\ast(E'_{X'}\otimes\hO_{X'/S'}^{(m),\bullet}). \\
                     };
        \path[->] (m-1-1) edge node[above,inner sep=0.5pt] {$\sim$} (m-1-2);
    \end{tikzpicture}
	\label{eqn:2nd}
\end{equation}

In the right-hand side of (\ref{eqn:1st}) \resp{(\ref{eqn:2nd})}, the functor $\bR f_*$ \resp{$\bR f'_*$} can be replaced by $f_*$ \resp{$f'_*$} because of the quasi-coherence of $E$ \resp{$E'$}.
We then have a commutative diagram
\begin{equation*}
    \begin{tikzpicture}
        \matrix (m) [matrix of math nodes, row sep=3em, column sep=3.5em, text height=1.5ex, text depth=0.8ex]
        { 
        \bL u^*\left(f_*(E_X\otimes\hO^{(m),\bullet}_{X/S})\right) &
        f'_*\left(E'_{X'}\otimes\hO^{(m),\bullet}_{X'/S'}\right) \\
        \left(\bL u^*\bR{f^{(m)}_{X/S}}_*(E)\right)^{\oplus p^{mn}} &
        \left(\bR{f'^{(m)}_{X'/S'}}_*(E')\right)^{\oplus p^{mn}}, \\
        };
        \path[->] (m-1-1) edge node[above,inner sep=0.5pt] {$\sim$} (m-1-2);
        \path[->,font=\scriptsize]
            (m-2-1) edge node[auto] { $\bL u^*(\ref{eqn:1st})$ } (m-1-1)
            (m-2-1) edge (m-2-2)
            (m-2-2) edge node[auto,swap] { (\ref{eqn:2nd}) } (m-1-2);
    \end{tikzpicture}
\end{equation*}
where the lower horizontal morphism is the direct sum of $p^{mn}$ copies of (\ref{eq:bcmap}).
The upper horizontal morphism exists because the local coordinates of $X$ over $S$ and those of $X'$ over $S'$ are compatible,
and it is clear that this is an isomorphism.

This completes the proof for this special case.
In the general case, we can use the descent argument \cite[pp.344-347]{Berthelot:CCSCP}.
\end{proof}

\subsection{Finiteness.}
\begin{theorem}
    \label{thm:finiteness}
Let $(S,\a,\b,\gamma)$ be an $m$-PD scheme and $(\fa_0,\fb_0,\gamma_0)$ a quasi-coherent $m$-PD sub-ideal of $\a$.
The closed subscheme of $S$ defined by $\fa_0$ is denoted by $S_0$.
Assume that $X$ is a smooth proper scheme over $S_0$ and that $S$ is noetherian.
Let $f$ denote the structure morphism of $X$ over $S$.
In this situation, if $E$ is a locally free $\sO^{(m)}_{X/S}$-module of finite rank,
then $\bR{f^{(m)}_{X/S}}_*E$ is a perfect complex of $\sO_S$-modules.
\end{theorem}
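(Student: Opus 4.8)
The plan is to reduce to the affine noetherian case and then verify the two properties that characterise a perfect complex over a noetherian base: coherence of cohomology and finite Tor-dimension. Since perfectness is local on $S$, I would first assume $S=\mathrm{Spec}\,A$ with $A$ noetherian and set $K:=\bR{f^{(m)}_{X/S}}_\ast E$. By Theorem~\ref{thm:bounded} the complex $K$ is bounded with quasi-coherent cohomology, so it suffices to show that each $H^i(K)$ is a finite $A$-module and that $K$ has finite Tor-dimension over $A$; a bounded complex with coherent cohomology and finite Tor-dimension over a noetherian ring is perfect (this is the criterion underlying the finiteness arguments of \cite{Berthelot-Ogus:NCC}).

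First I would establish coherence. Choosing a finite cover of $X$ by affine opens $U_i$ that lift to smooth $S$-schemes with local coordinates, the \v{C}ech spectral sequence used in the proof of Theorem~\ref{thm:bounded} expresses $K$ through the crystalline cohomology of the pieces $U_{(p)}=U_{i_0}\cap\dots\cap U_{i_p}$; on each such piece Corollary~\ref{thm:poincare}, combined with the lifting equivalences of Proposition~\ref{thm:lift} and Corollary~\ref{cor:cryst_equiv}, identifies the cohomology (up to the factor $p^{mn}$) with the hypercohomology of the higher de~Rham complex $E\otimes\hO^{(m),\bullet}_{X/S}$ of the lift, whose terms are finite free $\sO$-modules by Proposition~\ref{prop:structure}. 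The decisive input is that $X$ is proper over $S_0$, hence over $S$ because $S_0\hookrightarrow S$ is a closed immersion, so that Grothendieck's finiteness theorem for proper morphisms over the noetherian base $S$ promotes the termwise-coherent, locally de~Rham description to coherence of the abutment $H^i(K)$.

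Next I would prove finite Tor-dimension. The cleanest route uses that on each lift the structure morphism to $S$ is smooth, hence flat, so each term $E\otimes\hO^{(m),r}_{X/S}$ is finite free over the local structure sheaf and therefore flat over $\sO_S$; thus the local de~Rham complexes computing $K^{\oplus p^{mn}}$ are bounded complexes of $\sO_S$-flat modules, which forces finite Tor-dimension. One may in principle argue instead through base change: by Theorem~\ref{thm:basechange}, for an $m$-PD morphism $u\colon S'\to S$ one has $\bL u^\ast K\simeq \bR{f'^{(m)}_{X'/S'}}_\ast g\crism^\ast(E)$, and the bound of Theorem~\ref{thm:bounded} is uniform in $S'$ because the relative dimension and the combinatorics of the cover are preserved; however this requires the target of $u$ to carry an $m$-PD structure compatible with $(\fb,\gamma)$, which is why I prefer the flatness argument. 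Either way, coherence together with finite Tor-dimension shows that $K$ is perfect, and since $K$ is a direct summand of $K^{\oplus p^{mn}}$ the harmless factor $p^{mn}$ is removed by the fact that direct summands of perfect complexes are perfect.

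The hard part will be the globalisation in the coherence step. As the paper stresses, the higher de~Rham complex genuinely depends on the choice of coordinates and has no global avatar, so one cannot simply push a single de~Rham complex forward along the proper morphism $f$; the local de~Rham descriptions must instead be glued through the \v{C}ech and spectral-sequence machinery of Theorem~\ref{thm:bounded}, and one must check that the resulting object is computed patch by patch by these coordinate-dependent complexes while remaining the intrinsic crystalline cohomology. A secondary subtlety, already noted above, is that the base change theorem is phrased for $m$-PD morphisms, so converting it into a genuine Tor-amplitude bound is delicate when the fibres one would test against need not admit a compatible $m$-PD structure; the flatness argument is attractive precisely because it sidesteps this difficulty.
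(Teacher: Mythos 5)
There is a genuine gap in your coherence step, and it is exactly at the point you flag as ``the hard part'' without resolving it. The \v{C}ech spectral sequence of Theorem \ref{thm:bounded} expresses $\bR{f^{(m)}_{X/S}}_*E$ through the crystalline cohomology of the affine pieces $U_{i_0}\cap\dots\cap U_{i_p}$; these pieces are affine, not proper over $S$, so the terms $E_1^{p,q}$ are quasi-coherent but emphatically not finite $\sO_S$-modules (already $H^0$ of the structure sheaf of an affine open is huge). Finiteness therefore cannot be propagated through that spectral sequence, and Grothendieck's finiteness theorem cannot be invoked either, because it requires a \emph{global} bounded complex of coherent sheaves on the proper scheme $X$ computing $\bR{u^{(m)}_{X/S}}_*E$ --- and no such complex is produced by your argument: the higher de Rham complex $\hO^{(m),\bullet}_{X/S}$ depends on the choice of coordinates and exists only locally, and the smooth lifts of $X$ over $S$ (on which it lives) also exist only locally. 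Your finite Tor-dimension argument via termwise $\sO_S$-flatness suffers from the same local-to-global defect.

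The paper resolves this differently, and you should compare. It characterises perfection as pseudo-coherence plus finite Tor-dimension, gets the Tor-dimension bound from the base change result (Proposition \ref{thm:wbc}, following Berthelot), and proves pseudo-coherence by a d\'evissage that your proposal omits entirely: first the case $S=S_0$, where the \emph{globally defined} jet complex of Le Stum--Quir\'os provides a complex with finitely generated terms on the proper scheme $X$ (this is the global object your argument lacks; note it is used even though its local freeness is unknown, since pseudo-coherence only needs finite generation); then the case $(\fa_0,\fb_0,\gamma_0)=(\fa,\fb,\gamma)$ by induction along the infinitesimal neighbourhoods $S_n$ defined by $\fa^{n+1}$, using the distinguished triangles attached to $0\to\fa^n/\fa^{n+1}\to\sO_{S_n}\to\sO_{S_{n-1}}\to 0$ together with Proposition \ref{thm:wbc} to identify $\bR{f^{(m)}_{X/S}}_*(E)\Lotimes\fa^n/\fa^{n+1}$ with a complex over $S_0$; and finally the general case by the equivalence of crystal categories. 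The higher de Rham complex enters the finiteness theorem only through the base change theorem, not through a direct coherence argument; your proposal inverts this and thereby runs into the globalisation obstruction that the paper is structured to avoid.
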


\begin{proof}
The assertion is equivalent to showing that the complex of $\sO_S$-modules $\bR{f^{(m)}_{X/S}}_\ast E$ is
pseudo-coherent and locally of finite Tor-dimension \cite[I 5.8.1]{SGA6}
(recall that a complex $K$ of $\sO_S$-modules is pseudo-coherent if and only if it is locally of finite cohomological dimension and the cohomology sheaves $H^i(K)$ are coherent for all $i$ \cite[I p.2]{SGA6}).
In fact, we can deduce from Proposition \ref{thm:wbc} that $\bR{f^{(m)}_{X/S}}_\ast E$ is locally of finite Tor-dimension \cite[V 3.5.9]{Berthelot:CCSCP}.
It therefore suffices to prove the pseudo-coherence of this complex.

First, in case $\fa_0=0$, that is, in case $S=S_0$,
this complex is quasi-isomorphic to a complex whose terms are finitely generated \cite[4.7]{LeStum-Quiros:EPLCCHL}
(as explained in Subsection \ref{ss:problem_on_freeness}, no problem occurs in the use of this result of Le Stum and Quir\'os 
because of the local freeness of $E$).
Hence the pseudo-coherence is obvious.

Next, assume that $(\fa_0,\fb_0,\gamma_0)=(\fa,\fb,\gamma)$.
In this case, the argument \cite[VII 1.1.1]{Berthelot:CCSCP} is used as follows.
For each natural number $n$, let $S_n$ be the closed subscheme defined by $\fa^{n+1}$.
Then, we have the exact sequence
\[
0\to \fa^n/\fa^{n+1}\to \sO_{S_n}\to \sO_{S_{n-1}}\to 0,
\]
which gives the distinguished triangle
\[
\begin{tikzpicture}
    \matrix (m) [matrix of math nodes, row sep=3em, column sep=2.5em, text height=2ex, text depth=0.5ex]
    { 
        & \bR{f_{X/S}^{(m)}}_\ast(E)\Lotimes \sO_{S_{n-1}} & \\
        \bR{f_{X/S}^{(m)}}_\ast(E)\Lotimes \fa^n/\fa^{n+1} & & \bR{f_{X/S}^{(m)}}_\ast(E)\Lotimes \sO_{S_n}. \\
    };
    \draw[->, snake=snake, segment amplitude=.4mm, segment length=2mm, line after snake=1mm]
        (m-1-2) -- (m-2-1);
    \path[->]
        (m-2-1) edge (m-2-3)
        (m-2-3) edge (m-1-2);
\end{tikzpicture}
\]
By Proposition \ref{thm:wbc}, the complex in the left-bottom is isomorphic to
\[
\bR{f_{X/S_0}^{(m)}}_\ast(E)\Lotimes_{\sO_{S_0}}\fa^n/\fa^{n+1},
\]
which is pseudo-coherent by the case where $\fa_0=0$.
Therefore \cite[I 2.5 b)]{SGA6}, noting that the ideal $\a_0$ is nilpotent,
the proof is obtained by induction.

At last, we consider the general case. Then, the category of the $m$-crystals in $\sO^{(m)}_{X/(S,\fa,\fb,\gamma)}$-modules
and that of the $m$-crystals in $\sO^{(m)}_{X/(S,\fa_0,\fb_0,\gamma_0)}$-modules are equivalent;
indeed, the question being local on $X$, we may assume that $X$ is embedded to a smooth scheme $Y$ over $S$
such that $Y\times_SS_0=X$, and then Proposition \ref{thm:crystal} applies.
Thus the theorem is reduced to the previous case by using Proposition \ref{thm:wbc}.
\end{proof}

\section{Other Applications.}

\subsection{K\"unneth formula.}

\begin{theorem}
    \label{thm:Kunneth}
    Let $(S,\fa,\fb,\gamma)$ be an $m$-PD scheme, and $R$ a quasi-compact $S$-scheme.
    Let $f\colon X\to R$ and $g\colon Y\to R$ be two quasi-compact, quasi-separated and smooth morphisms.
    Denote the fiber product $X\times_RY$ by $Z$, the projection $Z\to X$ \resp{$Z\to Y$} by $p$ \resp{by $q$}
    and the morphism $f\circ p=g\circ q$ by $h$;
\[
\begin{tikzpicture}
    \matrix (m) [matrix of math nodes, row sep=2em, column sep=3em, text height=2ex, text depth=0.5ex]
    { 
        Z & Y \\
        X & R & (S,\fa,\fb,\gamma) \\
    };
    \path[->,font=\scriptsize]
    (m-1-1) edge node[auto,swap] { $p$ } (m-2-1)
    (m-1-1) edge node[auto] { $q$ } (m-1-2)
    (m-2-1) edge node[auto,swap] { $f$ } (m-2-2)
    (m-1-2) edge node[auto] { $g$ } (m-2-2)
    (m-2-2) edge (m-2-3);
\end{tikzpicture}
\]
    If $E$ \resp{$F$} is a quasi-coherent and flat $\sO_{X/S}^{(m)}$-module \resp{$\sO_{Y/S}^{(m)}$-module},
    then there exists an isomorphism
    \[
    \begin{tikzpicture}
        \matrix (m) [matrix of math nodes,
                     row sep=3em, column sep=2.5em, text height=1.5ex, text depth=0.25ex]
                     { 
                         \bR{f_{\cris}^{(m)}}_\ast(E)\Lotimes_{\sO_{R/S}^{(m)}}\bR{g_{\cris}^{(m)}}_\ast(F) &
                         \bR{h_{\cris}^{(m)}}_\ast\big({p_{\cris}^{(m)}}^\ast(E)\otimes_{\sO_{Z/S}^{(m)}}{q_{\cris}^{(m)}}^\ast(F)\big). \\
                     };
        \path[->] (m-1-1) edge (m-1-2);
    \end{tikzpicture}
    \]
\end{theorem}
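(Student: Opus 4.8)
The plan is to reduce the assertion to a purely local de Rham computation, to replace crystalline cohomology by the higher de Rham complex via the higher Poincar\'e lemma, and then to invoke the tensor-product (algebraic) K\"unneth formula for complexes. \emph{Construction of the map.} First I would produce the K\"unneth morphism, which is formal and parallels the classical construction: the projections $p$ and $q$ furnish, by adjunction, canonical morphisms out of ${p_{\cris}^{(m)}}^\ast\bR{f_{\cris}^{(m)}}_\ast(E)$ and ${q_{\cris}^{(m)}}^\ast\bR{g_{\cris}^{(m)}}_\ast(F)$, and multiplying these inside $\bR{h_{\cris}^{(m)}}_\ast$ yields the desired map. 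Its source is bounded by Theorem \ref{thm:bounded}, so the derived tensor product is well defined.

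\emph{Localisation and base change.} Since every functor in sight lands over the crystalline topos $(R/S)\crism$, it suffices to test the morphism after evaluation at each $m$-PD thickening $(V,T,J,\delta)$ of $R$ over $S$. Here the standard description of the stalk of a crystalline higher direct image, a form of the base-change theorem (Proposition \ref{thm:wbc}), whose hypotheses are met because $E$ and $F$ are flat, identifies the value of $\bR{f_{\cris}^{(m)}}_\ast(E)$ at $T$ with the absolute crystalline cohomology over the $m$-PD scheme $T$ of the fibre $X_V=X\times_R V$, and similarly for $g$ and $h$. This reduces the theorem to the absolute statement over an $m$-PD base $T$, with $X_V$, $Y_V$ smooth over $V=T_0$ and $Z_V=X_V\times_V Y_V$.

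\emph{Reduction to coordinates and the higher de Rham K\"unneth.} The question being local on the bases, I would next run the Mayer--Vietoris/\v{C}ech spectral sequence as in the proof of Theorem \ref{thm:bounded} to reduce to the case in which $X_V$ and $Y_V$ are affine, have local coordinates, and (using Corollary \ref{cor:cryst_equiv}) lift to smooth schemes over $T$; then $Z_V$ is affine with the concatenated system of coordinates, say $n$ of them on $X_V$ and $n'$ on $Y_V$. Now Corollary \ref{thm:poincare} replaces each crystalline cohomology by a higher de Rham complex, at the cost of the level-$m$ multiplicities $p^{mn}$, $p^{mn'}$ and $p^{m(n+n')}$. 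Because the coordinates of $Z_V$ are obtained by concatenation, Proposition \ref{prop:structure} shows that $\hO^{(m),\bullet}_{Z_V/T}$ is, as a complex, canonically the graded tensor product $\hO^{(m),\bullet}_{X_V/T}\otimes\hO^{(m),\bullet}_{Y_V/T}$: the underlying module is the exterior algebra on the concatenated basis $\{\bar\tau_j^{p^m}\}$, and the differential (\ref{eqn:diff_sent}) splits accordingly. Tensoring the two Poincar\'e isomorphisms and applying the algebraic K\"unneth formula for complexes of flat $\sO$-modules then produces an isomorphism whose two sides are $p^{m(n+n')}$-fold direct sums, the exponents matching since $p^{mn}\cdot p^{mn'}=p^{m(n+n')}$.

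\emph{Main obstacle.} The delicate point, invisible when $m=0$, is the final comparison. One must verify two compatibilities: that the de Rham K\"unneth decomposition $\hO^{(m),\bullet}_{Z_V/T}\cong\hO^{(m),\bullet}_{X_V/T}\otimes\hO^{(m),\bullet}_{Y_V/T}$ is the one induced by $p^\ast$, $q^\ast$, and hence is compatible with the abstractly constructed K\"unneth map; and that it respects the decomposition of index sets $\fB_{n+n'}^{(m)}=\fB_n^{(m)}\times\fB_{n'}^{(m)}$ governing the Poincar\'e lemma. Once both are checked, the $p^{m(n+n')}$-fold isomorphism is diagonal with respect to this indexing, so a single diagonal component gives precisely the K\"unneth morphism over $T$ and shows it to be an isomorphism. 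Correctly tracking these multiplicities is the genuinely new ingredient of the higher-level theory.
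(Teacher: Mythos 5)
Your proposal is correct and follows essentially the same route as the paper: construct the map by adjunction, reduce by base change and localization to the case of affine liftings with local coordinates, and then use the Poincar\'e lemma (Corollary \ref{thm:poincare}) together with the identification $\hO^{(m),\bullet}_{Z/S}\cong p^\ast(\hO^{(m),\bullet}_{X/S})\otimes q^\ast(\hO^{(m),\bullet}_{Y/S})$ coming from concatenating coordinates, matching the multiplicities via $\fB^{(m)}_{n+n'}=\fB^{(m)}_n\times\fB^{(m)}_{n'}$ and $e_{Z,(I,I')}\mapsto e_{X,I}\otimes e_{Y,I'}$. The compatibility checks you flag as the ``main obstacle'' are exactly what the paper's final commutative diagram records.
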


\begin{proof}
The construction of this morphism is straightforward by using the adjunction formula \cite[V 4.1.1]{Berthelot:CCSCP}.
In order to prove that the morphism is an isomorphim, the following proposition will suffice.
\end{proof}

\begin{proposition}
    Under the situation in  Theorem \ref{thm:Kunneth}, let $(\fa_0, \fb_0, \gamma_0)$ be a quasi-coherent $m$-PD sub-ideal of $\fa$,
    and assume that $R$ is the closed subscheme $S_0$ of $S$ defined by $\fa_0$.
    Then, the morphism
    \begin{equation}
        \label{eqn:Kunneth}
    \begin{tikzpicture}
        \matrix (m) [matrix of math nodes,
                     row sep=3em, column sep=2.5em, text height=1.5ex, text depth=0.25ex]
                     { 
                         \bR{f^{(m)}_{X/S}}_\ast(E)\Lotimes_{\sO_S}\bR{g^{(m)}_{X/S}}_\ast(F) &
                         \bR{h^{(m)}_{Z/S}}_\ast\left({p_{\cris}^{(m)}}^\ast(E)\otimes_{\sO_{Z/S}^{(m)}}{q_{\cris}^{(m)}}^\ast(F)\right) \\
                     };
        \path[->] (m-1-1) edge (m-1-2);
    \end{tikzpicture}
    \end{equation}
    is an isomorphism.
\end{proposition}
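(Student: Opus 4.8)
The plan is to reduce the global Künneth isomorphism to the local, coordinatized situation and there to replace every crystalline pushforward by the higher de Rham complex via Corollary \ref{thm:poincare}, so that the statement becomes an elementary fact about tensor products of complexes of $\sO_S$-modules. The argument proceeds in the same spirit as the proof of Proposition \ref{thm:wbc}: one first treats the case where the relevant schemes lift and admit local coordinates, then descends to the general case.

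First I would assume that both $X$ and $Y$ lift to affine smooth $S$-schemes with local coordinates, say of relative dimensions $n$ and $n'$; then $Z=X\times_{S_0}Y$ lifts to the product of these lifts, which is again affine, smooth of relative dimension $n+n'$, and has the concatenated system of local coordinates. Applying Corollary \ref{thm:poincare} to each of the three morphisms gives isomorphisms in the derived category
\begin{equation*}
\left(\bR{f^{(m)}_{X/S}}_\ast(E)\right)^{\oplus p^{mn}}\simeq \bR f_\ast\!\left(E_X\otimes\hO^{(m),\bullet}_{X/S}\right),
\end{equation*}
and analogously for $g$ and for $h$ on $Z$ with the shift $p^{m(n+n')}$. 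Since $E$ and $F$ are quasi-coherent and the schemes are affine, each $\bR f_\ast$ collapses to $f_\ast$. The key computation is then that the tensor product of the two de Rham complexes on $X$ and on $Y$ is the de Rham complex on $Z$: because $\hO^{(m),\bullet}_{X/S}$ is the exterior algebra on $\hO^{(m),1}_{X/S}$ by Proposition \ref{prop:structure}(ii), and the local coordinates of $Z$ are the disjoint union of those of $X$ and $Y$, one has a graded $\sO_Z$-algebra isomorphism $\hO^{(m),\bullet}_{Z/S}\simeq p^\ast\hO^{(m),\bullet}_{X/S}\otimes q^\ast\hO^{(m),\bullet}_{Y/S}$, compatible with the differentials described in Proposition \ref{prop:structure}(iii) since the differential only raises the coordinate-direction wedge factors and acts as a graded derivation. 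Combining this with the flat base change of $\bR f_\ast\otimes\bR g_\ast$ into $\bR h_\ast$ over the affine base, and matching the multiplicities $p^{mn}\cdot p^{mn'}=p^{m(n+n')}$, yields the isomorphism (\ref{eqn:Kunneth}) in this special case.

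For the general case I would use the same descent argument invoked at the end of Proposition \ref{thm:wbc}, namely \cite[pp.344-347]{Berthelot:CCSCP}: choosing finite affine coverings of $X$ and $Y$ by opens admitting coordinate-lifts, the induced covering of $Z$ reduces the statement to the coordinatized case just treated, the \v{C}ech spectral sequences being compatible with the Künneth map by construction. The main obstacle I anticipate is not the descent but the verification that the identification of de Rham complexes is compatible with the Künneth pairing on the crystalline side, that is, that the comparison isomorphisms of Corollary \ref{thm:poincare} intertwine the adjunction-defined Künneth map (\ref{eqn:Kunneth}) with the evident tensor-product map of de Rham complexes. This requires tracking the linearization functor through the product, using Proposition \ref{thm:linearization}(iii) to identify ${p^{(m)}_{\cris}}^\ast(E)\otimes{q^{(m)}_{\cris}}^\ast(F)$ with the linearization of $E_X\otimes F_Y$ on $Z$, and checking that the differentials of the tensor de Rham complex agree with those transported from $Z$; this bookkeeping, rather than any deep new input, is where the care is needed.
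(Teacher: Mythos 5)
Your proposal is correct and follows essentially the same route as the paper: reduce to the case of affine coordinatized lifts (with $Z$ lifted to the product and given the concatenated coordinates), apply Corollary \ref{thm:poincare} to $f$, $g$ and $h$, identify $\hO_{Z/S}^{(m),\bullet}$ with $p^\ast(\hO_{X/S}^{(m),\bullet})\otimes q^\ast(\hO_{Y/S}^{(m),\bullet})$, and match the multiplicities $p^{mn}\cdot p^{mn'}=p^{m(n+n')}$ via a commutative diagram. The compatibility bookkeeping you flag as the delicate point is exactly what the paper's concluding diagram records.
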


\begin{proof}
    We may assume that $X$ \resp{$Y$} is lifted to an affine smooth $S$-scheme $\bar{X}$ \resp{$\bar{Y}$} that has local coordinates.
    Then, $\bar{X}\times_S\bar{Y}$ is a lift of $Z$, which allows us to assume that $\bar{X}=X$, $\bar{Y}=Y$ and $Z=\bar{X}\times_S\bar{Y}$.
    We fix a system of local coordinates $\{t_1,\dots,t_n\}$ \resp{$\{t'_1,\dots,t'_{n'}\}$} of $X$ \resp{$Y$};
    the scheme $Z$ naturally has a system of local coordinates
    \[
    \{p^\ast(t_1),\dots,p^\ast(t_n), q^\ast(t'_1), \dots, q^\ast(t'_{n'})\},
    \]
    which gives us a natural isomorphism
    \[
    \begin{tikzpicture}
        \matrix (m) [matrix of math nodes,
                     row sep=3em, column sep=2.5em, text height=1.5ex, text depth=0.25ex]
                     { 
                     p^\ast(\hO_{X/S}^{(m),\bullet})\otimes q^\ast(\hO_{Y/S}^{(m),\bullet}) &
                     \hO_{Z/S}^{(m),\bullet}. \\
                     };
        \path[->] (m-1-1) edge node[above,inner sep=0.5pt] {$\sim$} (m-1-2);
    \end{tikzpicture}
    \]
    
    The proposition, therefore, follows from the commutative diagram
    \[
    \begin{tikzpicture}
        \matrix (m) [matrix of math nodes,
                     row sep=2em, column sep=0em, text height=1.5ex, text depth=1ex]
                     { 
                     \left(\bR{f_{X/S}^{(m)}}_\ast(E)\Lotimes_{\sO_S}\bR{g_{Y/S}^{(m)}}_\ast(F)\right)^{\oplus p^{m(n+n')}} & \\
                      & 
                     \bR{h_{Z/S}^{(m)}}_\ast\left({p_{\cris}^{(m)}}^\ast(E)\otimes_{\sO_{Z/S}^{(m)}}{q_{\cris}^{(m)}}^\ast(F)\right)^{\oplus p^{m(n+n')}}\\
                     \bR{f_{X/S}^{(m)}}_\ast(E)^{\oplus p^{mn}}\Lotimes_{\sO_S}\bR{g_{Y/S}^{(m)}}_\ast(F)^{\oplus p^{mn'}} & \\
                     &  h_*(p^\ast(E_X)\otimes_{\sO_Z}q^\ast(F_Y)\otimes_{\sO_Z}\hO_{Z/S}^{(m),\bullet}); \\
                     f_\ast(E_X\otimes_{\sO_X}\hO_{X/S}^{(m),\bullet})\otimes_{\sO_S} g_\ast(F_Y\otimes_{\sO_Y}\hO_{Y/S}^{(m),\bullet}) &  \\
                     };
        \draw[->] (m-1-1) edge node[below,inner sep=2pt,sloped] {$\sim$} (m-3-1);
        \path[->] (m-1-1) edge (m-2-2.north);
        \path[->] (m-2-2) edge node[above,swap,inner sep=2pt,sloped] {$\sim$} (m-4-2);
        \path[->] (m-3-1) edge node[below,inner sep=2pt,sloped] {$\sim$} (m-5-1);
        \path[->] (m-5-1) edge node[above,inner sep=2pt,sloped] {$\sim$} (m-4-2.south);
    \end{tikzpicture}
    \]
    here, the morphism from the top-left toward bottom-right is the direct sum of $p^{m(n+n')}$ copies of (\ref{eqn:Kunneth}),
    and that from the top-left toward bottom is induced from
    \begin{eqnarray*}
        \left({f_{\cris}^{(m)}}_\ast(E)\otimes{g_{\cris}^{(m)}}_\ast(F)\right)^{\oplus p^{m(n+n')}} 
        & \to & {f_{\cris}^{(m)}}_\ast(E)^{\oplus p^{mn}}\otimes {g_{\cris}^{(m)}}_\ast(F)^{\oplus p^{mn'}}\\
        e_{Z,(I,I')} & \mapsto & e_{X,I} \otimes e_{Y,I'}.
    \end{eqnarray*}
    where $I\in \fB^{(m)}_n$, $I'\in\fB^{(m)}_{n'}$ and $e_{X,I}$ \resp{$e_{Y,I'}$, $e_{Z, (I,I')}$} is the canonical basis of each direct sum.
\end{proof}

%

\subsection{Frobenius Descent.}
\label{ss:frobenius_descent}

In this subsection, we correct the proof \cite[5]{LeStum-Quiros:EPLCCHL}  of Frobenius descent.

\begin{lemma}
    \label{lem:spencer}
    Let $(S,\fa,\fb,\gamma)$ be an $m$-PD scheme,
    let $X$ be an $n$-dimensional smooth scheme over $S$ that has local coordinates,
    and let $\sF$ be a $\sD_{X/S}^{(m)}$-module.
    Then, there exists a natural isomorphism
    \begin{equation}
    \begin{tikzpicture}
        \matrix (m) [matrix of math nodes,
                     row sep=3em, column sep=2.5em, text height=1.5ex, text depth=0.25ex]
                     { 
                     \sF\otimes_{\sO_X}\hO_{X/S}^{(m),\bullet} &
                     R\sHom_{\sD_{X/S}^{(m)}}\big(\sO_X, \sF\big)^{\oplus p^{mn}}. \\
                     };
        \path[->] (m-1-1) edge node[above,inner sep=0.5pt] {$\sim$} (m-1-2);
    \end{tikzpicture}
    \label{eqn:descent}
    \end{equation}
    \label{lem:descent}
\end{lemma}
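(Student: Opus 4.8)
The plan is to realize the right-hand side through a level-$m$ Spencer resolution and to match it with the higher de Rham complex by means of the formal Poincar\'e lemma of Subsection \ref{ss:formalPL}. First I would use Proposition \ref{thm:crystal} to regard $\sF$ as an $\sO_X$-module with a hyper $m$-PD stratification, equivalently as a $\sD_{X/S}^{(m)}$-module; under this dictionary the complex $\sF\otimes_{\sO_X}\hO_{X/S}^{(m),\bullet}$ is the higher de Rham complex whose differential is induced by a connection $\nabla\colon\sF\to\sF\otimes_{\sO_X}\hO_{X/S}^{(m),1}$ coming from the $\sD_{X/S}^{(m)}$-action, since by Proposition \ref{prop:structure} the intrinsic differential of $\hO_{X/S}^{(m),\bullet}$ vanishes and all of the differential comes from the connection. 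Writing $\mathscr{T}:=(\hO_{X/S}^{(m),1})^{\vee}$, a free $\sO_X$-module of rank $n$ dual to the basis $\{\bar\tau_j^{p^m}\}$, the extension-of-scalars adjunction gives, for each $r$,
\[
\sHom_{\sD_{X/S}^{(m)}}\big(\sD_{X/S}^{(m)}\otimes_{\sO_X}\textstyle\bigwedge^r\mathscr{T},\,\sF\big)=\sHom_{\sO_X}\big(\textstyle\bigwedge^r\mathscr{T},\,\sF\big)=\sF\otimes_{\sO_X}\hO_{X/S}^{(m),r},
\]
so that $\sF\otimes\hO^{(m),\bullet}_{X/S}$ is exactly $\sHom_{\sD_{X/S}^{(m)}}(\mathrm{Sp}^\bullet,\sF)$ for the level-$m$ Spencer complex $\mathrm{Sp}^\bullet:=\sD_{X/S}^{(m)}\otimes_{\sO_X}\bigwedge^{\bullet}\mathscr{T}$.

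The heart of the argument is then to prove that $\mathrm{Sp}^\bullet$ is a resolution of $\sO_X^{\oplus p^{mn}}$ by locally free left $\sD_{X/S}^{(m)}$-modules. I would set this up as the $\sO_X$-linear counterpart of the formal higher Poincar\'e lemma: the linearized complex $L\hO_{X/S}^{(m),\bullet}=\sP_{X/S}^{(m)}\otimes\bigwedge^{\bullet}\hO_{X/S}^{(m),1}$ resolves $\sO_X^{\oplus p^{mn}}$ by Lemma \ref{lem:formal}, and, since $\sD_{X/S}^{(m)}$ is the $\sO_X$-dual of $\sP_{X/S}^{(m)}$ and $\mathscr{T}$ is the dual of $\hO_{X/S}^{(m),1}$, the Spencer complex is the expected dual, with its augmentation onto $\sO_X^{\oplus p^{mn}}$ (note $|\fB_n^{(m)}|=p^{mn}$). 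As the whole discussion is local on $X$ and every term of $L\hO_{X/S}^{(m),\bullet}$ is $\sO_X$-free, I would reduce to $n=1$ by the tensor-product factorization already used in the proof of Lemma \ref{lem:formal}; there the Spencer complex is the two-term complex
\[
0\to \sD_{X/S}^{(m)}\xrightarrow{\ \cdot\,\partial^{\langle p^m\rangle}\ }\sD_{X/S}^{(m)}\to \bigoplus_{0\le i<p^m}\sO_X\to 0,
\]
whose exactness is dual to the one-variable computation of Lemma \ref{lem:formal}, using that $\qbinom{i}{p^m}$ is a unit (Lemma \ref{lem:invert}).

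Finally I would apply $R\sHom_{\sD_{X/S}^{(m)}}(-,\sF)$ to this resolution. Since the terms of $\mathrm{Sp}^\bullet$ are locally free $\sD_{X/S}^{(m)}$-modules, $R\sHom_{\sD_{X/S}^{(m)}}(\sO_X^{\oplus p^{mn}},\sF)$ is computed by $\sHom_{\sD_{X/S}^{(m)}}(\mathrm{Sp}^\bullet,\sF)$, which by the adjunction above is $\sF\otimes\hO_{X/S}^{(m),\bullet}$, while the source is visibly $R\sHom_{\sD_{X/S}^{(m)}}(\sO_X,\sF)^{\oplus p^{mn}}$. This produces the isomorphism \eqref{eqn:descent}, whose naturality in $\sF$ is automatic from the functoriality of $\sHom$.

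I expect the main obstacle to be the second step, i.e.\ proving that $\mathrm{Sp}^\bullet$ is genuinely a $\sD_{X/S}^{(m)}$-linear resolution of $\sO_X^{\oplus p^{mn}}$: the modules $\sP_{X/S}^{(m)}$ and $\sD_{X/S}^{(m)}$ are of infinite rank over $\sO_X$, so the passage from the $\sP$-linear Poincar\'e resolution to the $\sD$-linear Spencer resolution is not a formal dualization and must be carried out by an explicit Koszul computation in $\sD_{X/S}^{(m)}$ reducing to the one-variable case above. A secondary point, easily checked once the resolution is in hand, is that the Spencer differential is carried by the adjunction to the connection differential on $\sF\otimes\hO_{X/S}^{(m),\bullet}$, so that the identification holds at the level of complexes; alternatively one can bypass the explicit differentials by combining the above with Corollary \ref{thm:poincare}, which already identifies $\sF\otimes\hO^{(m),\bullet}_{X/S}$ with $\big(\bR{u_{X/S}^{(m)}}_\ast E\big)^{\oplus p^{mn}}$ for the crystal $E$ attached to $\sF$.
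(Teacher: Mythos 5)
Your proposal is correct and follows essentially the same route as the paper: the Spencer complex $\sD_{X/S}^{(m)}\otimes_{\sO_X}\bigwedge^{\bullet}\mathscr{T}$ you build is canonically the paper's complex $\sHom_{\sO_X}\big(\hO^{(m),\bullet}_{X/S},\sD_{X/S}^{(m)}\big)$ (each $\hO^{(m),r}_{X/S}$ being free of finite rank), which the paper likewise uses as a free $\sD_{X/S}^{(m)}$-resolution of $\sO_X^{\oplus p^{mn}}$ before applying $\sHom_{\sD_{X/S}^{(m)}}(-,\sF)$ and invoking freeness for the biduality. Your explicit reduction to the one-variable exactness of $\cdot\,\partial^{\langle p^m\rangle}$ merely fills in the resolution claim that the paper asserts without detail.
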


\begin{proof}
    Because each term of the higher de Rham complex is free, the complex
    \[
    \sHom_{\sO_X}\big(\hO^{(m),\bullet}_{X/S}, \sD_{X/S}^{(m)}\big)
    \]
    gives a free resolution of $\sO_X^{\oplus p^{mn}}$ as $\sD_{X/S}^{(m)}$-modules.
    Because of the freeness again, the morphism
    \[
    \sF\otimes_{\sO_X}\hO^{(m),\bullet}_{X/S}\to\sHom_{\sD_{X/S}^{(m)}}\left(\sHom_{\sO_X}\big(\hO^{(m),\bullet}_{X/S},\sD_{X/S}^{(m)}\big),\sF\right),
    \]
    is an isomorphism, which completes the proof.
\end{proof}

In the remainder of this subsection, let $(S,\fa,\fb,\gamma)$ be an $m$-PD scheme,
let $(\fa_0,\fb_0,\gamma_0)$ a quasi-coherent $m$-PD sub-ideal of $\fa$ that satisfies $p\in\fa_0$,
and denote by $S_0$ the closed subscheme defined by $\fa_0$.
Let $f\colon X_0\to S_0$ be a smooth morphism such that
the $m$-PD structure $(\fb_0,\gamma_0)$ extends to $\sO_{X_0}$,
and let $F_0\colon X_0\to X_0'$ denote the $s$-th iterate of the relative Frobenius
of $f$ for a natural number $s$.
We denote by $f'\colon X_0'\to S_0$ the natural morphism.

Under this situation, the morphism $F_0$ induces
a morphism of topoi \cite[(4.1)]{Berthelot:LI}
\[
F_0\colon (X_0/S)_{\cris}^{(m+s)}\to (X'_0/S)_{\cris}^{(m)}.
\]
In order to recall the construction of the inverse image functor of $F_0$, 
we fix an object $(U,T,J,\delta)$ of $\Cris^{(m+s)}(X_0/S)$
and set some notation.
Let $T_0$ denote the closed subscheme of $T$ defined by $J+p\sO_T$, and
$T_1$ the closed subscheme of $T_0$ defined by $I^{(p^m)}\sO_{T_0}$,
where $I$ denotes the ideal of $\sO_T$ defined by the closed immersion
$U\hookrightarrow T$.
We denote the $s$-th iterate of the relative Frobenius morphism of $U$ \resp{$T_1$, $T_0$} by
$F_{0,U}\colon U\to U'$ \resp{$F_{0,T_1}\colon T_1\to T'_1$, $F_{0,T_0}\colon T_0\to T'_0$}.
Now, we have a continuous functor
\[
\Cris^{(0)}(T'_1/S)\to\Cris^{(m)}(X'/S)
\]
defined as follows.
For an object $(W, V, \delta_0)$ in $\Cris^{(0)}(T'_1/S)$,
if $I_0$ denotes the ideal of $\sO_V$ defined by the closed immersion $W\hookrightarrow V$,
the image of $(W, V, \delta_0)$ through the functor above is
$(U'\cap W, V, I_0, \delta_0)$;
in fact, $(I_0, \delta_0)$ is the $m$-PD structure of the ideal of
the closed immersion $U'\cap W\hookrightarrow W\hookrightarrow V$
since, because $(I^{(p^m)}\sO_{T_0})^{(p^s)}=0$, we have $T'_0=T'_1$.
We may directly prove that this defines a continuous functor.

Now, for a sheaf $E$ on $\Cris^{(m)}(X'_0/S)$,
the section of $F_0^\ast E$ on $(U,T,J,\delta)$ is described as follows.
If $E_1$ denotes the sheaf on $\Cris^{(0)}(T'_1/S)$ induced by $E$,
and if $E_2$ denotes the image of $E_1$ 
through the inverse image functor of $(F_{0,T_0})^{(0)}_{\cris}$,
we have
\[
(F_0^\ast E)(U,T,J,\delta)=E_2(T_0,T).
\]

If $E$ is an $m$-crystal in $\sO_{X'_0/S}^{(m)}$-modules,
then $F_0^\ast E$ is an $(m+s)$-crystal in $\sO_{X'/S}^{(m+s)}$-modules;
in fact, the question being local, we may assume that
$X_0$ \resp{$X'_0$} can be lifted into a smooth $S$-scheme $X$ \resp{$X'$}
and $F_0$ can be lifted into a morphism $F\colon X\to X'$,
in which case $F_0^\ast E$ is an $(m+s)$-crystal \cite[2.2.3]{Berthelot:DMAII}.

\begin{proposition}
    Let $E'$ be a locally free $\sO_{X_0'/S}^{(m)}$-module of finite rank, and
    denote by $E$ the $\sO_{X_0/S}^{(m+s)}$-module $F_0^\ast E'$,
    which is an $(m+s)$-crystal as noted above.

    Then, the morphism
    \begin{equation}
    F_0^*\colon R{u^{(m)}_{X_0'/S}}_\ast(E')\to R{u_{X_0/S}^{(m+s)}}_\ast(E)
    \label{eqn:frob}
\end{equation}
    is an isomorphism.
    \label{prop:descent}
\end{proposition}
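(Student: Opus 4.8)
The plan is to deduce the statement from Berthelot's Frobenius descent for arithmetic differential operators, using the higher de Rham complex as the bridge. Since the assertion is local on $X_0'$, I would first shrink $X_0'$ so that both $X_0$ and $X_0'$ lift to affine smooth $S$-schemes $X$ and $X'$ carrying compatible systems of local coordinates, with $F_0$ lifting to a morphism $F\colon X\to X'$ satisfying $F^\ast(t_j')=t_j^{p^s}$. By Corollary \ref{cor:cryst_equiv} the $m$-crystal $E'$ on $(X_0'/S)_{\cris}^{(m)}$ then comes from an $m$-crystal on $(X'/S)_{\cris}^{(m)}$, and likewise $E$ comes from an $(m+s)$-crystal on $(X/S)_{\cris}^{(m+s)}$; so by Proposition \ref{thm:crystal} the realizations $E'_{X'}$ and $E_X=F^\ast E'_{X'}$ become, respectively, a $\sD^{(m)}_{X'/S}$-module and a $\sD^{(m+s)}_{X/S}$-module. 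Throughout I would keep in mind that $F_0$ is a universal homeomorphism, so that after applying $\bR u_\ast$ the two complexes in (\ref{eqn:frob}) may be compared on a single Zariski space.

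Next I would rewrite each side as $\sD$-module cohomology. Combining the higher Poincar\'e lemma (Corollary \ref{thm:poincare}) with the Spencer-type isomorphism (Lemma \ref{lem:spencer}) yields, on the level-$m$ side,
\[
\big(\bR{u^{(m)}_{X_0'/S}}_\ast(E')\big)^{\oplus p^{mn}}\simeq E'_{X'}\otimes\hO^{(m),\bullet}_{X'/S}\simeq \bR\sHom_{\sD^{(m)}_{X'/S}}\big(\sO_{X'},E'_{X'}\big)^{\oplus p^{mn}},
\]
and on the level-$(m+s)$ side the same chain with $m$ replaced by $m+s$ and $p^{mn}$ by $p^{(m+s)n}$. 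Since both the Poincar\'e lemma and the Spencer isomorphism manufacture their direct sums out of the \emph{same} index set $\fB^{(m)}_n$, I expect these identifications to be compatible with the $\fB^{(m)}_n$-grading, so that cancelling the matching copies produces single-copy identifications
\[
\bR{u^{(m)}_{X_0'/S}}_\ast(E')\simeq \bR\sHom_{\sD^{(m)}_{X'/S}}\big(\sO_{X'},E'_{X'}\big),\qquad \bR{u^{(m+s)}_{X_0/S}}_\ast(E)\simeq \bR\sHom_{\sD^{(m+s)}_{X/S}}\big(\sO_{X},E_{X}\big),
\]
under which, crucially, the map (\ref{eqn:frob}) should correspond to the pullback morphism along $F$.

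Granting this, I would conclude by invoking Berthelot's Frobenius descent \cite{Berthelot:DMAII}: the functor $F^\ast$ is an equivalence from $\sD^{(m)}_{X'/S}$-modules to $\sD^{(m+s)}_{X/S}$-modules carrying $\sO_{X'}$ to $\sO_X$ and $E'_{X'}$ to $E_X$. An equivalence of abelian categories preserves $\bR\sHom$, so the pullback
\[
\bR\sHom_{\sD^{(m)}_{X'/S}}\big(\sO_{X'},E'_{X'}\big)\to \bR\sHom_{\sD^{(m+s)}_{X/S}}\big(\sO_{X},E_{X}\big)
\]
is an isomorphism, and the identification of the preceding paragraph turns this into the assertion that (\ref{eqn:frob}) is an isomorphism.

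The hardest part will be the compatibility asserted at the end of the second step: that the crystalline functoriality map (\ref{eqn:frob}) really becomes, after the identifications of Corollary \ref{thm:poincare} and Lemma \ref{lem:spencer}, the $\sD$-module pullback along $F$. Establishing this requires checking the naturality of the higher Poincar\'e lemma and of the Spencer resolution with respect to $F$, and matching the $p^{mn}$ copies on one side against the $p^{(m+s)n}$ copies on the other compatibly with $F_0^\ast$ --- the ratio being exactly $p^{sn}=\deg F_0$. A more routine but still delicate point is the topological bookkeeping: one must use that $F_0$ is a universal homeomorphism (and that crystalline cohomology computed through the lifts $X$, $X'$ differs from that of $X_0$, $X_0'$ only by the harmless pushforward along the closed immersions $X_0\hookrightarrow X$, $X_0'\hookrightarrow X'$, by Proposition \ref{thm:lift}) in order to view both sides of (\ref{eqn:frob}) on one space.
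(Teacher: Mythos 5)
Your overall strategy --- reduce to a local lifted situation, translate both sides into $\sD$-module cohomology via the higher Poincar\'e lemma (Corollary \ref{thm:poincare}) and the Spencer-type isomorphism (Lemma \ref{lem:spencer}), and then invoke Berthelot's Frobenius descent for $\sD^{(m)}$-modules --- is exactly the paper's. But the step you yourself flag as ``the hardest part'' is a genuine gap, and it is precisely the step the paper is organized to avoid. You propose to cancel the $p^{mn}$ matching copies in
\[
\big(\bR{u^{(m)}_{X'/S}}_\ast(E')\big)^{\oplus p^{mn}}\simeq E'_{X'}\otimes\hO^{(m),\bullet}_{X'/S}\simeq \bR\sHom_{\sD^{(m)}_{X'/S}}\big(\sO_{X'},E'_{X'}\big)^{\oplus p^{mn}}
\]
to obtain a single-copy identification. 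There is, however, no $\fB^{(m)}_n$-grading on the middle term for the identifications to be compatible with: the complex $E'_{X'}\otimes\hO^{(m),\bullet}_{X'/S}$ is not presented as a direct sum of $p^{mn}$ subcomplexes --- only its two ``resolved'' avatars are --- and the composite isomorphism between the two outer direct sums has no reason to be diagonal; in a derived category one cannot in general cancel direct summands, let alone do so naturally enough to track the map (\ref{eqn:frob}) through the identification. The paper never cancels. Instead it takes the direct sum of $p^{(m+s)n}$ copies of (\ref{eqn:frob}), identifies its source with $\big(E'_{X'}\otimes\hO^{(m),\bullet}_{X'/S}\big)^{\oplus p^{sn}}$ and its target with $E_X\otimes\hO^{(m+s),\bullet}_{X/S}$ via the Poincar\'e lemma, and shows that the resulting map is the explicit morphism $\varphi=\bigoplus_J F_J$ built from $F^\ast(\tau'^{\{I\}})=\tau^{\{p^sI\}}$ and left multiplication by $\tau^J$ (with $F_J=0$ for $J\neq 0$). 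That $\varphi$ is a quasi-isomorphism is then read off from a commutative square whose other three sides are the Spencer isomorphisms (\ref{eqn:descent}) and $p^{(m+s)n}$ copies of Berthelot's quasi-isomorphism $F^\ast\colon \bR\sHom_{\sD^{(m)}_{X'/S}}(\sO_{X'},E'_{X'})\to\bR\sHom_{\sD^{(m+s)}_{X/S}}(\sO_X,E_X)$; the multiplicities match without any cancellation because $p^{sn}\cdot p^{mn}=p^{(m+s)n}$. Finally, a map whose $N$-fold direct sum is a quasi-isomorphism is itself one, which yields (\ref{eqn:frob}).

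A secondary omission: before the local computation one must d\'evisser to the case $p\sO_S=0$ and $\fa=0$, which the paper does via Corollary \ref{cor:cryst_equiv}, Proposition \ref{thm:wbc} and the distinguished triangle from the proof of Theorem \ref{thm:finiteness}. You cannot in general arrange $F^\ast(t_j')=t_j^{p^s}$ on the nose for a lift $F$ over $S$ --- only modulo $p$ --- and the exact relation $F^\ast(\tau'^{\{I\}})=\tau^{\{p^sI\}}$ that drives the comparison map, as well as the vanishing $\tau^J\tau_i^{p^{m+s}}=0$ used to kill the $F_J$ with $J\neq 0$, are characteristic-$p$ statements.
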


Before starting the proof, we show the following lemma.

\begin{lemma}
    Assume that $p\sO_S=0$, that $\fa_0=0$ and let us omit the subscripts ``0''
    in the notation.
    Moreover, assume that $X$ and $X'$ has a system of local coordinates;
    we fix one $\{t_1,\dots,t_n\}$ of $X$ and $\{t_1',\dots,t_n'\}$ of $X'$.

    Then, the morphism $F\colon X'\to X$ induces a quasi-isomorphism
    \[
    \varphi\colon E'_{X'}\otimes\big(\hO_{X'/S}^{(m),\bullet}\big)^{\oplus p^{sn}}\to
    E_X\otimes \hO_{X/S}^{(m+s),\bullet}.
    \]
\end{lemma}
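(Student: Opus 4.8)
The plan is to realise $\varphi$ through the finite flat morphism $F$ and to reduce the whole assertion to an elementary computation with base-$p$ digits. Since the relative Frobenius $F$ is finite (indeed a universal homeomorphism), $F_\ast$ is exact and the structure morphism of $X$ to $S$ factors through $F$; hence $\varphi$ is a quasi-isomorphism if and only if the induced morphism of complexes of $\sO_{X'}$-modules
\[
E'_{X'}\otimes\big(\hO_{X'/S}^{(m),\bullet}\big)^{\oplus p^{sn}}\longrightarrow F_\ast\big(E_X\otimes\hO_{X/S}^{(m+s),\bullet}\big)
\]
is one, and it is this latter morphism that I would actually study. First I would strip the coefficients: since $E=F^\ast E'$ one has $E_X=F^\ast E'_{X'}$, so the projection formula gives $F_\ast(E_X\otimes\hO_{X/S}^{(m+s),\bullet})\cong E'_{X'}\otimes_{\sO_{X'}}F_\ast\hO_{X/S}^{(m+s),\bullet}$, and the decomposition constructed below passes through this tensor factor; the one subtlety, that the differential is twisted by the stratification of $E$, I defer to the end. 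Second, because by Proposition \ref{prop:structure} both higher de Rham complexes are exterior algebras that split as tensor products over the coordinate directions, and because $F^\ast(t_j')=t_j^{p^s}$ is coordinatewise, $F_\ast$ and the splitting are compatible with these tensor decompositions; thus I may reduce, exactly as in the proof of Theorem \ref{thm:Kunneth}, to the one-dimensional case with trivial coefficients.

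For $n=1$ and $E'=\sO_{X'}$, the morphism $F$ exhibits $\sO_X$ as a free $\sO_{X'}$-module with basis $1,t,\dots,t^{p^s-1}$, so that every exponent $k$ is written uniquely as $k=p^sk'+i$ with $0\le i<p^s$, whence $t^k=t'^{k'}t^i$ in $F_\ast\sO_X$. A direct computation from Proposition \ref{thm:Pm} shows that the differential of $\hO_{X/S}^{(m+s),\bullet}$ is the Hasse--Schmidt derivative of order $p^{m+s}$, namely $t^k\mapsto\binom{k}{p^{m+s}}t^{k-p^{m+s}}\,\bar\tau^{p^{m+s}}$, while that of $\hO_{X'/S}^{(m),\bullet}$ sends $t'^{k'}$ to $\binom{k'}{p^m}t'^{k'-p^m}\,\bar\tau'^{p^m}$. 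The key point is the congruence
\[
\binom{p^sk'+i}{p^{m+s}}\equiv\binom{k'}{p^m}\pmod p\qquad(0\le i<p^s),
\]
both sides equalling the $m$-th base-$p$ digit of $k'$ by Lucas' theorem, since adding $i<p^s$ disturbs only the lowest $s$ digits. Because $p\sO_S=0$, this identity shows that the order-$p^{m+s}$ differential preserves the decomposition of $F_\ast\sO_X$ by the residue $i$ of the exponent modulo $p^s$, and acts on the summand indexed by $i$ exactly as the order-$p^m$ differential of $\hO_{X'/S}^{(m),\bullet}$, under the identifications $t'^{k'}t^i\leftrightarrow t'^{k'}$ in the $i$-th copy and $\bar\tau^{p^{m+s}}\leftrightarrow\bar\tau'^{p^m}$ (the latter being $F^\ast(\bar\tau'^{p^m})=\bar\tau^{p^{m+s}}$, which itself follows from $\binom{p^s}{k}\equiv0$ for $0<k<p^s$). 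Hence $F_\ast\hO_{X/S}^{(m+s),\bullet}\cong(\hO_{X'/S}^{(m),\bullet})^{\oplus p^s}$ as complexes, $\varphi$ being this isomorphism; in particular it is a quasi-isomorphism.

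Reassembling, the general $n$ follows by tensoring the one-dimensional isomorphisms over the $n$ coordinate directions, which produces the $p^{sn}$ copies, the congruence being applied in each direction independently. The step I expect to be the main obstacle is the coefficient case: for a nontrivial crystal $E=F^\ast E'$ the differential on $E_X\otimes\hO_{X/S}^{(m+s),\bullet}$ is twisted by the hyper $m$-PD stratification, so one must verify that $F_\ast$ of this twisted differential still respects the residue decomposition and restricts on each summand to the level-$m$ twisted differential of $E'$. I would settle this by the same mechanism: the stratification of $E=F^\ast E'$ is the Frobenius pullback of that of $E'$ (Proposition \ref{thm:crystal} together with \cite[2.2.3]{Berthelot:DMAII}, recalled before the statement), and its components of order $p^{m+s}$ are matched with the order-$p^m$ components of $E'$ by precisely the digit congruence above, so that $\varphi\otimes\mathrm{id}_{E'}$ intertwines the two connections and the trivial-coefficient argument applies verbatim.
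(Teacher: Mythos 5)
Your argument is essentially correct, but it takes a genuinely different route from the paper. The paper does not compute anything directly: it inserts $\varphi$ into a commutative square whose vertical sides are the Spencer-type isomorphisms of Lemma \ref{lem:spencer} identifying $E_X\otimes\hO_{X/S}^{(m+s),\bullet}$ with $R\sHom_{\sD_{X/S}^{(m+s)}}(\sO_X,E)^{\oplus p^{(m+s)n}}$, and whose bottom side is Berthelot's Frobenius descent for arithmetic differential operators \cite[2.3.8 (i)]{Berthelot:DMAII}; the quasi-isomorphy of $\varphi$ is then formal. You instead push forward along the finite homeomorphism $F$ and show the stronger statement that $F_\ast$ of the level-$(m+s)$ complex is \emph{isomorphic as a complex} to $p^{sn}$ copies of the level-$m$ complex, using the monomial basis of $F_\ast\sO_X$ over $\sO_{X'}$ and the Lucas congruence $\binom{p^sk'+i}{p^{m+s}}\equiv\binom{k'}{p^m}$. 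Your identification of the differential of $E_X\otimes\hO_{X/S}^{(m+s),\bullet}$ with the order-$p^{m+s}$ divided-power derivative (ordinary binomial coefficient, not $\qbinom{k}{p^{m+s}}$, which belongs to the \emph{linearized} complex of Proposition \ref{prop:structure}) is correct, and the congruence does what you claim; in the test case $m=0$, $s=1$ this is exactly the computation behind the Cartier isomorphism. What you gain is a self-contained, elementary proof that avoids \cite[2.3.8]{Berthelot:DMAII} entirely; what you lose is that two steps need more care than you give them. First, the monomial computation is literally valid only for affine space; for general $X$ you should either invoke that relative Frobenius and the operators $\partial^{\langle k\rangle}$ commute with the \'etale map $X\to\bA^n$, or argue directly via the Leibniz rule: the cross terms in $\partial^{\langle p^{m+s}\rangle}(F^\ast(f')\,t^I)$ vanish because the Taylor coefficients of $F^\ast f'$ are supported on indices divisible by $p^s$ (this is $F^\ast(\tau'^{\{J\}})=\tau^{\{p^sJ\}}$, i.e.\ \cite[2.2.4]{Berthelot:DMAII}) while $\partial^{\langle b\rangle}(t^I)=0$ for $b\not\leq I$. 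Second, and for the same reason, the coefficient case is \emph{not} settled by the digit congruence, which concerns only the scalar differential; it is settled by this same support argument applied to the pulled-back stratification of $E=F^\ast E'$, whose order-$p^{m+s}$ component is exactly the order-$p^m$ component of $E'$. Since \cite[2.2.4]{Berthelot:DMAII} is already needed by the paper just to define $\varphi$, your proof uses strictly weaker external input once these two points are written out.
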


\begin{proof}
Since Berthelot \cite[2.2.2 (i)]{Berthelot:DMAII} shows that $F$ induces a morphism of simplicial schemes
\[
F^\ast\colon P_{X'/S}^{(m)}(\bullet)\to P_{X/S}^{(m+s)}(\bullet),
\]
we get a morphism
\begin{equation}
F^\ast\colon E'_{X'}\otimes\sP_{X'/S}^{(m),\bullet}\to E_X\otimes\sP_{X/S}^{(m+s),\bullet}.
\label{eqn:frob_P}
\end{equation}
We show that this induces a morphism
\[
F^\ast\colon E'_{X'}\otimes\hO_{X'/S}^{(m),\bullet}\to E_X\otimes\hO_{X/S}^{(m+s),\bullet}.
\]
Put $\tau_i=d^\ast(t_i)$ and $\tau_i'=d^\ast(t_i')$,
and recall that \cite[2.2.4 (i)]{Berthelot:DMAII}, for each $I\in \bN^n\setminus\{0\}$,
this morphism takes $\tau'^{\{I\}}$ to $\tau^{\{p^sI\}}$.
Therefore, (\ref{eqn:frob_P}) induces
\[
E'_{X'}\otimes N\sP^{(m),\bullet}_{X'/S}\to E_X\otimes N\sP_{X/S}^{(m),\bullet}.
\]
Moreover, if $I$ is not equal to $p^m\one_i$ for any $i=1,\dots,n$,
then $p^sI$ is not of the form $p^{m+s}\one_j$,
which shows that the image of $\sK_{X'/S}^{(m)}$ under $F^\ast$
lies in $\sK_{X/S}^{(m+s)}$.

Now, for each $J\in\fB^{(m)}_s$, consider the morphism
\[
F_J\colon E'_{X'}\otimes\hO_{X'/S}^{(m),\bullet}\to E_X\otimes\hO_{X/S}^{(m+s),\bullet}
\]
obtained by multiplying $\tau^{J}$ from the left after $F^\ast$ above.
This morphism is actually a zero morphism if $J\neq 0$ since
$\tau^J\tau_i^{p^{m+s}}=0$ in $\hO_{X/S}^{(m+s),1}$.
We define the morphism $\varphi$ to be the direct sum of $F_J$'s
for $J\in\fB^{(m)}_s$.

The morphism $\varphi$ fits into the commutative diagram
    \[
    \begin{tikzpicture}
        \matrix (m) [matrix of math nodes,
                     row sep=3em, column sep=2.5em, text height=1.5ex, text depth=0.25ex]
                     { 
                     E'_{X'}\otimes\big(\hO_{X'/S}^{(m),\bullet}\big)^{\oplus p^{sn}} &
                     E_X\otimes\hO_{X/S}^{(m+s),\bullet} \\
                     R\sHom_{\sD_{X'/S}^{(m)}}(\sO_{X'}, E')^{\oplus p^{(m+s)n}} &
                     R\sHom_{\sD_{X/S}^{(m+s)}}(\sO_X, E)^{\oplus p^{(m+s)n}}, \\
                     };
        \path[->,font=\scriptsize] (m-1-1) edge node[auto,inner sep=0.5pt] {$\varphi$} (m-1-2)
        (m-1-1) edge node[auto,inner sep=0.5pt]{$\varphi$} (m-1-2)
        (m-1-1) edge node[auto,swap] {(\ref{eqn:descent})${}^{\oplus p^{sn}}$} (m-2-1)
        (m-1-2) edge node[auto] {(\ref{eqn:descent})} (m-2-2)
        (m-2-1) edge node[above,inner sep=2pt] {$\sim$} (m-2-2);
    \end{tikzpicture}
    \]
    where the morphism in the bottom is the direct sum of $p^{(m+s)n}$ copies of
    \[
    F^\ast\colon R\sHom_{\sD_{X'/S}^{(m)}}(\sO_{X'}, E_{X'}) \to R\sHom_{\sD_{X/S}^{(m+s)}}(\sO_X, E_X);
    \]
    this is a quasi-isomorphism by Berthelot \cite[2.3.8 (i)]{Berthelot:DMAII}, which shows the lemma.
\end{proof}

\begin{proof}[Proof of Proposition \ref{prop:descent}]
    First, we reduce the proof to the case where the assumptions
    in the previous lemma holds.
    The question being local, we may assume that
    $X_0$ \resp{$X'_0$} can be lifted into a smooth $S$-scheme $X$ \resp{$X'$}
    and $F_0$ can be lifted into a morphism $F\colon X\to X'$,
    in which case Corollary \ref{cor:cryst_equiv} and Proposition \ref{thm:wbc} allows us
    to assume that $(\fa_0, \fb_0, \gamma_0)=(\fa,\fb,\gamma)$.
    Then, by using the distinguished triangle as in the proof
    of Theorem \ref{thm:finiteness},
    we may assume that $p\sO_S=0$ and $\fa =0$.
    Finally, again since the question is local, we may argue under the
    situation of the previous lemma.

Now, the construction shows that the following diagram commutes:
    \[
    \begin{tikzpicture}
        \matrix (m) [matrix of math nodes,
                     row sep=3em, column sep=2.5em, text height=1.5ex, text depth=0.25ex]
                     { 
                     R{u_{X'/S}^{(m)}}_\ast(E')^{\oplus p^{(m+s)n}} &
                     R{u_{X/S}^{(m+s)}}_\ast(E)^{\oplus p^{(m+s)n}} \\
                     E'_{X'}\otimes\big(\hO_{X'/S}^{(m),\bullet}\big)^{\oplus p^{sn}} &
                     E_X\otimes\hO_{X/S}^{(m+s),\bullet}; \\
                     };
        \path[->,font=\scriptsize]
        (m-1-1) edge (m-1-2)
        (m-2-1) edge node[auto,inner sep=0.5pt] {$\varphi$} (m-2-2)
        (m-1-1) edge node[above,inner sep=2pt,sloped] {$\sim$} (m-2-1)
        (m-1-2) edge node[above,inner sep=2pt,sloped] {$\sim$} (m-2-2);
    \end{tikzpicture}
    \]
    here, the morphism in the top is the direct sum of $p^{(m+s)n}$ copies of the morphism
    (\ref{eqn:frob}).
    This shows the proposition.
\end{proof}

\bibliographystyle{dagaz}
\bibliography{math}

\begin{thebibliography}{99}
\itemsep=0pt
\parskip=0pt
\small

\bibitem[SGA4]{SGA4}
P.~Berthelot, A.~Grothendieck and L.~Illusie,
\newblock ``{\em Th{\'e}orie des Topos et Cohomologie {\'E}tale des
  Sch{\'e}mas},''
\newblock Lecture Notes in Math. {\bf 269{\rm ,} 270{\rm ,} 305},
\newblock Springer-Verlag.

\bibitem[SGA6]{SGA6}
M.~Artin, A.~Grothendieck and J.-L. Verdier,
\newblock ``{\em Th{\'e}orie des Intersections et th{\'e}or{\`e}me de
  {R}iemann--{R}och},''
\newblock Lecture Notes in Math. {\bf 225},
\newblock Springer-Verlag.

\bibitem[B1]{Berthelot:CCSCP}
P.~Berthelot,
\newblock ``{\em Cohomologie Cristalline des Sch{\'e}mas de Caract{\'e}ristique
  $p>0$},''
\newblock Lecture Notes in Math. {\bf 407},
\newblock Springer-Verlag, 1974.

\bibitem[B2]{Berthelot:LI}
P.~Berthelot,
\newblock ``{\em Letter to illusie},''1990.

\bibitem[B3]{Berthelot:DMAI}
P.~Berthelot,
\newblock ``{\em {$\mathscr{D}_{\mathbb{Q}}^{\dag}$}-modules arithm{\'e}tiques.
  {I}. {O}p{\'e}rateurs diff{\'e}rentiels de niveau fini},''
\newblock Ann.\ Sci.\ Ecole Norm.\ Sup. {\bf 29} (1996), 185--272.

\bibitem[B4]{Berthelot:DMAII}
P.~Berthelot,
\newblock ``{\em {$\mathscr{D}$}-modules arithm{\'e}tiques. {II}. {D}escente
  par {F}robenius},''
\newblock Mem.\ Soc.\ Math.\ France {\bf 81} (2000), 1--136.

\bibitem[B5]{Berthelot:LAM}
P.~Berthelot,
\newblock Letter to Abe and the author, 2010.

\bibitem[B-O]{Berthelot-Ogus:NCC}
P.~Berthelot and A.~Ogus,
\newblock ``{\em Notes on Crystalline Cohomology},''
\newblock Princeton University Press, 1978.

\bibitem[LS-Q1]{LeStum-Quiros:TCHL}
B.~Le~Stum and A.~Quir{\'o}s,
\newblock ``{\em Transversal crystal of higher level},''
\newblock Ann.\ Inst.\ Fourier (Grenoble) {\bf 47}(1) (1997), 69--100.

\bibitem[LS-Q2]{LeStum-Quiros:EPLCCHL}
B.~Le~Stum and A.~Quir{\'o}s,
\newblock ``{\em The exact {P}oincar{\'e} lemma in crystalline cohomology of
  higher level},''
\newblock J.\ of Algebra {\bf 240} (2001), 559--588.

\end{thebibliography}

\end{document}